\newcommand{\lb}{\left(}
\newcommand{\rb}{\right)}
\newcommand{\bfA}{{\mathbf A}}
\newcommand{\bfT}{{\mathbf T}}
\newcommand{\bfR}{{\mathbf R}}
\newcommand{\SR}{{\mathcal R}}
\newcommand{\bfD}{{\mathbf D}}
\newcommand{\bfM}{{\mathbf M}}
\newcommand{\bfI}{{\mathbf I}}
\newcommand{\bfX}{{\mathbf X}}
\newcommand{\bfY}{{\mathbf Y}}
\newcommand{\bfu}{{\mathbf u}}
\newcommand{\beao}{\begin{eqnarray*}}
\newcommand{\eeao}{\end{eqnarray*}}
\newcommand{\beam}{\begin{eqnarray}}
\newcommand{\eeam}{\end{eqnarray}}
\newcommand{\barr}{\begin{array}}
\newcommand{\earr}{\end{array}}
\newcommand{\bco}{\begin{corrolary}}
\newcommand{\eco}{\end{corrolary}}
\newcommand{\E}{\mathbb{E}}
\renewcommand{\P}{\mathbb{P}}
\newcommand{\1}{\mathds{1}}
\newcommand{\R}{\mathbb{R}}
\newcommand{\N}{\mathbb{N}}
\newcommand{\C}{\mathbb{C}}
\newcommand{\bfC}{{\mathbf C}}
\newcommand{\bfB}{{\mathbf B}}
\newcommand{\Var}{\operatorname{Var}}
\newcommand{\Levy}{L\'{e}vy }
\newcommand{\x}{{\mathbf x}}
\newcommand{\y}{{\mathbf y}}
\newcommand{\X}{{\mathbf X}}
\newcommand{\Y}{{\mathbf Y}}
\newcommand{\bfZ}{{\mathbf Z}}
\newcommand{\z}{{\mathbf Z}}
\newcommand{\dint}{\,\mathrm{d}}
\newcommand{\twonorm}[1]{\|#1\|}
\newcommand{\vep}{\varepsilon}
\newcommand{\nto}{n \to \infty}
\newcommand{\pto}{p \to \infty}
\newcommand{\lhs}{left-hand side}
\newcommand{\tr}{\operatorname{tr}}
\newcommand{\sign}{\operatorname{sign}}
\newcommand{\diag}{\operatorname{diag}}
\newcommand{\offdiag}{\operatorname{offdiag}}
\newcommand{\MP}{Mar\v cenko--Pastur }
\renewcommand{\Im}{\operatorname{Im}}
\newcommand{\gn}{\gamma_p}
\newcommand{\as}{{\rm a.s.}}
\newcommand{\cip}{\stackrel{\P}{\rightarrow}}
\newcommand{\cas}{\stackrel{\rm a.s.}{\longrightarrow}}
\newtheorem{lemma}{Lemma}[section]
\newtheorem{theorem}[lemma]{Theorem}
\newtheorem{assumption}[lemma]{Assumption}
\newtheorem{example}[lemma]{Example}
\newtheorem{remark}[lemma]{Remark}
\journal{Stochastic Processes and their Applications}
\begin{document}

\begin{frontmatter}



\title{Ties, Tails and Spectra: On Rank-Based Dependency Measures in High Dimensions}


\author[nina]{Nina Dörnemann} 

\affiliation[nina]{organization={Department of Mathematics, Aarhus University},
            addressline={Ny Munkegade 118}, 
            city={Aarhus C},
            postcode={8000}, 
            country={Denmark}}

            \author[michael]{Michael Fleermann}
\affiliation[michael]{organization={School of Life Science, Engineering \& Design, 
Saxion University of Applied Sciences},
             country={The Netherlands}
            }

\author[johannes]{Johannes Heiny}
\affiliation[johannes]{organization={Department of Mathematics,
KTH Royal Institute of Technology},
            addressline={Lindstedtsvägen 25}, 
            city={Stockholm},
            postcode={11428}, 
            country={Sweden}}

\begin{abstract}
This work is concerned with the limiting spectral distribution of rank-based dependency measures in high dimensions. We provide distribution-free results for multivariate empirical versions of Kendall's $\tau$ and Spearman's $\rho$ in a setting where the dimension $p$ grows at most proportionally to the sample size $n$. Throughout, “distribution-free” is used in the sense that the limiting distribution is pivotal with respect to the marginal distributions of the data. Although rank-based measures are known to be well suited for discrete data, previous works in the field focused mostly on the continuous case. We close this gap by imposing mild assumptions and allowing for general types of distributions.
Interestingly, our analysis reveals that a non-trivial adjustment of classical Kendall's $\tau$ is needed to obtain a pivotal limiting distribution in the presence of tied data.  
The proof for Spearman's $\rho$ is facilitated by a result regarding the limiting eigenvalue distribution of a general class of random matrices with rows on the Euclidean unit sphere, which is of independent interest. For instance, this finding can be used to derive the limiting spectral distribution of sample correlation matrices, which, in contrast to most existing works, accommodates heavy-tailed data.
\end{abstract}



\begin{keyword}

Limiting spectral distribution \sep Kendall's tau \sep Sample correlation \sep Spearman's rho \sep High dimension


\MSC[2020] Primary 60B20 \sep Secondary 60F05 \sep 60F10 \sep 60G10 \sep 60G55 \sep 60G70

\end{keyword}

\end{frontmatter}

\section{Introduction}
Statistical inference for dependency measures is a fundamental yet challenging problem, with a variety of applications across many fields. 
In traditional multivariate analysis, the covariance matrix and its normalized counterpart, the correlation matrix, have been widely employed for this purpose. However, the usefulness of such parametric measures often relies on restrictive assumptions, particularly concerning the moments or even the normality of data. For instance, these measures cannot be used to test for independence among uncorrelated yet dependent random variables, and they cannot even be defined when the variance is infinite. Rank-based measures have become a popular tool to overcome such challenges, as they assess the relative positions of data points rather than their actual values. Therefore, rank-based measures are robust against outliers, which is advantageous for heavy-tailed data. Additionally, they can capture nonlinear relationships between variables. Two among the most well-known measures, namely 
Kendall's $\tau$ \cite{Kendall1938} and Spearman's $\rho$ \cite{spearman1987proof} and their natural multivariate extensions are discussed in this work.

Spurred by the wide availability of data sets involving many parameters across diverse disciplines such as biostatistics, wireless communications, and finance \cite{Fan2006, Johnstone2006}, rank-based statistics such as multivariate Kendall's $\tau$ and Spearman's $\rho$ are frequently studied in high-dimensional settings. 
For instance, these non-parametric dependency measures are used for independence testing in high dimensions (see, e.g., \cite{shi2022universally, bastian2022independence, han2017distribution}).
The primary focus of this work lies in investigating the asymptotic spectral distributions of multivariate Spearman's $\rho$, Kendall's $\tau$ and a general class of random matrices including the sample correlation matrix. Since the pioneering works \cite{marchenko:pastur:1967, wigner:1955, wigner:1957}, 
the limiting eigenvalue distribution of various types of random matrices received a lot of interest in the literature: For example, \cite{silverstein1995analysis, fleermann:heiny:2023, fleermann:heiny:2020} on sample covariance matrices, \cite{bryc2006spectral, catalano:fleermann:2024, fleermann:kirsch:2021} on Hankel, Markov, Toeplitz and band matrices, \cite{bose2009limiting} on circulant type matrices, \cite{wang:yao:2016} on auto-covariance matrices, \cite{li:wang:yao:2022} on spatial-sign covariance matrices, \cite{li_et_al_2023} on distance covariance matrices, just to name a few. Such results can be seen as a first meaningful step towards understanding the second-order asymptotics of linear spectral statistics, which facilitates a variety of statistical problems.
Recently, the spectral analysis of non-parametric dependency measures has attracted a lot of interest in random matrix theory. The Tracy--Widom law for the leading eigenvalues of empirical Kendall's $\tau$ and Spearman's $\rho$ matrix are established in \cite{bao2019tracy} and \cite{bao2019tracy_sperman}, respectively. The limiting bulk distribution of Spearman's $\rho$ is investigated in \cite{bai:zhou:2008, wu2022limiting}. For Kendall's $\tau$, we refer to \cite{bandeira:lodhia:rigollet:2017, wang2021eigenvalues, bousseyroux:espana:smerlak:2026}. The linear spectral statistics of Spearman's and Kendall's rank correlation matrix are studied in \cite{bao:lin:pan:zhou:2015} and \cite{li2021central}, respectively. 

Despite the merits of rank-based measures for tied 
data, the question of their spectral properties is largely unresolved. In fact, the aforementioned works concentrate on the case of continuous 
data, and we are not aware of any results regarding the limiting eigenvalue distribution in the setting considered here. Moreover, most of the works in random matrix theory are concerned with the high-dimensional case, where the dimension is of the same magnitude as the sample size, while the moderately high-dimensional regime plays only a minor role. However, in many fields, the wider range of possible growth rates arising in the moderately high-dimensional setting is desirable (see, e.g., \cite{elkaroui:2003}). In this work, we provide a unifying approach for these two asymptotic regimes, with results applicable both for tied and heavy-tailed data. 
Our main contributions can be summarized as follows.

\begin{enumerate}
\item We provide the limiting eigenvalue distributions of Kendall's $\tau$, Spearman's $\rho$ and a general class of random matrices including sample correlation matrices. The paper's main contribution is not merely another limiting spectral distribution result, but to identify which rank-based matrices remain spectrally universal in the presence of ties and heavy tails, and which objects have to be renormalized or adapted.
\item Two standard asymptotic frameworks in random matrix theory are considered, where the dimension $p$ and the sample size $n$ diverge. We provide results both for the case where $p$ is negligible compared to $n$ and where $p$ is asymptotically proportional to $n.$ 
\item We impose mild assumptions on the data-generating distribution, allowing for both heavy-tailed and tied data. 
 \item We show that the Spearman's $\rho$ matrix has a universal limiting spectral distribution if a suitable self-normalization is used. 
\item The classical multivariate Kendall's $\tau$ matrix is not spectrally distribution-free under ties.
We adapt the classical multivariate version of Kendall's $\tau$ and find an appropriate scaling to restore universality. 
\end{enumerate}
The remainder of this paper is structured as follows.  In Section~\ref{sec:pre}, we provide some preliminaries and introduce necessary notations. 
Section~\ref{sec:kendalls:spearman} is devoted to the investigation of Kendall's $\tau$ and Spearman's $\rho$ matrices, whereas Section \ref{sec_unit_sphere} contains the results on random matrices whose rows live on the Euclidean unit sphere. We will also discuss the application to sample correlation matrices. All longer proofs are deferred to Sections \ref{sec_proof_spearman_unit_sphere} and \ref{sec_proof_kendall}.

\section{Eigenvalue distributions of several dependency measures} 
\subsection{Preliminaries} \label{sec:pre} 
In this paper, the sample size $n$ is a function of the dimension $p$ and the dimension increases at most proportionally to the sample size. To be precise, we assume 
\begin{equation*}
n=n_p \to \infty \quad \text{ and } \quad \frac{p}{n_p}\to \gamma\in [0,\infty)\,,\quad \text{ as } \pto\,. 
\end{equation*}
The constant $\gamma$ controls the growth of the dimension relative to the sample size. Our results cover the cases $\gamma=0$ and $\gamma>0$. Throughout this work, let 
\begin{equation}\label{eq:data}
\bfX:= \X_p := (X_{ij})_{i=1,\ldots,p;\, j=1,\ldots,n} \in\R^{p\times n}
\end{equation}
be a random matrix whose rows are independent with i.i.d.\ components while the distributions of the rows might differ.
\medskip

\textbf{Spectral distributions.}
Let $\bfA_p$ be a $p\times p$ (possibly random) matrix with real eigenvalues $\lambda_1(\bfA_p) \geq \ldots \geq \lambda_p (\bfA_p).$ The uniform distribution on its eigenvalues is called {\em empirical spectral distribution} of $\bfA_p$ and is defined by 
\begin{equation*}
F_{\bfA_p}(x)= \frac{1}{p}\; \sum_{i=1}^p \1( \lambda_i(\bfA_p)\le x), \qquad x\in  \R\,.
\end{equation*}
A major problem in random matrix theory is to find the weak limit of $(F_{\bfA_p})$ for suitable sequences $(\bfA_p)$; see for example \cite{bai:silverstein:2010,yao:zheng:bai:2015} for more details. To be precise, we understand the weak convergence of a sequence of probability distributions $(F_{\bfA_p})$ 
to a probability distribution $F$ as $\lim_{\pto} F_{\bfA_p}(x)=F(x)$ almost surely for all continuity points of $F$. This weak limit is called \textit{limiting spectral distribution} of $(\mathbf{A}_p).$ Two types of limiting distributions are ubiquitous in random matrix theory, which we will revisit and explain their appearance in the special case of the sample covariance matrix. 
\medskip

\textbf{\MP distribution.}
The \textit{\MP distribution} $F_{\gamma}$ appears as the limiting spectral distribution of the sample covariance matrix in the regime $\gamma >0 $ (see \cite{bai:silverstein:2010, bai:zhou:2008}).
For $\gamma \in (0,1]$,  $F_\gamma$  has density
\begin{eqnarray}\label{eq:MPch1}
f_\gamma(x) :=
\left\{\begin{array}{ll}
\frac{1}{2\pi x\gamma} \sqrt{(b-x)(x-a)} \,, & \mbox{if } a\le x \le b, \\
 0 \,, & \mbox{otherwise,}
\end{array}\right.
\end{eqnarray}\noindent
where $a=(1-\sqrt{\gamma})^2$ and $b=(1+\sqrt{\gamma})^2$. For $\gamma>1$, the \MP law has an additional point mass $1-1/\gamma$ at $0$.

In the case $\gamma=0$, the limiting spectral distribution of the sample covariance matrix is the Dirac measure at
$1$. After an appropriate transformation of its spectrum, one can obtain a non-degenerate limiting spectral distribution.
\medskip

\textbf{Semicircle law.}
Under certain conditions, the empirical spectral distribution of the rescaled sample covariance matrices
 converges to the so-called \textit{semicircle law} $G$ (see \cite{wang:paul:2014}).
The density of $G$ is given by
\begin{equation}\label{eq:semicircle}
g(x):= \tfrac{1}{2\pi} \sqrt{4-x^2}\, \1_{[-2,2]}(x) \,, \qquad x\in \R\,.
\end{equation}
\begin{remark}{\em 
The semicircle law arises from the \MP\ distribution $F_{\gamma}$ in the following way. Consider a random variable 
$Z_{\gamma}$ with distribution $F_{\gamma}$ for $\gamma\in (0,1]$. The density of $\gamma^{-1/2} (Z_{\gamma} -1)$ is given by
\begin{equation*}
f_{\gamma}(\gamma^{1/2} x+1)\gamma^{1/2}=\frac{1}{2\pi (\gamma^{1/2} x +1)}\sqrt{ (\gamma^{1/2}+2-x)(-\gamma^{1/2}+2+x)} \, \1_{[-2+\gamma^{1/2},2+\gamma^{1/2}]}(x)\,,\quad x\in \R\,.
\end{equation*}
Taking the limit $\gamma \searrow 0$ we see that $f_{\gamma}(\gamma^{1/2} x+1)\gamma^{1/2}$ converges to $g(x)$ in \eqref{eq:semicircle}.
}\end{remark}
  Finally, we need the following definitions. If $\bfC$ is a square matrix, $\diag(\bfC)$ denotes the diagonal matrix which has the same diagonal elements as $\bfC$. Sometimes we will simply refer to $\diag(\bfC)$ as the diagonal of $\bfC$. Analogously, we define $\offdiag(\bfC)=\bfC-\diag(\bfC)$. The matrix $\bfI$ denotes the identity matrix of appropriate dimension.

\subsection{Two rank-based measures} \label{sec:kendalls:spearman}
 In this section, we discuss two rank-based dependency measures, namely multivariate extensions of Kendall's $\tau$ \cite{Kendall1938} and Spearman's $\rho$ \cite{spearman1987proof}. 
  Unlike certain other measures, Kendall's $\tau$ and Spearman's $\rho$ do not require the data to be light-tailed or even continuous, making them useful for analyzing ordinal data. Both measures are robust to the presence of outliers and ties. 

We proceed by writing $Q_{ij}$ for the (fractional) rank of $X_{ij}$ in ascending order among $X_{i1},\ldots, X_{in}$, that is, we set 
\begin{equation*}
 Q_{ij}:=\sum_{t=1}^n \1(X_{it}\le X_{ij})   - \frac{1}{2} \sum_{t=1; \, t\neq j}^n \1(X_{it}=X_{ij})\,.  
\end{equation*}
This means, for example, that the largest of the $X_{ij}$'s has the largest (fractional) rank.  It is worth mentioning that $\sum_{j=1}^n Q_{ij}=n(n+1)/2$ whether or not there are ties among the $X_{ij}$'s.

In order to introduce the natural multivariate extension of Spearman's $\rho$, we define the $p\times n$ matrix $\z$ with entries
\begin{equation}\label{eq:z}
Z_{ij}:=  \frac{Q_{ij}-\frac{n+1}{2}}{\sqrt{\sum_{t=1}^n \big(Q_{it}-\frac{n+1}{2}\big)^2 }}\,, \qquad i=1,\ldots,p; j=1,\ldots,n.
\end{equation}
Then, Spearman's rank correlation matrix is defined by
\begin{equation}\label{eq:defspear}
\SR := \z\z',
\end{equation}
which is nothing other than Pearson's correlation matrix of the ranks $Q_{ij}$, $1 \leq i \leq p, 1 \leq j \leq n.$ 
Note that the denominator in \eqref{eq:z} ensures that the rows of $\z$ lie on the unit sphere in $\R^n$. In contrast to other works \cite{bao2019tracy, li2021central}, we do not rely on continuous distributions and only require a mild assumption, which we refer to as asymptotic non-degeneracy.
This assumption is necessary to avoid the case that $Q_{ij}=\frac{n+1}{2}$ for all $j$. 
\begin{assumption} \label{ass_asympt_non_degen}
The scheme $(X_{i1})_{1\leq i \leq p, p\in\N}$ is \textit{asymptotically uniformly non-degenerate}, that is, we assume that for all (real-valued) triangular schemes $(y_p^{(i)})_{1 \leq i \leq p, p\in\N}$ 
\begin{align*} 
    \limsup_{p\to\infty} \max_{1\leq i \leq p} \P \lb X_{i1} = y_p^{(i)} \rb < 1. 
\end{align*}
\end{assumption}

In the following theorem, we provide a distribution-free result on the eigenvalue distribution of $\SR$. Its proof can be found in Section \ref{sec_proof_rho}. 
\begin{theorem}[Spearman's Rho]
\label{thm:spearman}
Suppose that Assumption \ref{ass_asympt_non_degen} is satisfied. 
\begin{enumerate}
\item[(1)] \label{spearman}
If $p/n\to 0$, then, as $p\to\infty$, the empirical spectral distribution of $\sqrt{n/p} \, (\SR-\bfI)$ converges almost surely to the semicircle law $G$.
\item[(2)] \label{spearman'} 
If $p/n\to \gamma>0$, then, as $p\to\infty$, the empirical spectral distribution of $\SR$ converges almost surely to the \MP law $F_{\gamma}$. 
\end{enumerate}
\end{theorem}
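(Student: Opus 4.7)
The strategy is to reduce Theorem~\ref{thm:spearman} to the general result on random matrices with rows on the Euclidean unit sphere promised in Section~\ref{sec_unit_sphere}. By construction, each row of $\z$ has unit Euclidean norm, so $\SR = \z\z'$ fits directly into that framework. Once the hypotheses of the general theorem are verified, both the \MP limit in the regime $p/n\to\gamma>0$ and the semicircular limit for the rescaled matrix $\sqrt{n/p}\,(\SR-\bfI)$ when $p/n\to 0$ (cf.\ the remark in Section~\ref{sec:pre}) come out of the same theorem.

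The first step is to check that $\z$ is well-defined: Assumption~\ref{ass_asympt_non_degen} rules out the pathological case $Q_{ij}\equiv(n+1)/2$ via a union bound, so the denominator in \eqref{eq:z} is positive with probability tending to one, uniformly in $i$. The second step is to collect the structural features of the rows required by the general theorem. The rows $Z_i$ are independent across $i$, since each depends only on the $i$-th row of $\bfX$. For each $i$, the joint distribution of $(Z_{i1},\ldots,Z_{in})$ is invariant under coordinate permutations because $X_{i1},\ldots,X_{in}$ are i.i.d., and $\sum_j Z_{ij}=0$ while $\sum_j Z_{ij}^2=1$ by definition. The remaining ingredient is a uniform entry bound $\max_{i,j}|Z_{ij}|=o_{\P}(1)$: in the continuous case the entries of each row are a random permutation of $\{(k-(n+1)/2)/c_n : k=1,\ldots,n\}$ with $c_n$ of order $n^{3/2}$, giving $|Z_{ij}|=O(1/\sqrt n)$ deterministically.

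The hard part will be extending this control to the general, possibly tied, setting. With ties, both the fractional ranks $Q_{ij}$ and the random normalization $\sum_t (Q_{it}-(n+1)/2)^2$ depend nontrivially on $\bfX_i$, so neither the numerator nor the denominator in \eqref{eq:z} has the clean deterministic scaling available in the continuous case. The crux is to use Assumption~\ref{ass_asympt_non_degen} to show that no single value is attained by a nonvanishing proportion of the $X_{ij}$'s, which in turn forces the normalization to concentrate around a quantity of order $n^{3/2}$ and rules out any individual rank contributing disproportionately to the row norm. With these uniform estimates in hand, the rows $Z_i$ meet the hypotheses of the general theorem in Section~\ref{sec_unit_sphere}, and the two asserted spectral limits follow.
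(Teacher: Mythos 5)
Your overall strategy is the paper's: each row of $\z$ lies on the unit sphere by construction, and Theorem~\ref{thm:spearman} is obtained by applying Theorem~\ref{thm_unit_sphere}, with permutation invariance of each row supplying the within-row moment identities and $\sum_j Z_{ij}=0$, $\sum_j Z_{ij}^2=1$ giving $\E[Z_{i1}Z_{i2}]=-1/(n(n-1))$, so the cross-moment condition in \eqref{eq_mom_cond_unit_sphere} is immediate. However, there is a genuine gap in how you propose to handle the hard step. First, what must be verified is the fourth-moment condition $\tfrac{n^2}{p^2}\sum_{i}\E[Z_{i1}^4]=o(1)$ of \eqref{eq_mom_cond_unit_sphere}, not a uniform bound $\max_{i,j}|Z_{ij}|=o_{\P}(1)$; in the proportional regime $p\asymp n$ a bound that is merely $o_{\P}(1)$ is far too weak (one needs $\E[Z_{i1}^4]$ of smaller order than $p/n^2$, and the paper in fact proves $\E[Z_{i1}^4]=O(n^{-2})$ uniformly in $i$, combining a high-probability bound with the trivial bound $|Z_{i1}|\le 1$ on an exponentially unlikely complement). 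Second, and more importantly, the mechanism you propose for the tied case is not available: Assumption~\ref{ass_asympt_non_degen} does \emph{not} imply that ``no single value is attained by a nonvanishing proportion of the $X_{ij}$'s''. For Bernoulli data -- precisely the examples emphasized in the paper -- each of the two values is attained by a positive fraction of the sample, yet the theorem holds. So an argument built on excluding macroscopic ties would fail exactly where the result is most interesting.

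What is actually needed, and what you do not supply, is a uniform-in-$i$ lower bound $T_i:=\sum_{t}\big(Q_{it}-\tfrac{n+1}{2}\big)^2\ge c\,n^3$ with exponentially high probability under Assumption~\ref{ass_asympt_non_degen} alone. The paper's route: the assumption yields an $\eta>0$ and a threshold $t_i$ with $\min\big(\P(X_{i1}\le t_i),\P(X_{i1}>t_i)\big)\ge \eta/2$; a monotonicity lemma (Lemma~\ref{lem_rank}, a Jensen-type inequality) shows that merging tied values only decreases $T_i$, which reduces the problem to a two-point configuration, where a direct rank computation gives $T_i\ge \tfrac14 n\,m_{i1}m_{i2}$ with $m_{i1},m_{i2}$ the two group sizes; Hoeffding's inequality then gives $\min(m_{i1},m_{i2})\ge \varepsilon n$ up to an exponentially small exception, whence $\E[Z_{i1}^4]\lesssim n^{-2}$ and \eqref{4thmoment_z} follows. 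Without this reduction-and-concentration argument (or a substitute of comparable strength), your proof of the fourth-moment condition -- and hence of Theorem~\ref{thm:spearman} in the presence of ties -- is incomplete.
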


\begin{remark}
{\em 
One of the strengths of our findings is that they accommodate distributions that allow for ties, which is crucial for the application of Spearman's rho to discrete data.  Nevertheless, it is important to compare our findings with the existing results for the continuous case. If the distribution of $X_{i1}$ has no point mass for $1 \leq i \leq p$, we observe that $\{ Q_{i1}, \ldots, Q_{in} \} = \{ 1, \ldots, n\}$ almost surely, and from \eqref{eq:z} we obtain the representation 
\begin{equation*}
Z_{ij}= \sqrt{ \frac{12}{n(n^2-1)}} \bigg(Q_{ij}-\frac{n+1}{2} \bigg)\,, \qquad i=1,\ldots,p; j=1,\ldots,n,
\end{equation*}
which aligns with the definition of \cite{bao:lin:pan:zhou:2015, bai:zhou:2008}. Thus, in the case of continuous distributions and $\gamma > 0$, we recover the result of \cite[Theorem 2.2]{bai:zhou:2008}.
} 
\end{remark}
Next, we turn to the investigation of the eigenvalue distribution of Kendall's $\tau$, which is based on the number of concordant and discordant pairs of observations in the sample.
For this purpose, let $(A_1,B_1), \ldots, (A_n,B_n)$ be independent observations of a pair $(A,B)$ of real-valued random variables. Then the (empirical) Kendall's $\tau$ is defined as 
\begin{equation*}
\tau=\tfrac{2}{n(n-1)} \sum_{1\le s<t\le n} \sign(A_s-A_t) \sign(B_s-B_t) \,,
\end{equation*}
where $\sign(0)=0$. Clearly, Kendall's $\tau$ takes values in $[-1,1]$.
An equivalent definition is given by
\begin{equation*}
\tau=\tfrac{2}{n(n-1)}  (\#\{ \text{concordant pairs} \} -\#\{ \text{discordant pairs} \})\,.
\end{equation*}
Moreover, Kendall's $\tau$ is a U-statistic and, by standard asymptotic theory, it can be shown that it is asymptotically normal 
\cite{vandervaart:1998}. 
Now, let us turn to higher dimensional observations, such as the columns of the data matrix $\X$. 
Then, the empirical Kendall's $\tau$ of $\bfX$ is defined as the $p\times p$ matrix $\boldsymbol{\tau}=(\tau_{ij})$ with entries
\begin{equation*}
\tau_{ij}:=\tfrac{2}{n(n-1)}  \sum_{1\le s<t\le n} \sign(X_{is}-X_{it}) \sign(X_{js}-X_{jt})\,, \qquad 1\le i,j\le p\,.
\end{equation*}
In this sense, the matrix $\boldsymbol{\tau}$ can be seen as a natural multivariate extension of the coefficient $\tau$. In the context of discrete data, Kendall's $\tau_B$ is more suitable adjusting for ties (for a definition, see \cite{agresti2010analysis}). It aligns with the usual Kendall's $\tau$ in the case that the data follow a continuous distribution.
However, it turns out that neither $\boldsymbol\tau$ nor the multivariate version of $\tau_B$, i.e., the matrix of pairwise $\tau_B$ coefficients, admit a distribution-free LSD (in the presence of ties), and the required scaling is not readily apparent (for more details, see Remark \ref{remark_scaling_D}). For discrete distributions, the $\tau_{ij}$'s will in general have different variances and $\E[\tau_{ii}]=\P(X_{i1}\neq X_{i2})< 1$. This constitutes a major difference to the case of continuous distributions.  Additional insight can be gained from the very recent works \cite{benaych:espana:2026, shevade:bhattacharjee:2026}. Hence, it becomes imperative to find an alternative version of Kendall's $\tau$, which has an eigenvalue distribution in the limit independent of the data distribution. To overcome this issue, we propose the following version of $\boldsymbol{\tau}$, which only depends on the ranks:
\begin{equation*}
\bfT:= \bfD^{-1/2}  \lb \tfrac{2}{n(n-1)} \sum_{1\le s<t\le n} \offdiag \Big(\sign(\mathbf{q}_s-\mathbf{q}_t) (\sign(\mathbf{q}_s-\mathbf{q}_t))'\Big)  \rb \bfD^{-1/2} ,
\end{equation*} 
where $\sign$ of a vector is taken coordinate-wise, $\mathbf{q}_s = (Q_{1s}, \ldots, Q_{ps})'$, and $\bfD$ is a $p\times p$ diagonal matrix with diagonal entries
\begin{align} \label{eq_def_D}
    D_{ii} &:=  \frac{12}{n^3} \sum_{s=1}^n \lb Q_{is} - \frac{n+1}{2} \rb ^2, \qquad 1 \leq i \leq p.
\end{align} 
It is worth noting that $\bfT= \bfD^{-1/2} \offdiag(\boldsymbol{\tau})\,\bfD^{-1/2}$. The definition of $\bfD$ is discussed in Remark \ref{remark_scaling_D}.
The following theorem provides the limiting spectral distribution of $\bfT$, and its proof is deferred to Section \ref{sec_proof_kendall}. 
\begin{theorem}[Kendall's $\tau$]
\label{thm:kendall}
Suppose that Assumption \ref{ass_asympt_non_degen} is satisfied. 
\begin{enumerate}
\item[(1)] 
If $p/n\to 0$, then, as $p\to\infty$, the empirical spectral distribution of 
$ \sqrt{n/p} \,  \bfT$ 
converges in probability to the distribution of $\frac{2}{3}\zeta$, where $\zeta$ follows the semicircle law. 
\item[(2)] 
If $p/n\to \gamma>0$, then, as $p\to\infty$, the empirical  spectral distribution of $\bfT$
converges in probability to the distribution of $\tfrac{2}{3} (\eta-1)$, where $\eta$ follows the \MP law with parameter $\gamma$.
\end{enumerate}
\end{theorem}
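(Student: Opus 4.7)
My plan is to reduce Theorem~\ref{thm:kendall} to Theorem~\ref{thm:spearman} by establishing that $\bfT$ and $\tfrac{2}{3}(\SR-\bfI)$ differ only by a spectrally negligible amount. The underlying heuristic is the Hoeffding projection of the two-sample $U$-statistic $\tau_{ij}$ with kernel $h(x_1,x_2,y_1,y_2)=\sign(x_1-x_2)\sign(y_1-y_2)$: under the (row-wise) independence of $X_{i\cdot}$ and $X_{j\cdot}$ when $i\neq j$,
\begin{equation*}
\tau_{ij} \approx \tfrac{2}{n}\sum_{s=1}^{n} g_i(X_{is})\,g_j(X_{js}), \qquad g_i(x):=\P(X_{i1}<x)-\P(X_{i1}>x).
\end{equation*}
Meanwhile, $V_{is}:=\tfrac{2}{n}(Q_{is}-\tfrac{n+1}{2})=\tfrac{1}{n}\sum_{t}\sign(X_{is}-X_{it})$ concentrates around $\wt V_{is}:=g_i(X_{is})$, and straight from the definitions one obtains the exact identity $\sum_s V_{is}V_{js}=\tfrac{n}{3}\sqrt{D_{ii}D_{jj}}\,\SR_{ij}$ for $i\neq j$. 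Combining the two suggests $\bfT_{ij}=\tau_{ij}/\sqrt{D_{ii}D_{jj}}\approx\tfrac{2}{3}\SR_{ij}$, so that the limit of the empirical spectral distribution of $\bfT$ should be the pushforward of that of $\SR-\bfI$ under $x\mapsto\tfrac{2}{3}x$, matching both stated limits.

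To make this rigorous, the first step is the entrywise $L^2$ estimate $\E[(\tau_{ij}-\tfrac{2}{n}\sum_s V_{is}V_{js})^{2}]=O(n^{-2})$ for $i\neq j$. I would obtain it from the decomposition
\begin{equation*}
\tau_{ij}-\tfrac{2}{n}\textstyle\sum_s V_{is}V_{js}=R^{(1)}_{ij}-\tfrac{2}{n}\bigl(\sum_s U_{is}\wt V_{js}+\sum_s \wt V_{is}U_{js}+\sum_s U_{is}U_{js}\bigr),
\end{equation*}
where $U_{is}:=V_{is}-\wt V_{is}$ and $R^{(1)}_{ij}$ is the degenerate order-$2$ part of the Hoeffding decomposition of $\tau_{ij}$: the classical variance bound for degenerate $U$-statistics gives $\E[(R^{(1)}_{ij})^2]=O(n^{-2})$, while expanding $U_{is}$ into sums of sign functions and computing the relevant covariances $\E[U_{is}U_{is'}]=O(n^{-1})$ and $\E[U_{is}^2]=O(n^{-1})$—both checked directly from the row-wise i.i.d.\ structure of $\bfX$—handles the remaining three terms. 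Summing over $i\neq j$ and using that the diagonal entries $D_{ii}$ are bounded below by a positive constant with high probability, which is secured by Assumption~\ref{ass_asympt_non_degen} since $D_{ii}\approx 3\,\E[g_i(X_{i1})^2]$ and asymptotic non-degeneracy prevents the latter from vanishing uniformly, one obtains
\begin{equation*}
\frac{1}{p}\bigl\|\bfT-\tfrac{2}{3}(\SR-\bfI)\bigr\|_{F}^{2}=O_{\P}(p/n^{2}).
\end{equation*}

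The second step turns this into spectral convergence via Hoffman--Wielandt. When $\gamma>0$ the above right-hand side tends to $0$, so $\tfrac{1}{p}\sum_i|\lambda_i(\bfT)-\tfrac{2}{3}\lambda_i(\SR-\bfI)|^2\to 0$ in probability; when $\gamma=0$, multiplying through by the extra factor $n/p$ yields the analogous bound for $\sqrt{n/p}\,\bfT$ versus $\sqrt{n/p}\cdot\tfrac{2}{3}(\SR-\bfI)$, since $(n/p)\cdot (p/n^2)=1/n\to 0$. A standard Wasserstein-type argument then converts the Frobenius-squared bound into weak convergence of the empirical spectral distributions to a common limit, and applying Theorem~\ref{thm:spearman} to $\SR-\bfI$ produces the stated limits in both regimes.

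The main technical hurdle will be the $L^2$ estimate of the first step, in particular the covariance computation for the quadratic cross term $\tfrac{2}{n}\sum_s U_{is}U_{js}$. With ties allowed, $g_i$ is no longer $2F_i-1$ and the sign-product expansions must track the tie probabilities $\P(X_{i1}=X_{i2})$ explicitly. Performing this bookkeeping uniformly in $i$ under Assumption~\ref{ass_asympt_non_degen} is precisely what restricts the conclusion to convergence in probability, rather than the almost-sure convergence available in Theorem~\ref{thm:spearman}.
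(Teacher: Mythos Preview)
Your approach is correct and in fact somewhat more economical than the paper's. Both proofs rest on the same idea---the H\'ajek/Hoeffding projection of the off-diagonal $\tau_{ij}$ onto the linear space spanned by the $g_i(X_{is})$---together with a Frobenius-norm perturbation argument (Corollary~A.41 in \cite{bai:silverstein:2010}) and the uniform lower bound $\min_i D_{ii}>C$ with high probability under Assumption~\ref{ass_asympt_non_degen}. The difference lies in the target of the comparison. The paper first replaces $\bfD$ by its population analogue $\tilde\bfD$ (built from $H_i,q_i$), then decomposes $\bfM$ at the matrix level to isolate a term $\bfM^{(1,1)}$ whose normalised version $\tilde\bfD^{-1/2}\bfM^{(1,1)}\tilde\bfD^{-1/2}$ is $\tfrac{2}{3}(\bfY\bfY'-\bfI)$ for a unit-sphere matrix $\bfY$ built from the \emph{population} scores $g_i(X_{is})$; it then appeals directly to the unit-sphere Theorem~\ref{thm_unit_sphere}. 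You instead exploit the exact identity $\tfrac{2}{n}\sum_s V_{is}V_{js}=\tfrac{2}{3}\sqrt{D_{ii}D_{jj}}\,\SR_{ij}$ linking the \emph{empirical} rank scores to Spearman's matrix, so no $\bfD\to\tilde\bfD$ replacement is needed and the conclusion follows from Theorem~\ref{thm:spearman} rather than Theorem~\ref{thm_unit_sphere}. Your route is shorter and makes the heuristic $\bfT\approx\tfrac{2}{3}(\SR-\bfI)$ explicit; the paper's route is slightly more self-contained (it does not invoke Theorem~\ref{thm:spearman}) and keeps the matrix-level Hoeffding decomposition visible, which could be reused for other sign-type U-statistics. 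The entrywise $O(n^{-2})$ bound you outline is uniform in $(i,j)$ because the sign kernel is bounded, so the summation to $O_{\P}(p/n^2)$ goes through, and your scaling check $(n/p)\cdot p/n^2=1/n\to 0$ for the $\gamma=0$ case is exactly what is required.
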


\begin{figure}[htbp]
  \centering
  \includegraphics[width=\linewidth]{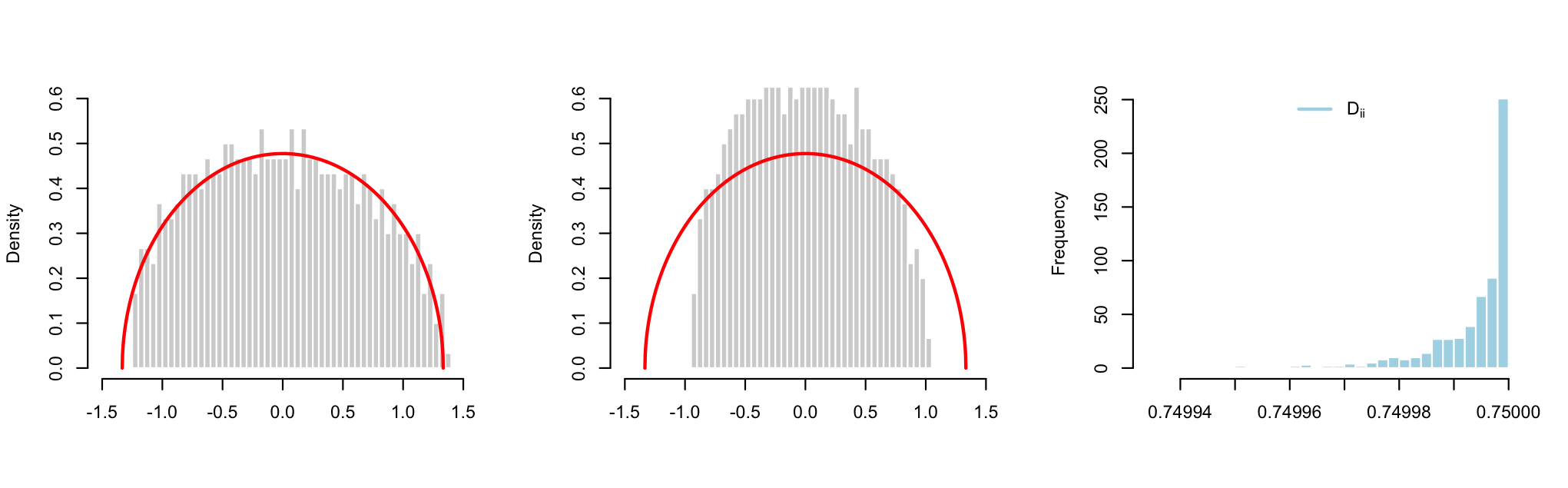}
  \caption{
 Normalized histograms of the simulated eigenvalues of $\sqrt{n/p} \cdot \bfT$ (left panel) and $\sqrt{n/p}\cdot \offdiag(\boldsymbol{\tau})$ (middle panel), and histogram of the diagonal entries of $\bfD$ (right panel), where $(p,n)=(600,120000)$ and $\bfX$ has i.i.d.\ $Ber(0.5)$-entries. The red curve indicates the density of the scaled semicircle law $\frac{2}{3} \zeta$, which is the limiting spectral distribution of $\sqrt{n/p} \cdot \bfT$ for $p/n\to 0$ (part $(1)$ of Theorem~\ref{thm:kendall}). 
 }
\label{fig:kendall_scl}
\end{figure}

The convergence in probability can be extended to almost sure convergence, but for technical reasons it will not be pursued in this work. In Theorem \ref{thm:kendall}, we have proposed an adapted version of Kendall's tau for the high-dimensional regime $p/n\to\gamma>0$ and the moderately high-dimensional regime $p/n\to 0$. For future research, it would be interesting to study the ultra-high-dimensional regime $p/n\to\infty$ using the tools developed in this work. A first step in this direction is the work of \cite{bousseyroux:espana:smerlak:2026}, who consider continuous data in the regime $p=n^2$.

In the following remark, we consider the continuous case and draw a connection to an existing result in the case $\gamma >0.$
\begin{remark} \label{rem_con_kendall}
{\em  
 Again, we compare our findings to the case where the rows of $\bfX$ are generated by continuous distributions. In this case, we have almost surely
 \begin{align*}
     D_{ii} 
     = \frac{12}{n^3} \sum_{s=1}^n \lb Q_{is} - \frac{n+1}{2} \rb ^2
     =\frac{12}{n^3} \lb \sum_{s=1}^n s^2 - 2 \frac{n+1}{2} \sum_{s=1}^n s + n \lb \frac{n+1}{2} \rb^2 \rb  
     = 1 + o(1), \qquad 1 \leq i \leq p,
 \end{align*}
 and 
 \begin{equation*}
     \tfrac{2}{n(n-1)} \sum_{1\le s<t\le n} \diag \Big(\sign(\mathbf{q}_s-\mathbf{q}_t) (\sign(\mathbf{q}_s-\mathbf{q}_t))'\Big)=\bfI\,.
 \end{equation*}
 This implies that $\bfT=  (1+o(1))\boldsymbol{\tau} -\bfI$ and hence the LSDs of $\bfT + \bfI$ and $\boldsymbol{\tau}$ coincide. 
    That is, in the case $p/n\to \gamma \in (0,\infty)$ and for continuous $X_{11}, \ldots, X_{p1}$, we recover the result of Bandeira et al. \cite{bandeira:lodhia:rigollet:2017}, who proved that the empirical spectral distribution of the matrix
    \begin{align*}
    & \tfrac{2}{n(n-1)} \sum_{1\le s<t\le n} \sign(\x_s-\x_t) (\sign(\x_s-\x_t))' \\ 
       = & ~ \tfrac{2}{n(n-1)} \sum_{1\le s<t\le n} \sign(\mathbf{q}_s-\mathbf{q}_t) (\sign(\mathbf{q}_s-\mathbf{q}_t))'   =
        \bfD^{1/2} \bfT \bfD^{1/2} + \bfI
    \end{align*}
 converges weakly in probability to $\tfrac{2}{3} \eta + \tfrac{1}{3}$. 
 Moreover, if the prefactor $\frac{12}{n^3}$ in the definition of $D_{ii}$ were replaced by $\frac{12}{n(n^2-1)}$, the $o(1)$ term would disappear. Asymptotically this does not make any difference and all statements of Theorem~\ref{thm:kendall} would remain valid.
}
\end{remark}

\begin{figure}[htb!]
  \centering
  \includegraphics[width=\linewidth]{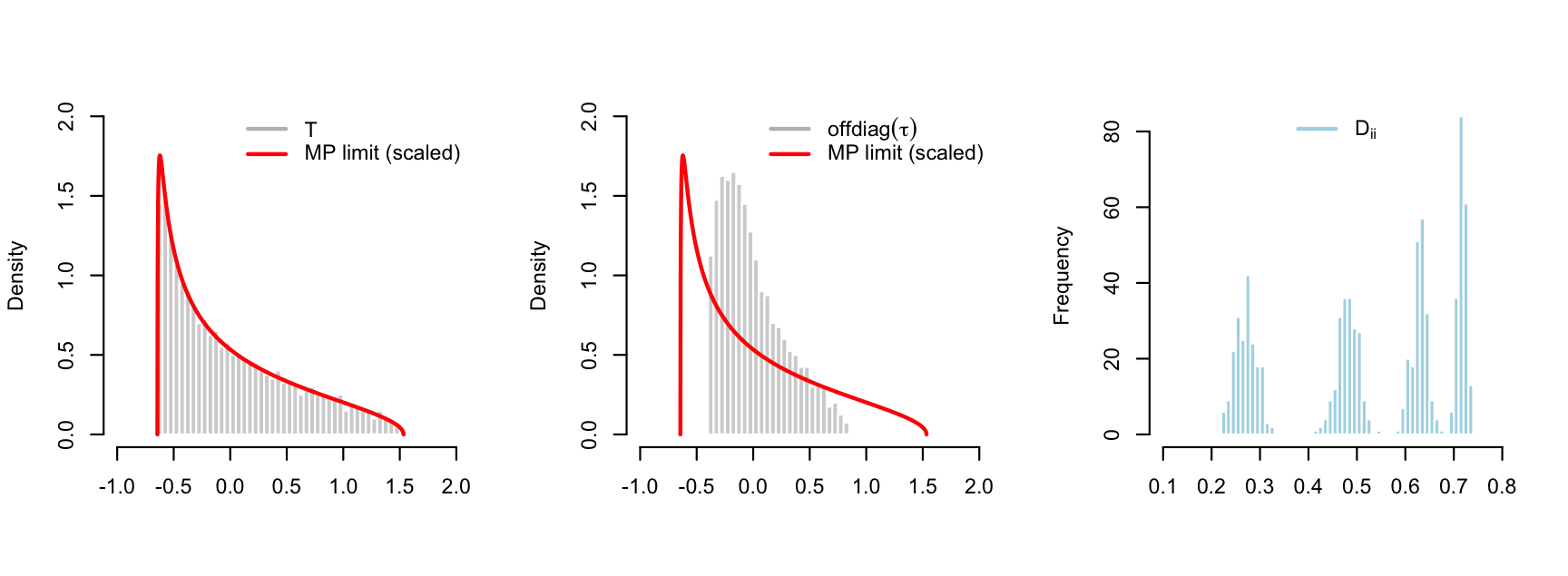}
  \caption{Normalized histograms of the simulated eigenvalues of $\bfT$ (left panel), $\offdiag(\boldsymbol{\tau})$ (middle panel) and of the diagonal entries of $\bfD$ (right panel), where $(p,n)=(800,1200)$ and every fourth row of $\bfX$ has entries distributed  i.i.d.\ $Ber(0.1),Ber(0.4),Ber(0.7),Ber(0.8)$, respectively. The red curve indicates the density of $\tfrac{2}{3} (\eta-1)$, where $\eta$ follows the \MP law with parameter $\gamma=800/1200$, which is the limiting spectral distribution of $\bfT$ for $p/n\to \gamma$ (part $(2)$ of Theorem~\ref{thm:kendall}).}
  \label{fig:kendall_mp}
\end{figure}

\begin{figure}[htb!]
  \centering
  \includegraphics[trim=2cm 3cm 2cm 3cm,
  clip,width=\linewidth]{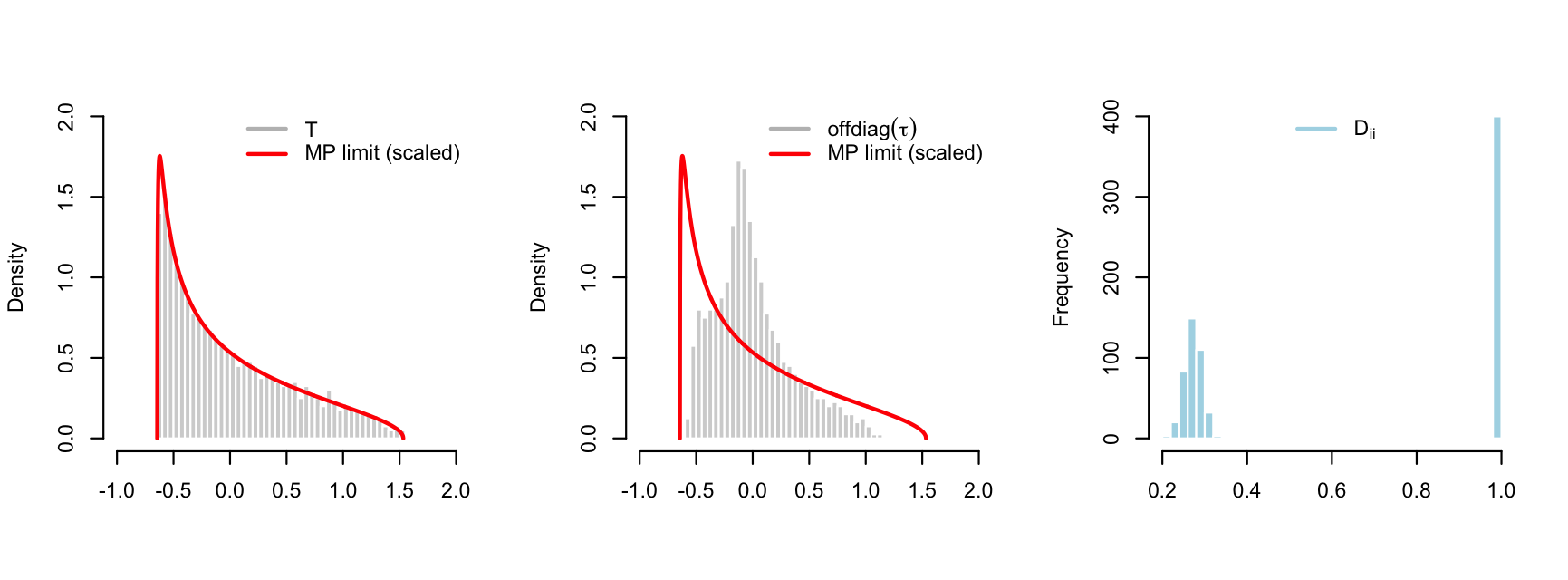}
   \includegraphics[trim=2cm 3cm 2cm 3cm,
  clip,width=\linewidth]{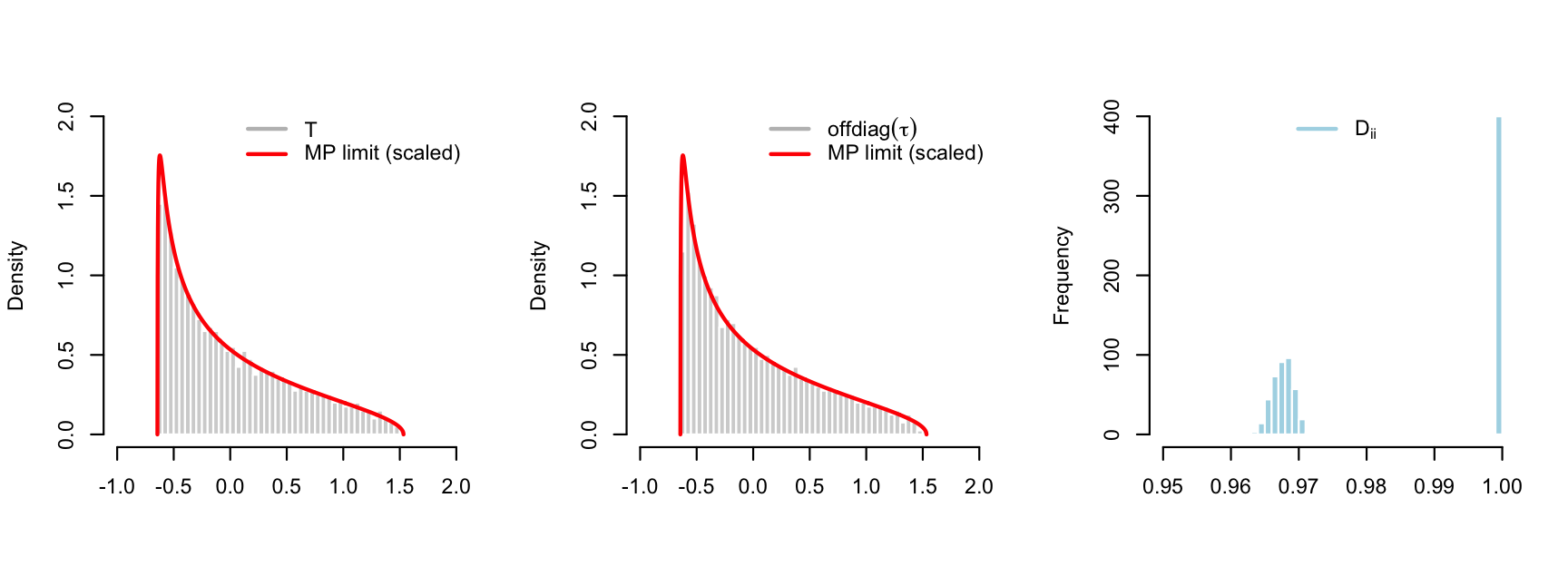}
  \caption{Normalized histograms of the simulated eigenvalues of $\bfT$ (left panels), $\offdiag(\boldsymbol{\tau})$ (middle panels) and the diagonal entries of $\bfD$ (right panels), where $(p,n)=(800,1200)$. In the top row, every second row of $\bfX$ has entries distributed $\operatorname{Poi}(3)$ and $\operatorname{Exp}(1)$, respectively. In the bottom row, every second row of $\bfX$ has entries as distributed $\operatorname{Ber}(0.1)$ and $\operatorname{Exp}(1)$, respectively.  The red curve indicates the density of $\tfrac{2}{3} (\eta-1)$, where $\eta$ follows the \MP law with parameter $\gamma=800/1200$, which is the limiting spectral distribution of $\bfT$ for $p/n\to \gamma$ (part $(2)$ of Theorem~\ref{thm:kendall}). }
  \label{fig:kendallDiscCont}
\end{figure}

In the following remark, we comment on the role of the scaling matrix $\bfD$.
Figure~\ref{fig:kendall_scl}, Figure~\ref{fig:kendall_mp} and Figure~\ref{fig:kendallDiscCont} show the effect of the correct normalization of $\offdiag(\boldsymbol\tau)$, using the diagonal matrix $\bfD$.  

\begin{remark}\label{remark_scaling_D}{\em
    The scaling based on $\bfD$ is crucial for obtaining a pivotal limiting spectral distribution in the presence of ties. Here, we use the term \textit{pivotal} or \textit{universal} distribution to refer to a distribution which does not depend on the initial distribution of the entries of $\bfX.$ Hence, we call the asymptotic results obtained in this paper \textit{distribution-free.} 
    
    To see this, we concentrate on the case of a data matrix $\bfX$ consisting of i.i.d.\ entries  with generic element $X$ that follows a discrete distribution. Denote its c.d.f. and p.m.f. as $H$ and $p_X$, respectively, that is, for $x\in\R$, we define
    \begin{align*}
        H(x) := \P (X \leq x) \quad \text{ and } \quad p_X(x) := \P(X=x). 
    \end{align*}
   For convenience, we concentrate on the case $\gamma=0$. From the proof of Theorem \ref{thm:kendall}, it can be deduced that the LSD of $\sqrt{n/p} \,  \offdiag (\boldsymbol{\tau} )$ is 
   \begin{align} \label{lsd}
       \left[ 3\, \Var ( 2 H(X) - p_X(X)) \right] \frac{2}{3} \zeta .
   \end{align}
Note that the LSD in \eqref{lsd} is pivotal if we restrict ourselves to continuous distributions for $X$ (see Remark \ref{rem_con_kendall}). In this case we have $2 H(X) - p_X(X)\sim U(0,2)$, so that $\Var ( 2 H(X) - p_X(X))=1/3$ does not depend on the specific distribution of $X$ provided that it is continuous.
    To explain the effect of $\Var ( 2 H(X) - p_X(X))$ in \eqref{lsd} for discrete distributions, consider for instance an i.i.d.\ data matrix $\bfX$ with generic element $X\sim\operatorname{Ber}(m), m\in(0,1)$. 
    Then, it is straightforward to verify that $\Var ( 2 H(X) - p_X(X))=\Var(1-m+X) = m(1-m)$. Thus, \eqref{lsd} is not universal, whereas the limiting spectral distribution of $\sqrt{n/p}\, \bfT$ is universal.  In fact, \eqref{lsd} indicates that if all rows possess the same distribution, then the $D_{ii}$'s converge to $3\Var ( 2 H(X) - p_X(X))$ which is $3/4$ in the $\operatorname{Ber}(1/2)$ case; see the right panel of Figure~\ref{fig:kendall_scl} for an illustration. Similar observations hold in the case $\gamma >0$, but for the sake of brevity, we omit a full discussion here.

However, it is noteworthy that the effect of $\bfD$ is in general not close to a simple scaling. This can be observed in Figure~\ref{fig:kendall_mp}. Here, the spectra are not related by scaling.  This is also clearly visible in the top row of Figure~\ref{fig:kendallDiscCont}, in which we simulate ensembles containing both discrete ($\operatorname{Ber}(0.1)$) and continuous data ($\operatorname{Exp(1)}$). The bottom row in 
Figure~\ref{fig:kendallDiscCont} illustrates the effect of replacing the $\operatorname{Ber}(0.1)$ rows by $\operatorname{Poi}(3)$ rows, while keeping the $\operatorname{Exp(1)}$ rows unchanged. Since the Poisson distribution generates substantially fewer ties, the empirical spectral distribution of $\bfT$ becomes much closer to the classical limit associated with continuous entries. However, the spectral distribution of $\offdiag(\boldsymbol{\tau})$ remains different from the fully continuous case, as illustrated by the histogram of $\mathbf{D}$ in the bottom-right panel of the figure.
} \end{remark}

\subsection{Rows on the Euclidean unit sphere} \label{sec_unit_sphere}
In this subsection, we study the spectral distribution of general random matrices whose rows live on the Euclidean unit sphere and satisfy mild moment conditions. This will be discussed in detail in Remark \ref{rem_mom_cond} which shows that these moment conditions are sharp. The result presented in this subsection facilitates the proof of Theorem \ref{thm:spearman}. 
Moreover, our study continues the line of research on the eigenvalue distribution of the sample correlation, and this example will be discussed further after the formulation of the main theorem in this subsection. 

\begin{theorem}[Unit sphere] \label{thm_unit_sphere}
Let $\bfY$ be a $p\times n$ random matrix with independent rows $\mathbf{y}_k=(Y_{k1},\ldots, Y_{kn})$, $1\leq k \leq p$. For all $1 \leq k \leq p$, suppose that $\mathbf{y}_k$ is on the Euclidean unit sphere, and that $\E[Y_{ki}^2]=\E[Y_{kj}^2]$, $\E[Y_{ki}^4]=\E[Y_{kj}^4]$, $\E[Y_{ki}Y_{kj}]= \E[Y_{k1}Y_{k2}]$ and $\E[Y_{ki}^2 Y_{kj}^2]= \E[Y_{k1}^2 Y_{k2}^2]$  for $1\le i<j\le n$. Further, we impose
\begin{equation} \label{eq_mom_cond_unit_sphere}
\frac{n^2}{p^2} \sum_{i=1}^p \E \big[ Y_{i1}^4 \big] = o(1) \quad \text{and} \quad \frac{n^2}{p^{3/2}} \sum_{i=1}^p |\E\big[ Y_{i1}Y_{i2}\big]|= o(1)\,,\qquad \pto\,.
\end{equation}
\begin{enumerate}
    \item[(1)]
    Assume $p/n\to 0$ as $p\to\infty$. Then almost surely, the limiting spectral distribution of $\sqrt{n/p} \, (\bfY \bfY'-\bfI)$ is the semicircle law $G$.
\item[(2)]
    Assume $p/n\to \gamma >0 $ as $p\to\infty$. 
Then almost surely, the limiting spectral distribution of $\bfY\bfY'$ is the \MP distribution $F_{\gamma}$.
\end{enumerate}
\end{theorem}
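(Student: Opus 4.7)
The plan is to prove both parts via the method of moments, which suits the setting of heterogeneous rows with only mild moment assumptions. Since both $F_\gamma$ and the semicircle law $G$ are uniquely determined by their moments, it suffices to establish, for every fixed $k\in\mathbb{N}$, convergence of $\mathbb{E}[p^{-1}\operatorname{tr}(\mathbf{Y}\mathbf{Y}')^k]$ in part (2), and of $\mathbb{E}[p^{-1}(n/p)^{k/2}\operatorname{tr}(\mathbf{Y}\mathbf{Y}'-\mathbf{I})^k]$ in part (1), to the corresponding moment of the target law, together with a polynomial variance bound sufficient to upgrade convergence in probability to almost sure convergence via Borel--Cantelli.

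The starting point is the combinatorial expansion
\begin{equation*}
\operatorname{tr}(\mathbf{Y}\mathbf{Y}')^k = \sum_{i_1,\ldots,i_k=1}^{p}\;\sum_{t_1,\ldots,t_k=1}^{n} Y_{i_1 t_1}Y_{i_2 t_1}Y_{i_2 t_2}Y_{i_3 t_2}\cdots Y_{i_k t_k}Y_{i_1 t_k},
\end{equation*}
which I read as a sum over closed walks on the bipartite graph with row-vertex set $\{1,\ldots,p\}$ and column-vertex set $\{1,\ldots,n\}$. Row independence factorises the expectation of each term over the distinct row indices, and by the within-row symmetry assumed in the theorem, the factor attached to a given row depends only on the partition of its column-visits. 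The sphere constraint $\|\mathbf{y}_k\|=1$ together with $\mathbb{E}[Y_{ki}^2]=\mathbb{E}[Y_{kj}^2]$ forces $\mathbb{E}[Y_{ki}^2]=1/n$, which is exactly the Wishart-type scaling. The ``tree-like'' walks --- those in which every row vertex is visited exactly twice at two distinct columns and the underlying walk graph is a bipartite tree --- therefore reproduce the standard Narayana/Catalan counts, and recover the MP moments in part (2); part (1) can be obtained either by repeating the enumeration with the $\sqrt{n/p}$ rescaling and the centring by $\mathbf{I}$, or by passing to the limit $\gamma\searrow 0$ in the remark following \eqref{eq:semicircle}.

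The delicate point is to discard all non-tree contributions, and this is where hypothesis \eqref{eq_mom_cond_unit_sphere} enters. Walks with a row vertex of degree $\geq 4$ produce expectations involving $\mathbb{E}[Y_{i1}^4]$ (higher mixed moments being bounded by Cauchy--Schwarz), and are killed after summing over row labels by the first condition $(n^2/p^2)\sum_i \mathbb{E}[Y_{i1}^4]=o(1)$. Walks in which a degree-two row vertex has its two edges either sharing a column (yielding a factor $\mathbb{E}[Y_{i1}^2 Y_{i2}^2]$, again controlled by the fourth moment) or forming a non-trivial within-row column pairing (yielding an $\mathbb{E}[Y_{i1}Y_{i2}]$ factor) are killed by the second condition $(n^2/p^{3/2})\sum_i |\mathbb{E}[Y_{i1}Y_{i2}]|=o(1)$, once the combinatorial gain from the degenerate walk is taken into account. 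An analogous classification for pairs of walks yields the required variance bound of order $p^{-2}$, and hence almost sure convergence.

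The main obstacle is precisely this combinatorial bookkeeping. Unlike in the classical i.i.d.\ Wishart case, the within-row dependence forced by the sphere constraint creates many ``almost-tree'' walks whose raw expectations are not automatically of lower order, and each such configuration must be matched against exactly one of the two moment-loss mechanisms in \eqref{eq_mom_cond_unit_sphere}. Checking that this matching balances, uniformly in $k$, is the technical heart of the argument.
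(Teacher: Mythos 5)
Your route (method of moments with a walk/graph expansion) is genuinely different from the paper's, which works with the Stieltjes transform: concentration of $s_{\gn^{-1/2}(\bfY\bfY'-\bfI)}(z)$ via a bounded martingale-difference decomposition and Azuma's inequality, and identification of the expected transform through the Bai--Silverstein resolvent equation, where the two conditions in \eqref{eq_mom_cond_unit_sphere} enter through bounds on $\E[\vep_k]$ and $\E|\vep_k|^2$. A moment-method proof is not impossible here, but as written your proposal has real gaps. The central one is that the combinatorial matching of all defective walks to the two loss mechanisms in \eqref{eq_mom_cond_unit_sphere} --- which you yourself call the technical heart --- is not carried out, and it is not routine under these hypotheses. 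In particular, your organizing claim that ``the factor attached to a given row depends only on the partition of its column-visits'' overstates the assumptions: the theorem fixes only four specific moment identities (second, fourth, and the two pair moments $\E[Y_{ki}Y_{kj}]$, $\E[Y_{ki}^2Y_{kj}^2]$), and says nothing about quantities such as $\E[Y_{i1}^2Y_{i2}^2Y_{i3}^2]$, $\E[Y_{i1}Y_{i2}Y_{i3}Y_{i4}]$ or $\E[Y_{i1}^3Y_{i2}]$, which already appear at $k=3$ in the leading tree terms (row vertices of tree-degree $\ge 3$) and in the error terms (rows visited twice at four distinct columns). The leading terms can likely be recovered only in aggregate, using the sphere identity $\sum_t Y_{it}^2=1$ to show $\sum_{\text{distinct }t_1,\dots,t_d}\E[\prod_l Y_{it_l}^2]=1-O(k^2\, n\,\E[Y_{i1}^4])$, and the odd cross terms only via Cauchy--Schwarz; whether these bounds, combined with \eqref{eq_mom_cond_unit_sphere} (which gives no rate, only $o(1)$), close for every fixed $k$ --- especially in part (1), where the divergent rescaling $\sqrt{n/p}$ makes the error budget much tighter --- is precisely what must be verified and is nowhere checked.

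Two further specific problems. First, the fallback of obtaining part (1) ``by passing to the limit $\gamma\searrow 0$'' in the remark following \eqref{eq:semicircle} is not a proof: that remark is a heuristic relating the two limit laws, and the regime $p/n\to 0$ with the rescaling $\sqrt{n/p}\,(\bfY\bfY'-\bfI)$ is a separate asymptotic statement that cannot be deduced from the fixed-$\gamma$ \MP limit; you must redo the enumeration with the rescaling, which is again unverified bookkeeping. Second, the almost-sure convergence rests on an asserted $O(p^{-2})$ variance (or fourth-moment) bound for each trace moment, obtained from ``an analogous classification for pairs of walks''; with the within-row dependence created by the sphere constraint this is a genuine piece of work, whereas the paper gets exponential concentration essentially for free from Azuma's inequality applied to the Stieltjes transform. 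In short, the strategy is coherent and could in principle be made to work, but the proposal defers exactly the steps where the stated hypotheses are delicate, and one of its two suggested shortcuts for part (1) is invalid.
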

One prominent application of Theorem \ref{thm_unit_sphere} regards the spectral analysis of the (uncentered) sample correlation matrix $\bfR :=\{\diag(\X\X')\}^{-1/2}\, \X\X'\{\diag(\X\X')\}^{-1/2}$, where the data matrix $\X$ is as defined in \eqref{eq:data} with non-degenerate row distributions.
The $(i,j)$-entry of $\bfR$ is given by 
\begin{equation}\label{eq:corrR}
R_{ij}= 
 \sum_{t=1}^n Y_{it}Y_{jt}\,, \qquad i,j=1,\ldots,p\,,
\end{equation} 
where
\begin{align} \label{eq_def_y_corr}
Y_{it}:=X_{it}/\sqrt{X_{i1}^2+\cdots + X_{in}^2 }\,, \qquad 
i=1,\ldots,p;\; t=1,\ldots,n\,.
\end{align}
Thus, the matrix $\Y=(Y_{ij})_{p\times n}$ has independent rows on the Euclidean unit sphere.

If the true means $\E[X_{i1}]$ of the data generating process are zero (or known), one can work with the above definition of $\bfR$.  However, in general, one needs to estimate the population mean vector. To this end, one typically considers the centered sample correlation matrix
\begin{equation*}
  \bfR_c:= \{\diag((\X - \bar{\x})(\X - \bar{\x})')\}^{-1/2} \,(\X - \bar{\x})(\X - \bar{\x})' \, \{\diag((\X - \bar{\x})(\X - \bar{\x})')\}^{-1/2}\,,
\end{equation*}
where $\bar{\x}=n^{-1}\X\mathbf{1}$ is the sample mean
and $\mathbf{1}=(1,\ldots,1)'$ denotes the $n$-dimensional vector of ones.
By definition, we have $\bfR_c=\widetilde{\bfY}\widetilde{\bfY}'$, where the matrix $\widetilde{\bfY}=(\widetilde{Y}_{ij})_{p\times n}$ has entries 
\begin{equation}\label{eq:defytilde}
\widetilde{Y}_{ij}:=\widetilde{X}_{ij}/\sqrt{\widetilde{X}_{i1}^2+\cdots \widetilde{X}_{in}^2 } \quad \text{ with } \widetilde{X}_{ij}:=X_{ij}-\frac{1}{n} \sum_{t=1}^n X_{it}\,,
\end{equation}
and independent rows on the Euclidean unit sphere.

 In the case $\E[X_{i1}^4]<\infty$, many results about sample correlation matrices can reduced to the covariance case via a comparison of their spectra (see \cite{heiny:2022}). However, this approximation technique breaks down if the fourth moment is no longer finite. 
 The literature of exploring the limiting eigenvalue distribution beyond the common fourth moment condition is rather scarce. Recent works in this direction are \cite{doernemann:heiny:2025, heiny:parolya:2024, heiny:yao:2020}, where the case $\gamma >0$ is treated under mild moment assumptions. Theorem \ref{thm_unit_sphere} applied to $\bfR$ and $\bfR_c$ continues this line of literature and provides the limiting spectral distribution under a moment assumption, which is less restrictive than the common fourth moment condition $\E [ X_{i1}^4 ] <\infty$, as discussed below. To the best of our knowledge, the centered sample correlation matrix has not been studied under infinite fourth moments.  

\begin{theorem}[Sample correlation] \label{thm:samplecorr}
Let the data matrix $\X$ be defined as in \eqref{eq:data} with non-degenerate row distributions.
\begin{enumerate}
    \item[(1)]
    Assume $p/n\to 0$ as $p\to\infty$.
If \eqref{eq_mom_cond_unit_sphere} holds,
then, almost surely, the limiting spectral distribution of $\sqrt{n/p} \, (\bfR-\bfI)$ is the semicircle law $G$.\\
Moreover, if 
\begin{equation} \label{eq:moma}
\frac{n^2}{p^2} \sum_{i=1}^p \E \big[ \widetilde{Y}_{i1}^4 \big] = o(1)\,,\qquad \pto\,,
\end{equation}
then, almost surely, the limiting spectral distribution of $\sqrt{n/p} \, (\bfR_c-\bfI)$ is the semicircle law~$G$.
\item[(2)]
    Assume $p/n\to \gamma >0 $ as $p\to\infty$. 
If \eqref{eq_mom_cond_unit_sphere} holds,
then, almost surely, the limiting spectral distribution of $\bfR$ is the \MP distribution $F_{\gamma}$.\\
Moreover, if \eqref{eq:moma} holds,
then, almost surely, the limiting spectral distribution of $\bfR_c$ is the \MP distribution $F_{\gamma}$.
\end{enumerate}
\end{theorem}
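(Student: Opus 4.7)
The plan is to deduce Theorem~\ref{thm:samplecorr} directly from Theorem~\ref{thm_unit_sphere} by observing that $\bfR = \bfY\bfY'$ and $\bfR_c = \widetilde{\bfY}\widetilde{\bfY}'$, where $\bfY$ and $\widetilde{\bfY}$ are defined in \eqref{eq_def_y_corr} and \eqref{eq:defytilde} so that their rows lie on the Euclidean unit sphere in $\R^n$. Non-degeneracy of the row distributions guarantees that the normalizers $\sqrt{X_{i1}^2+\cdots+X_{in}^2}$ and $\sqrt{\widetilde{X}_{i1}^2+\cdots+\widetilde{X}_{in}^2}$ are strictly positive almost surely, and independence of the rows of $\bfY$ and $\widetilde{\bfY}$ is inherited from that of $\bfX$. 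Thus it suffices to verify the moment-symmetry and decay conditions required by Theorem~\ref{thm_unit_sphere}.

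For $\bfY$, the i.i.d.\ structure of $X_{i1},\ldots,X_{in}$ together with the fact that $X_{i1}^2+\cdots+X_{in}^2$ is a symmetric function of its arguments implies that the random vector $(Y_{i1},\ldots,Y_{in})$ is exchangeable. This immediately yields $\E[Y_{ki}^2]=\E[Y_{kj}^2]$, $\E[Y_{ki}^4]=\E[Y_{kj}^4]$, $\E[Y_{ki}Y_{kj}]=\E[Y_{k1}Y_{k2}]$ and $\E[Y_{ki}^2 Y_{kj}^2]=\E[Y_{k1}^2 Y_{k2}^2]$ for $1\le i<j\le n$. Since \eqref{eq_mom_cond_unit_sphere} is assumed outright, Theorem~\ref{thm_unit_sphere} applied to $\bfY$ produces the semicircle law for $\sqrt{n/p}\,(\bfR-\bfI)$ when $\gamma=0$ and the \MP law $F_\gamma$ for $\bfR$ when $\gamma>0$.

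The centered case proceeds analogously, once one notes that both $\bar X_i$ and $\widetilde{X}_{i1}^2+\cdots+\widetilde{X}_{in}^2$ are symmetric functions of $X_{i1},\ldots,X_{in}$, so that $(\widetilde{Y}_{i1},\ldots,\widetilde{Y}_{in})$ is again exchangeable and inherits the same moment equalities. The first decay condition in \eqref{eq_mom_cond_unit_sphere} applied to $\widetilde{\bfY}$ is precisely the hypothesis \eqref{eq:moma}. The second condition, however, must be verified, and here I would exploit the zero-sum identity $\sum_{t=1}^n \widetilde{X}_{it}=0$, which forces $\sum_{t=1}^n \widetilde{Y}_{it}=0$. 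Squaring and taking expectations, and using exchangeability together with $\sum_{t=1}^n \widetilde{Y}_{it}^2=1$, gives
\begin{equation*}
0 \;=\; \E\left[\left(\sum_{t=1}^n \widetilde{Y}_{it}\right)^{2}\right] \;=\; 1 + n(n-1)\, \E[\widetilde{Y}_{i1}\widetilde{Y}_{i2}],
\end{equation*}
hence $|\E[\widetilde{Y}_{i1}\widetilde{Y}_{i2}]| = 1/(n(n-1))$ and
\begin{equation*}
\frac{n^2}{p^{3/2}} \sum_{i=1}^p \big|\E[\widetilde{Y}_{i1}\widetilde{Y}_{i2}]\big| \;=\; \frac{n}{(n-1)\sqrt{p}} \;=\; o(1).
\end{equation*}
An application of Theorem~\ref{thm_unit_sphere} to $\widetilde{\bfY}$ then delivers the stated limits for $\bfR_c$.

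The genuine analytic effort is thus concentrated in Theorem~\ref{thm_unit_sphere}; at this stage the only subtle point is the observation that the zero-sum property of the centered rows renders the second condition in \eqref{eq_mom_cond_unit_sphere} automatic, so that no moment assumption beyond \eqref{eq:moma} is required to treat $\bfR_c$.
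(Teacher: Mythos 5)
Your proposal is correct and follows essentially the same route as the paper: both reduce the theorem to Theorem~\ref{thm_unit_sphere}, with the only real work being the observation that $\sum_{t=1}^n \widetilde{Y}_{it}=0$ together with $\sum_{t=1}^n \widetilde{Y}_{it}^2=1$ forces $\E[\widetilde{Y}_{i1}\widetilde{Y}_{i2}]=-1/(n(n-1))$, so that \eqref{eq:moma} already implies \eqref{eq_mom_cond_unit_sphere} for the centered matrix. Your explicit verification of the exchangeability-based moment symmetries is a detail the paper leaves implicit, but the argument is the same.
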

\begin{proof}
The statements for $\bfR$ are a direct application of Theorem \ref{thm_unit_sphere}. Regarding $\bfR_c$, we note that $\sum_{t=1}^n \widetilde{Y}_{it}=0$ by construction. 
In conjunction with $\sum_{j=1}^n \widetilde{Y}_{ij}^2 = 1$ for $1 \leq i \leq p$, it is easy to check that $\E[\widetilde{Y}_{i1} \widetilde{Y}_{i2}]=-1/(n(n-1))$ and therefore we conclude
\begin{align*}
 \frac{n^2}{p^{3/2}} \sum_{i=1}^p |\E [ \widetilde{Y}_{i1} \widetilde{Y}_{i2} ]|=\frac{n^2}{p^{3/2}}  \cdot \frac{p}{n(n-1)} =o(1)\,, \qquad \pto\,,
\end{align*}
which shows that \eqref{eq:moma} implies \eqref{eq_mom_cond_unit_sphere}. An application of Theorem \ref{thm_unit_sphere} concludes the proof of the statements for $\bfR_c$.   
\end{proof}

The following table summarizes the conditions for the limiting spectral distributions of Spearman's $\rho$, Kendall's $\tau$ and sample correlation matrices, respectively.

\begin{tabular}{|l||c | c |c|}
\hline
Matrix: & $\SR$ (Spearman's $\rho$) & $\bfT$ (Kendall's $\tau$) & $\bfR$ (Sample correlation) \\
\hline
Statement: & Theorem \ref{thm:spearman} & Theorem \ref{thm:kendall} & Theorem \ref{thm:samplecorr} \\
\hline
Assumption: & Assumption \ref{ass_asympt_non_degen} & Assumption \ref{ass_asympt_non_degen} & $\frac{n^2}{p} \E[Y_{11}^4] =o(1)$ \\
\hline
\end{tabular}

\begin{remark} \label{rem_mom_cond}
{\em 
To provide a better understanding of Theorem \ref{thm:samplecorr} on the limiting spectral distribution of the sample correlation matrices $\bfR$ and $\bfR_c$, we will shed light on conditions \eqref{eq_mom_cond_unit_sphere} and \eqref{eq:moma} when $Y_{it}$ and $\widetilde Y_{it}$  follow the definitions \eqref{eq_def_y_corr} and \eqref{eq:defytilde}, respectively. In the following, we assume that the matrix $(X_{it})_{1 \leq i \leq p, 1\leq t \leq n}$ has i.i.d. entries with generic element $X$ that satisfies $\E[X^2]=1$ if $\E[X^2]<\infty$. Thus, condition \eqref{eq_mom_cond_unit_sphere} reduces to
\begin{align} \label{eq_mom_cond_y}
    \frac{n^2}{p} \E[Y_{11}^4] =o(1) \quad \text { and } \quad \frac{n^2}{p^{1/2}} \E[Y_{11}Y_{12}]=o(1)\,, \qquad \pto\,.
\end{align}

\begin{enumerate}
    \item
To begin with, let $\gamma>0$. In this case, the first moment condition in \eqref{eq_mom_cond_y} is equivalent to
\begin{equation}\label{eq:ny4}
\lim_{\pto} n\,\E [ Y_{11}^4 ] =0\,.
\end{equation}
Condition \eqref{eq:ny4} can be identified as a necessary condition for the convergence of $F_{\bfR}$ to the \MP law (see Theorem 3.1 in \cite{heiny:mikosch:2017:corr}), and therefore, in this setting, the proposed moment condition in Theorem \ref{thm_unit_sphere} is minimal. 

Furthermore, we know from \cite[Theorem~2.4]{doernemann:heiny:2025} that \eqref{eq:ny4} is equivalent to $X$ being in the domain of attraction of the normal distribution, which in turn implies by \cite{gine:goetze:mason:1997} that
\begin{align}\label{eq:hfdhdrs} 
    n^{2} \E [ Y_{11} Y_{12} ] = o(1). 
\end{align}

\item Next, we assume that $\gamma=0$. In this case, we notice that the validity of \eqref{eq_mom_cond_y} depends on the rates with which $p/n$ and $n\E [ Y_{11}^4 ]$ tend to $0$. Since \eqref{eq:ny4} is necessary for \eqref{eq_mom_cond_y}, we deduce from \eqref{eq:hfdhdrs} that the second moment condition in \eqref{eq_mom_cond_y} can be dropped without loss of generality. 

In turn, this implies that condition \eqref{eq_mom_cond_y} is satisfied for all distributions in the domain of attraction of the normal law if $p$ grows sufficiently fast; see Example \ref{ex:grp} for more details.
\item Finally, we turn to condition \eqref{eq:moma} concerning the centered sample correlation matrix $\bfR_c$, which can be written as
\begin{align}\label{eq:tildejkjk}
    \frac{n^2}{p} \E[\widetilde{Y}_{11}^4] 
		= \frac{n}{p} \E\left[ \frac{\sum_{j=1}^n \big(X_{1j}-\bar{x}_1\big)^4}{\Big(\sum_{j=1}^n \big(X_{1j}-\bar{x}_1\big)^2\Big)^2} \right]
		=o(1)\,, \qquad \pto\,,
\end{align}
where $\bar{x}_1=\frac{1}{n} \sum_{t=1}^n X_{1t}$. We note that the ratio in the expectation is bounded by one. Without loss of generality we assume $\E[X]=0$ if $\E[|X|]$ is finite. Then one can show that 
\begin{align*}
\sum_{j=1}^n \big(X_{1j}-\bar{x}_1\big)^4&= \sum_{j=1}^n X_{1j}^4 \, (1+o_{\P}(1)),\\
\sum_{j=1}^n \big(X_{1j}-\bar{x}_1\big)^2&= \sum_{j=1}^n X_{1j}^2 \, (1+o_{\P}(1));
\end{align*}
see \cite[p.~5]{cohen:davis:samorodnitsky:2020} for a proof in the heavy-tailed case. In conjunction with the above mentioned boundedness of the ratio, it follows $\E[\widetilde{Y}_{11}^4]\sim \E[Y_{11}^4]$, which establishes that condition \eqref{eq:tildejkjk} is equivalent to the first moment condition in \eqref{eq_mom_cond_y}.
\end{enumerate}
}\end{remark}

\begin{example}\label{ex:grp}{\em
We provide further examples satisfying the condition $\tfrac{n^2}{p} \, \E[Y_{11}^4]=o(1)$ in the setting of Remark~\ref{rem_mom_cond}. To this end, we use tools from \cite{fuchs:joffe:teugels:2001} to analyze the asymptotic behavior of $\E[Y_{11}^4]$. We know from Theorem 4.1 therein that 
\begin{equation}\label{eq:limsup}
\limsup_{\nto} \frac{n^2\,\E [ Y_{11}^4 ]}{\int_0^n x \P(X^2>x) \,\dint x} \le 2\,.
\end{equation}
The denominator in the above display may be interpreted as a truncated fourth moment of $X$. Indeed, we obtain via partial integration that
$\E[X^4]=2 \int_0^{\infty} x \P(X^2>x) \,\dint x.$
We consider different cases.
\begin{itemize}
\item If $\E[X^4]<\infty$, Theorem 3.1 in \cite{fuchs:joffe:teugels:2001} asserts that $n^2\,\E [ Y_{11}^4 ]\to \E[X^4]$. Therefore, condition \eqref{eq_mom_cond_y} holds.
\item Now let us assume that $X$ is regularly varying with index $\alpha\in [2,4)$, that is 
\begin{align*}\label{eq:regvar}
\P(|X|>x)=\dfrac{L(x)}{x^{\alpha}}\,,\qquad x>0,
\end{align*}
for a function $L$ that is slowly varying at infinity. Thus, regularly varying distributions possess power-law tails and moments of $|X|$ of higher order than $\alpha$ are infinite. Typical examples include the Pareto distribution with parameter $\alpha$ and the $t$-distribution with $\alpha$ degrees of freedom.

For regularly varying $X$ with index $\alpha\in [2,4)$, Theorem 5.1 in \cite{fuchs:joffe:teugels:2001} states that
\begin{equation*}
\lim_{\nto} \frac{\E [ Y_{11}^4 ]}{\P(X^2>n)} = \Gamma(2-\alpha/2) \Gamma(1+\alpha/2)\,.
\end{equation*}
Using a Potter bound argument, we conclude that $n^2\,\E \big[ Y_{11}^4 \big]\le n^{2-\alpha/2+\vep}$
for any $\vep>0$ provided that $n$ is sufficiently large.
Therefore, if $n=p^\delta \ell(n)\to \infty$ for some slowly varying function $\ell$ and  $\delta \in [1,\infty)$, then condition \eqref{eq_mom_cond_y} holds for all regularly varying distributions $X$ with index $\alpha>4-2/\delta$.
\end{itemize}
}\end{example}

\section{Proofs of Theorem \ref{thm:spearman} and Theorem \ref{thm_unit_sphere}}
\label{sec_proof_spearman_unit_sphere}
In this section, we provide the proofs of Theorem \ref{thm:spearman} and Theorem \ref{thm_unit_sphere}. 
Section \ref{sec:proof_lsd} contains the proof of Theorem \ref{thm_unit_sphere} and the analogue assertion for the Spearman's rank correlation matrix is proven in Section~\ref{sec_proof_rho}. Both proofs rely on the Stieltjes transform, while a different strategy is employed for the proof of Theorem \ref{thm:kendall} in Section~\ref{sec_proof_kendall}. Before turning to the proofs, we start with some preparations.

\subsection*{Preliminaries}
In the following, the spectral (or operator) norm $\twonorm{\bfC}$ is the square root of the largest eigenvalue of $\bfC\bfC'$ for any matrix $\mathbf{C}$. Moreover, we will use the notation  $a_p \lesssim b_p$ for sequences $(a_p)_p$ and $(b_p)_p$ of real numbers if there exists a positive constant $c$ not depending on $p$ such that $a_p \le c\, b_p$ for sufficiently large $p$.
Note that while the constant $c$ is not allowed to vary with $p\in\N$, it may depend on $z\in\mathbb{C}^+:=\{q\in \C: \Im(q)>0\}$.

For identifying the limiting spectral distribution, an extremely useful tool is the {\em Stieltjes transform}
\begin{equation*}
s_{\bfA}(z)= \int_{\R} \frac{1}{x-z} \dint F_{\bfA}(x) = \frac{1}{p} \tr(\bfA -z \bfI)^{-1}\,, \quad z\in \C^+\,
\end{equation*}
of the empirical spectral distribution $F_{\bfA}$,
where $\C^+$ denotes the complex numbers with positive imaginary part and $\bfA$ is a $p\times p$ matrix with real eigenvalues. 
Weak convergence of $(F_{\bfA_n})$ to $F$ is equivalent to $s_{F_{\bfA_n}}(z) \to s_F(z)$ a.s.~for all $z\in \C^+$
(see \cite[Chapter~3]{bai:silverstein:2010} or \cite{bai:fang:liang:2014,yao:zheng:bai:2015}).
The semicircle law $G$, defined in \eqref{eq:semicircle}, has Stieltjes transform
\begin{equation} \label{eq_stieltjes_semicircle}
s_G(z)=\frac{\sqrt{z^2-4} -z}{2}\,.
\end{equation}
The Stieltjes transform of the \MP law $F_{\gamma}$, defined in \eqref{eq:MPch1}, is given by
  \begin{align} \label{eq_stieltjes_mp}
        s_{F_{\gamma}}(z)
        & =  \frac{1 - \gamma - z + \sqrt{( 1- \gamma - z)^ 2 - 4 \gamma z }}{2\gamma z}.
    \end{align}

\subsection{Proof of Theorem \ref{thm_unit_sphere}} \label{sec:proof_lsd}

In view of the discussion below Theorem \ref{thm_unit_sphere}, we use the notation $\bfR=\Y\Y'$ with $\Y$ as described in Theorem \ref{thm_unit_sphere}. Let $\gn:= p/n$. In order to show Theorem \ref{thm_unit_sphere} part (1), we will prove that the Stieltjes transform of $\gn^{-1/2} (\bfR-\bfI)$ converges to the Stieltjes transform $s_G$ of the semicircle distribution $G$ almost surely. This implies weak convergence of $F_{\gn^{-1/2} (\bfR-\bfI)}$ to $G$ almost surely (see, e.g., \cite{fleermann:kirsch:2023}).
For $z\in \C^+$, we proceed in 2 steps. 
\begin{itemize}
\item[(1)] In Lemma \ref{lem:expectedtransform}, we show that $s_{\gn^{-1/2} (\bfR-\bfI)}(z) -  \E[s_{\gn^{-1/2} (\bfR-\bfI)}(z)]\to 0$ \as\ for $\pto$.
\item[(2)] In Lemma \ref{lem:convexp}, we show $\E[s_{\gn^{-1/2} (\bfR-\bfI)}(z)] \to s_G(z)$ for $\pto$.
\end{itemize}
Finally, Lemma \ref{lem_mp} considers the \MP regime $\gamma >0$ in Theorem \ref{thm_unit_sphere} part (2) and its proof will be a subtle modification of the ideas for the case $\gamma = 0$.

\begin{lemma}\label{lem:expectedtransform}
Assume $\gn \to 0$.
Then
\begin{equation*}
s_{\gn^{-1/2} (\bfR-\bfI)}(z) - \E[s_{\gn^{-1/2} (\bfR-\bfI)}(z)] \cas 0\,, \qquad \pto\,, z\in \C^+\,.
\end{equation*}
\end{lemma}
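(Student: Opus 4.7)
\textbf{Proof plan for Lemma \ref{lem:expectedtransform}.}

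The plan is to apply the standard martingale difference method. Let $\mathcal{F}_k$ denote the $\sigma$-algebra generated by the first $k$ rows $\mathbf{y}_1,\ldots,\mathbf{y}_k$ of $\bfY$ (with $\mathcal{F}_0$ trivial), and write $\E_k[\,\cdot\,]:=\E[\,\cdot\mid\mathcal{F}_k]$. Since the rows are independent, setting $s(z):=s_{\gn^{-1/2}(\bfR-\bfI)}(z)$ yields the telescoping identity
\begin{equation*}
s(z)-\E[s(z)] = \sum_{k=1}^{p} d_k, \qquad d_k := (\E_k-\E_{k-1})[s(z)],
\end{equation*}
and $(d_k)_{k=1}^p$ is a martingale difference sequence.

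The key ingredient is a uniform bound on $|d_k|$ obtained from a rank-one perturbation estimate. A direct rescaling gives
\begin{equation*}
s(z) \;=\; \frac{1}{p}\tr\bigl(\gn^{-1/2}(\bfR-\bfI)-z\bfI\bigr)^{-1} \;=\; \frac{\gn^{1/2}}{p}\tr\bigl(\bfR - \tilde z \bfI\bigr)^{-1},
\end{equation*}
where $\tilde z := 1+\gn^{1/2}z$ satisfies $\Im(\tilde z)=\gn^{1/2}\Im(z)>0$. Let $\bfR_{(k)}$ be the matrix obtained from $\bfR$ by replacing the $k$-th row of $\bfY$ by $\mathbf{0}$; then $\bfR-\bfR_{(k)}$ has rank at most one. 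The classical rank-one inequality for resolvents (see, e.g., Lemma~2.6 in \cite{bai:silverstein:2010}) yields
\begin{equation*}
\bigl|\tr(\bfR-\tilde z\bfI)^{-1}-\tr(\bfR_{(k)}-\tilde z\bfI)^{-1}\bigr| \;\leq\; \frac{1}{|\Im(\tilde z)|} \;=\; \frac{1}{\gn^{1/2}|\Im(z)|}.
\end{equation*}
Since the Stieltjes transform $s^{(k)}(z)$ of $\gn^{-1/2}(\bfR_{(k)}-\bfI)$ is $\mathcal{F}_{k-1}$-measurable after integrating out the $k$-th row, $\E_k[s^{(k)}]=\E_{k-1}[s^{(k)}]$, and hence
\begin{equation*}
|d_k| \;=\; \bigl|(\E_k-\E_{k-1})[s(z)-s^{(k)}(z)]\bigr| \;\leq\; 2\cdot\frac{\gn^{1/2}}{p}\cdot\frac{1}{\gn^{1/2}|\Im(z)|} \;=\; \frac{2}{p|\Im(z)|}.
\end{equation*}

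Finally, apply Burkholder's inequality: for some absolute constant $K$,
\begin{equation*}
\E\Bigl|\sum_{k=1}^p d_k\Bigr|^{4} \;\leq\; K\,\E\Bigl[\Bigl(\sum_{k=1}^p |d_k|^2\Bigr)^{2}\Bigr] \;\leq\; K\Bigl(\frac{4}{p|\Im(z)|^2}\Bigr)^{2} \;\lesssim\; \frac{1}{p^{2}}.
\end{equation*}
Markov's inequality then gives $\P(|\sum_k d_k|>\varepsilon)\lesssim\varepsilon^{-4}p^{-2}$, which is summable in $p$, so the Borel--Cantelli lemma yields $\sum_k d_k \to 0$ almost surely, proving the claim.

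Because the entire argument depends on $\bfY$ only through the rank-one structure of each row's contribution and the bound $|\Im(\tilde z)|>0$, no moment conditions are used here — the hypotheses \eqref{eq_mom_cond_unit_sphere} will only enter in the second step (Lemma~\ref{lem:convexp}), where the limit of the expected Stieltjes transform is identified. There is no real obstacle: the argument is a direct application of the rank-one perturbation / martingale technique, and the rescaling to $\tilde z$ ensures the bounds are uniform in $p$ despite the non-trivial normalization $\gn^{-1/2}$.
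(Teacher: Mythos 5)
Your proof follows essentially the same route as the paper's: a martingale-difference decomposition over the rows of $\bfY$, a uniform bound on the increments via a low-rank resolvent perturbation at the shifted point $\tilde z=1+\gn^{1/2}z$, and then concentration plus Borel--Cantelli. The only genuine differences are cosmetic: the paper first passes to the companion matrix $\Y'\Y$, for which removing $\y_k'\y_k$ is an honest rank-one perturbation, and it closes with Azuma's inequality (exponential bound), whereas you stay with the $p\times p$ matrix $\bfR=\Y\Y'$ and close with Burkholder's fourth-moment inequality plus Markov; both routes give summable tail bounds and the same conclusion, with no moment conditions needed at this stage, exactly as you observe.

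One slip needs fixing. With $\bfR_{(k)}$ defined by zeroing the $k$-th row of $\bfY$, the difference $\bfR-\bfR_{(k)}$ is not rank one: writing $w=\bfY\y_k'$, it equals $e_k w'+w e_k'-w_k e_k e_k'$, which is generically of rank two and is not of the form $\tau qq'$, so the rank-one lemma you cite does not apply verbatim. The fix is immediate: either use the general rank inequality $|\tr(\bfA-z\bfI)^{-1}-\tr(\bfB-z\bfI)^{-1}|\le \operatorname{rank}(\bfA-\bfB)/\Im(z)$, which gives $2/\Im(\tilde z)$ and hence $|d_k|\le 4/(p\,\Im(z))$, changing nothing downstream; or compare with the principal submatrix $\Y_{-k}\Y_{-k}'$ and use the interlacing bound for deleting a row and column; or, as in the paper, work with $\Y'\Y=\sum_k\y_k'\y_k$, where the rank-one resolvent lemma applies directly and yields the increment bound $2/(nv\gn^{1/2})$, i.e.\ again $O(1/p)$ after the $\gn^{-1/2}$ rescaling. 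With that correction your argument is complete.
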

\begin{proof}
We use the fact that
$\bfR=\Y\Y'$ and $\Y'\Y$ have the same non-zero eigenvalues with the same multiplicities. 
Since $\lambda_{i}(\Y'\Y)=0$ whenever $i> p$ 
we obtain a connection between the two spectral distributions:
\begin{equation*}
F_{\Y'\Y}(x)=\Big(1-\frac{p}{n}\Big) \1_{[0,\infty)}(x) + \frac{p}{n} F_{\Y\Y'}(x)\,,\qquad x\in \R\,.
\end{equation*}
Hence, 
\begin{align}
s_{\bfR}(z)&= \int \frac{1}{x-z} \dint F_{\bfR}(x)\nonumber\\
&= \int \frac{1}{x-z} \dint \Big(\frac{n}{p} F_{\Y'\Y} - \Big( \frac{n}{p}-1 \Big) \1_{[0,\infty)} \Big)(x)\nonumber\\
&= \frac{n}{p} s_{\Y'\Y}(z)- \Big( \frac{n}{p}-1 \Big) \frac{1}{-z}\,,\qquad z\in\C^+\,. 
\label{eq:sgdsglop}
\end{align}
Note that $s_{\bfR - \bfI} (z) = s_{\bfR} (z +1)$ and for a constant $c\neq 0$, we have $s_{c\bfA}(z)=c^{-1} s_{\bfA}(c^{-1}z)$.
Therefore, we obtain
\begin{equation}
\label{eq:n1}
\begin{split}
s_{\gn^{-1/2} (\bfR-\bfI)}(z)&= \gn^{1/2} s_{\bfR}(1+\gn^{1/2} z)
= \gn^{1/2} \Big( \gn^{-1} s_{\Y'\Y}(1+\gn^{1/2} z) + \frac{\gn^{-1}-1}{1+\gn^{1/2} z} \Big)\,.
\end{split}
\end{equation}
So it remains to show
\begin{equation}\label{eq:gesgdg}
\gn^{-1/2} ( s_{\Y'\Y}(1+\gn^{1/2} z)-\E[s_{\Y'\Y}(1+\gn^{1/2} z)]) \cas 0\,, \quad \pto\,, z\in \C^+\,.
\end{equation}
We proceed analogously to the proof of Lemma 6 in \cite{elkaroui:2009}. Let $\y_i\in \R^{1\times n}, 1\le i\le p$, denote the rows of $\Y$ and note that $\Y'\Y= \sum_{i=1}^p \y_i' \y_i$. Write $M_k=\Y'\Y -\y_k' \y_k$ and let $\mathcal{F}_j$ denote the filtration generated by $\y_i,1\le i\le j$, and we define $\mathcal{F}_ 0 = \{ \emptyset \}$. Since $M_k$ is independent of $\y_k$, we have
\begin{equation}\label{eq:dsgfds}
\E\big[\tr((M_k-z\bfI)^{-1})| \mathcal{F}_k\big]=\E\big[\tr((M_k-z\bfI)^{-1})| \mathcal{F}_{k-1}\big]\,, \quad z\in \C^+\,.
\end{equation}
We will write the \lhs~of \eqref{eq:gesgdg} as a sum of martingale differences. For simplicity of notation we write $q=1+\gn^{1/2} z$, $s=s_{\Y'\Y}$ and denote the imaginary part of $z$ by $v$.
For $z\in \C^+$ we have
\begin{equation}\label{eq:martdiff}
s(q)-\E[s(q)]= \sum_{k=1}^p \left\{ \E[s(q)| \mathcal{F}_k]-\E[s(q)| \mathcal{F}_{k-1}] \right\} ,
\end{equation}
and therefore,
\begin{equation*}
\begin{split}
|s(q)-\E[s(q)]|
&\le \sum_{k=1}^p \left[ \big| \E\big[ s(q) -\tfrac{1}{n} \tr((M_k-q\bfI)^{-1}) | \mathcal{F}_k  \big]\big|+  \big| \E\big[ s(q) -\tfrac{1}{n} \tr((M_k-q\bfI)^{-1}) | \mathcal{F}_{k-1}  \big]\big|\right]\\
&\le \sum_{k=1}^p \frac{2}{n v \gn^{1/2}}.
\end{split}
\end{equation*}
Here, the first inequality is due to the triangle inequality and \eqref{eq:dsgfds}, and the last inequality follows from Lemma 2.6 in \cite{silverstein:bai:1995}, which allows concluding $|s(q)-n^{-1}\tr(M_k-q\bfI)^{-1}|\leq n^{-1}v^{-1}\gamma_p^{-1/2}$. Thus $s(q)-\E[s(q)]$ and also its real and imaginary parts are a sum of bounded martingale differences. By Azuma's inequality (see Lemma 4.1 in \cite{ledoux:2001}), we get
\begin{equation*}
\begin{split}
\P(\gn^{-1/2}|s(q)-\E[s(q)]|>\vep)& \le 4 \exp\Big( -\frac{\vep^2 \gn^2 n^2 v^2}{32 p}\Big) = 4 \exp\Big( -\frac{\vep^2 p v^2}{32}\Big)\,, \quad \vep >0\,.
\end{split}
\end{equation*}
This implies \eqref{eq:gesgdg} via the first Borel--Cantelli lemma. 
\end{proof}

\begin{lemma}\label{lem:convexp}
Assume $\gn \to 0$.
Then
\begin{equation*}
\E[s_{\gn^{-1/2} (\bfR-\bfI)}(z)] \to s_G(z)\,, \qquad \pto\,, z\in \C^+\,,
\end{equation*}
where the Stieltjes transform $s_G$ of the semicircle law is defined in \eqref{eq_stieltjes_semicircle}.
\end{lemma}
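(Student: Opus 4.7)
The plan is to derive, via the Schur complement applied to the diagonal of the resolvent $(\bfR - w\bfI)^{-1}$, a quadratic fixed-point equation for $\E[s_{\bfR}(w)]$ evaluated at $w = 1 + \gn^{1/2}z$. After the singular rescaling $S_p(z) := \gn^{1/2}\E[s_{\bfR}(1 + \gn^{1/2}z)]$, this equation reduces in the limit $\gn \to 0$ to the semicircle relation $S^{2}+zS+1=0$. Since the root with $\Im(S)>0$ for $z\in\C^{+}$ is $s_G(z)$ (cf.\ \eqref{eq_stieltjes_semicircle}) and $\Im(S_p(z))\geq 0$ because $\Im(w)=\gn^{1/2}\Im(z)>0$, the correct branch is automatically selected and the limit identified.

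To extract the equation, let $\y_k$ denote the $k$-th row of $\Y$, and set $M_k := \Y'\Y - \y_k'\y_k$ together with $G_k(w) := (M_k - w\bfI)^{-1}$. The algebraic identity $\Y_{(k)}'(\Y_{(k)}\Y_{(k)}' - w\bfI)^{-1}\Y_{(k)} = \bfI + w\,G_k(w)$, combined with the unit-sphere constraint $\|\y_k\|^{2}=1$, produces the crucial cancellation
\[
[(\bfR - w\bfI)^{-1}]_{kk} = \frac{-1}{w\,(1 + Q_k)}, \qquad Q_k := \y_k G_k(w)\y_k'.
\]
Conditionally on $\Y_{(k)}$, pairwise exchangeability within rows gives
\[
\E[Q_k \mid \Y_{(k)}] = \tfrac{1}{n}\tr G_k(w) + \E[Y_{k1}Y_{k2}]\bigl(\mathbf{1}' G_k(w)\mathbf{1} - \tr G_k(w)\bigr).
\]
Using the deterministic bound $\|G_k(w)\|\leq 1/\Im(w)$, the cross-term aggregates over $k$ to $o(\gn^{1/2})$ via the second part of \eqref{eq_mom_cond_unit_sphere}. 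The conditional variance $\Var(Q_k\mid\Y_{(k)})\lesssim \E[Y_{k1}^{4}]\,\tr\bigl(G_k(w)G_k(w)^{*}\bigr)\lesssim n\E[Y_{k1}^{4}]/\Im(w)^{2}$ combined with Cauchy--Schwarz in the average over $k$ and the first part of \eqref{eq_mom_cond_unit_sphere} shows that the corresponding fluctuation also contributes $o(\gn^{1/2})$.

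Next, I would relate $\tr G_k(w)$ back to $s_{\bfR}(w)$ using the dimension identity \eqref{eq:sgdsglop} applied to $\Y_{(k)}$, yielding $\tfrac{1}{n}\tr G_k(w) = \gn s_{\Y_{(k)}\Y_{(k)}'}(w) + (\gn-1)/w + O(1/p)$, together with the rank-one Stieltjes bound $|s_{\bfR}(w) - s_{\Y_{(k)}\Y_{(k)}'}(w)|\leq 1/(p\,\Im(w))$ from Lemma~2.6 of \cite{silverstein:bai:1995}. The a priori estimate $|[(\bfR - w\bfI)^{-1}]_{kk}|\leq 1/\Im(w)$ supplies the uniform lower bound $|1+Q_k|\geq \Im(w)/|w|$, which converts concentration of $Q_k$ into concentration of $1/(1+Q_k)$. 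Averaging the Schur identity over $k$ and taking expectations gives
\[
-w\,\E[s_{\bfR}(w)]\Bigl(\gn\,\E[s_{\bfR}(w)] + \tfrac{w-1+\gn}{w}\Bigr) = 1 + o(1).
\]
Substituting $w = 1+\gn^{1/2}z$ and $S_p(z) = \gn^{1/2}\E[s_{\bfR}(w)]$ recasts this as $-(1+\gn^{1/2}z)S_p(z)^{2} - (z+\gn^{1/2})S_p(z) = 1 + o(1)$, and sending $\gn\to 0$ yields $S^{2}+zS+1=0$ with $\Im(S)\geq 0$, forcing $S_p(z) \to s_G(z)$.

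The principal obstacle is ensuring every error term survives multiplication by the diverging factor $\gn^{-1/2}$: both the quadratic-form concentration and the cross-term contribution must be $o(\gn^{1/2})$ in the average over $k$, which is precisely the content of \eqref{eq_mom_cond_unit_sphere} once Cauchy--Schwarz is applied to the per-row fourth moments and paired with the uniform resolvent bound $1/\Im(w)$ over the entire family of row-removed submatrices. A secondary care point is the passage from an $L^{1}$-style estimate on $|Q_k - \E[Q_k\mid\Y_{(k)}]|$ to an estimate on $|1/(1+Q_k) - 1/(1+\E[Q_k\mid\Y_{(k)}])|$, which requires the uniform lower bound on $|1+Q_k|$ derived above.
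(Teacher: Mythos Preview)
Your overall architecture matches the paper's: Schur complement on the diagonal, quadratic-form concentration for $Q_k=\y_kG_k(w)\y_k'$, the dimension identity to relate $\tfrac{1}{n}\tr G_k$ back to $s_{\bfR}$, and a self-consistent quadratic equation that collapses to the semicircle relation under the rescaling $S_p=\gn^{1/2}\E[s_{\bfR}(1+\gn^{1/2}z)]$. The branch selection via $\Im S_p\ge 0$ is also correct.

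There is, however, a quantitative gap that breaks the argument as written. You bound
\[
\Var(Q_k\mid\Y_{(k)})\ \lesssim\ \E[Y_{k1}^4]\,\tr\bigl(G_k(w)G_k(w)^*\bigr)\ \lesssim\ \frac{n\,\E[Y_{k1}^4]}{\Im(w)^2}\ =\ \frac{n^2\,\E[Y_{k1}^4]}{p\,v^2}\,.
\]
Feeding this into your $L^1$ conversion $|1/(1+Q_k)-1/(1+c_k)|\le |Q_k-c_k|/(|1+Q_k|\,|1+c_k|)$, with both denominators of size $\gtrsim \gn^{1/2}$, and then averaging over $k$ with Cauchy--Schwarz and \eqref{eq_mom_cond_unit_sphere}, yields only an $o(\gn^{-1})$ error in $\E[s_{\bfR}(w)]$, not the required $o(\gn^{-1/2})$. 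Concretely, even in the ``best'' case $\E[Y_{k1}^4]\sim n^{-2}$ your bound gives an error of order $\sqrt{n}/p$ after the rescaling, which does not vanish when $p=o(\sqrt{n})$.

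The missing ingredient is that $M_k=\Y_{-k}'\Y_{-k}$ has rank at most $p-1$: $n-p+1$ of its eigenvalues equal $0$ and are therefore at distance $|w|\sim 1$ from $w$, while only $p-1$ eigenvalues can lie within $\Im(w)$ of $w$. This yields the sharp estimate
\[
\tr\bigl(G_k(w)G_k(w)^*\bigr)=\sum_{j=1}^{p-1}\frac{1}{|\lambda_j-w|^2}+\frac{n-p+1}{|w|^2}\ \lesssim\ \frac{p}{\Im(w)^2}+n\ \lesssim\ n,
\]
which is exactly a factor $\gn^{-1}$ better than your crude $n/\Im(w)^2$. With this bound in place, your first-order $L^1$ scheme does produce the needed $o(\gn^{-1/2})$ error. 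The paper obtains the same effect by working on the $(p-1)$-dimensional side: it uses $\bfB_k=\Y_{-k}'(\Y_{-k}\Y_{-k}'-q\bfI_{p-1})^{-1}\Y_{-k}$ and computes $\tr(\bfB_k\bfB_k^*)$ as a trace of a $(p-1)\times(p-1)$ matrix, arriving at $\tr(\bfB_k\bfB_k^*)\lesssim p+p/\Im(q)+p/\Im(q)^2\lesssim n$. The paper also organises the replacement of $Q_k$ by its deterministic target via a second-order expansion (the $J_1,J_2$ split), which avoids your $L^1$ step altogether; either route works once the rank-based trace bound is in hand.
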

\begin{proof}
By \eqref{eq:n1}, it suffices to prove that
\begin{equation}\label{eq:dsgs}
\lim_{\pto} \gn^{1/2} \E\big[s_{\bfR}(1+\gn^{1/2} z)\big] = \frac{\sqrt{z^2-4} -z}{2} = s_G(z)\,, \quad z\in \C^+\,.
\end{equation}
Let $v= \Im(z)>0$ and  $q= 1+\gn^{1/2} z \in \mathbb{C}$. In what follows, we will denote by $\overline{q}$ the complex conjugate of $q$. 
By Theorem A.4 in \cite{bai:silverstein:2010}, we have
\begin{equation*}
s_{\bfR}(q)= \frac{1}{p} \sum_{k=1}^p \frac{1}{\y_k \y_k'-q-\y_k \Y_{-k}' (\Y_{-k}\Y_{-k}'-q\bfI_{p-1})^{-1} \Y_{-k} \y_k'}\,,
\end{equation*}
where $\y_k\in \R^{1\times n}$ are the rows of $\Y$ and $\Y_{-k}\in \R^{(p-1)\times n}$ is the matrix $\Y$ with the $k$th row removed. In \cite[pages 55 and 56]{bai:silverstein:2010}, it is shown that 
\begin{equation}\label{eq:sols1}
\E[s_{\bfR}(q)]= \frac{1}{2\gn q} \left(1-q-\gn+\gn q \delta_p+\sqrt{(1-q-\gn-\gn q \delta_p)^2-4 \gn q}\right)\,,
\end{equation}
where 
\begin{equation*}
\begin{split}
\delta_p &= -\frac{1}{p} \sum_{k=1}^p \E\bigg[ \frac{\vep_k}{(1-q-\gn -\gn q \E[s_{\bfR}(q)]) (1-q-\gn -\gn q \E[s_{\bfR}(q)]+\vep_k)}  \bigg]\,, \\
\vep_k &= \y_k \y_k'-1-\y_k \Y_{-k}' (\Y_{-k}\Y_{-k}'-q\bfI_{p-1})^{-1} \Y_{-k} \y_k'+\gn+\gn q\E[s_{\bfR}(q)]\,.
\end{split}
\end{equation*}
Note that $\y_k \y_k'-1=0$ in the definition of $\vep_k$.
Using \eqref{eq:sols1}, $\gn\to 0$ and the definition of $q$, a straightforward calculation yields
\begin{equation*}
\gn^{1/2}\E[s_{\bfR}(q)]\sim \frac{-z+ z \gn\delta_p + \gn^{1/2} \delta_p +\sqrt{z^2-4+2\gn \delta_p +\gn \delta_p^2 + 2 \delta_p \gamma_p^{1/2} z }}{2}\,, \quad \pto\,.
\end{equation*}
Therefore, we obtain \eqref{eq:dsgs} provided 
\begin{equation}\label{eq:grsgrsd}
\lim_{\pto} \gn^{1/2} \delta_p =0\,.
\end{equation}
Thus, it remains to prove \eqref{eq:grsgrsd} in order to obtain \eqref{eq:dsgs}. To this end, we write
\begin{align}
\delta_p &= -\frac{1}{p} \sum_{k=1}^p  \frac{\E[\vep_k]}{(1-q-\gn -\gn q \E[s_{\bfR}(q)])^2 }  \nonumber \\
& \quad +\frac{1}{p} \sum_{k=1}^p \E\bigg[ \frac{\vep_k^2}{(1-q-\gn -\gn q \E[s_{\bfR}(q)])^2 (1-q-\gn -\gn q \E[s_{\bfR}(q)]+\vep_k)}  \bigg] \nonumber \\
 &=: J_1+J_2\,. \label{eq_def_J}
\end{align}

Turning to analysis of $\vep_k$, since $\y_k$ and $\Y_{-k}$ are independent, we have
\begin{equation}\label{eq:1234}
\begin{split}
\E \big[&\y_k \Y_{-k}' (\Y_{-k}\Y_{-k}'-q\bfI_{p-1})^{-1} \Y_{-k} \y_k'\big]= 
\tr \Big( \E\big[ (\Y_{-k}\Y_{-k}'-q\bfI_{p-1})^{-1} \Y_{-k}  \E[\y_k'\y_k] \Y_{-k}' \big]\Big)\\
&= \big( \tfrac{1}{n} -\E[Y_{k1}Y_{k2}]\big) \Big \{ p-1 +q \E\Big[ \tr \big( (\Y_{-k}\Y_{-k}'-q\bfI_{p-1})^{-1} \big) \Big] \Big\}\\
& \quad + \E[Y_{k1}Y_{k2}] \E\Big[ \tr \big( (\Y_{-k}\Y_{-k}'-q\bfI_{p-1})^{-1} \Y_{-k} 1_n1_n' \Y_{-k}' \big) \Big]\,,
\end{split}
\end{equation}
 where 
\begin{align} \label{mean_yty}
\E[\y_k'\y_k]= (n^{-1} -\E[Y_{k1}Y_{k2}]) \bfI_n +\E[Y_{k1}Y_{k2}] 1_n 1_n',
\end{align} 
and $1_n=(1,\ldots,1)'\in \R^n$ were used for the last equality. 
From the definition of $\vep_k$ and \eqref{eq:1234} we obtain
\begin{equation}\label{eq:1236}
\begin{split}
\E[\vep_k]&= \Big(\frac{p}{n}-\frac{p-1}{n}\Big) + \frac{q}{n}\Big(p\E[s_{\bfR}(q)] -\E\Big[ \tr \big( (\Y_{-k}\Y_{-k}'-q\bfI_{p-1})^{-1} \big) \Big]\Big)+\E[Y_{k1} Y_{k2}] (p-1)\\
 &\quad +\E[Y_{k1}Y_{k2}] q \E\Big[ \tr \big( (\Y_{-k}\Y_{-k}'-q\bfI_{p-1})^{-1} \big) \Big]\\
&\quad - \E[Y_{k1}Y_{k2}] \E \Big[1_n' \Y_{-k}'  (\Y_{-k}\Y_{-k}'-q\bfI_{p-1})^{-1} \Y_{-k} 1_n\Big].
\end{split}
\end{equation}
Since the matrix $1_n1_n'$ has rank one, and 
$$\text{Spectrum}(\Y_{-k}'  (\Y_{-k}\Y_{-k}'-q\bfI_{p-1})^{-1} \Y_{-k})\cup \{0\} =\text{Spectrum}( (\Y_{-k}\Y_{-k}'-q\bfI_{p-1})^{-1} \Y_{-k}\Y_{-k}' )\cup \{0\},$$ we have 
\begin{align}
|1_n' &\Y_{-k}'  (\Y_{-k}\Y_{-k}'-q\bfI_{p-1})^{-1} \Y_{-k} 1_n| = | \tr\Big( \Y_{-k}' (\Y_{-k}\Y_{-k}'-q\bfI_{p-1})^{-1} \Y_{-k} 1_n1_n' \Big)| \nonumber\\
&\le  \twonorm{ (\Y_{-k}\Y_{-k}'-q\bfI_{p-1})^{-1} \Y_{-k}\Y_{-k}'}  \,\twonorm{ 1_n1_n' } = \twonorm{\bfI + q(\Y_{-k}\Y_{-k}'-q\bfI_{p-1})^{-1}}\, n \nonumber \\
&\le n \Big(1+\frac{|q|}{\Im(q)}\Big) \lesssim \frac{n^{3/2}}{p^{1/2}}\,. \label{eq:new12}
\end{align}
Next we bound the terms in \eqref{eq:1236}.
To this end, we will repeatedly use the following fact which can be found in, for example, the appendix of \cite{bai:silverstein:2010}: For a real, symmetric, positive semidefinite $p\times p$ matrix $\bfC$, $z\in \C$ with $\Im(z)>0$ it holds
\begin{equation}\label{eq:le1}
\twonorm{(\bfC-z\bfI)^{-1}}\le \frac{1}{\Im(z)}\,.
\end{equation}
Using \cite[eq.~(A.1.12)]{bai:silverstein:2010} and \eqref{eq:le1}, 
one gets for $p$ sufficiently large (and uniformly in $k$), 
\begin{align}
|\E[\vep_k]|
& \lesssim \frac{1}{n} + \frac{|q|}{n \Im(q)} + |\E[Y_{k1} Y_{k2}]|\,p 
+|\E[Y_{k1} Y_{k2}]| \,\frac{p|q|}{\Im(q)}  
 +|\E[Y_{k1} Y_{k2}]| \, \frac{n^{3/2}}{p^{1/2}} \nonumber \\
 &\lesssim \frac{1}{n} + \frac{1}{n \Im(q)} + |\E[Y_{k1} Y_{k2}]| \, p \Big(1+\frac{1}{\Im(q)}\Big) +|\E[Y_{k1} Y_{k2}]| \, \frac{n^{3/2}}{p^{1/2}} \nonumber \\
 &\lesssim 
  \frac{1}{n^{1/2} p^{1/2}}+ |\E[Y_{k1} Y_{k2}]| \, \frac{n^{3/2}}{p^{1/2}}\, 
 \,. \label{eq:sgfedsgf} 
\end{align}

By (3.3.13) and p.~57 in \cite{bai:silverstein:2010}, we have
\begin{equation}\label{eq:dsgsdss}
\begin{split}
\Im(1-q-\gn -\gn q \E[s_{\bfR}(q)])&\le - \Im (q)\,,\\
\Im(1-q-\gn -\gn q \E[s_{\bfR}(q)]+\vep_k)&\le - \Im (q)\,.
\end{split}
\end{equation}
Applying  \eqref{eq:dsgsdss} and \eqref{eq:sgfedsgf} to the term $J_1$ in \eqref{eq_def_J}, we see that
\begin{equation*}
\gn^{1/2} |J_1| \lesssim \frac{n^{1/2}}{p^{3/2}} \sum_{k=1}^p |\E[\vep_k]|\lesssim \frac{1}{p}+\frac{n^2}{p^2} \sum_{k=1}^p |\E[Y_{k1}Y_{k2}]| = o(1)\,, \quad \pto\,,
\end{equation*}
where assumption \eqref{eq_mom_cond_unit_sphere} was used for the last equality.
Another application of \eqref{eq:dsgsdss}, this time to $J_2$ defined in \eqref{eq_def_J}, shows that 
\begin{equation*}
\begin{split}
\gn^{1/2}|J_2|&\le \frac{1}{p \, \gn \, v^3} \sum_{k=1}^p \E\big[|\vep_k|^2\big]\\
&\lesssim  \frac{1}{p \, \gn } \sum_{k=1}^p 
\Big( \E\big[|\vep_k-\E_k[\vep_k]|^2\big]  +\E\big[|\E_k[\vep_k]-\E[\vep_k]|^2\big]+|\E[\vep_k]|^2\big] \Big)\\
&=: J_{21}+J_{22}+J_{23}\,,
\end{split}
\end{equation*}
where $\E_k[\cdot]$ denotes the conditional expectation given $\Y_{-k}$.

In view of \eqref{eq:sgfedsgf} and assumption \eqref{eq_mom_cond_unit_sphere}, $J_{23}$ converges to zero. Next, we investigate the term $J_{21}.$ 
Writing $\bfB_k=(b_{ij})= \Y_{-k}' (\Y_{-k}\Y_{-k}'-q\bfI_{p-1})^{-1} \Y_{-k}$ and using \eqref{mean_yty} for the third equality, we have 
\begin{equation*}
\begin{split}
-\vep_k+\E_k[\vep_k]&= \y_k \bfB_k \y_k'- \E_k[\y_k \bfB_k \y_k']\\
&=\y_k \bfB_k \y_k' - \tr( \bfB_k \E[\y_k' \y_k])\\
&= \y_k \bfB_k \y_k' -\frac{1}{n}\tr( \bfB_k)+\E[Y_{k1} Y_{k2} ] (\tr( \bfB_k)-1_n'\bfB_k 1_n)
\end{split}
\end{equation*}
and therefore,
\begin{equation} \label{eq:defA}
\begin{split}
\E_k\Big[ \big| \vep_k-\E_k[\vep_k]\big|^2 \Big]
&\lesssim
\, \E_k\Big[ \Big| \y_k \bfB_k \y_k' -\frac{1}{n}\tr( \bfB_k) \Big|^2 \Big] +\, |\E[Y_{k1} Y_{k2}]|^2(|\tr(\bfB_k)|^2 + |1_n'\bfB_k 1_n|^2)\\
&=: A_{1k}+A_{2k}\,.
\end{split}
\end{equation}
We continue by analyzing the term $A_{2k}$. Using \eqref{eq:le1} we get
\begin{equation}\label{eq:trB}
|\tr( \bfB_k)|= \Big|q \tr\big((\Y_{-k}\Y_{-k}'-q\bfI_{p-1})^{-1} \big) + \tr(\bfI_{p-1})\Big| \le |q| \frac{p}{\Im(q)}+p\lesssim  \sqrt{np}\,
\end{equation}
and \eqref{eq:new12} yields
\begin{equation}\label{eq:trB1}
\E[|1_n'\bfB_k 1_n|^2] \lesssim  \frac{n^3}{p}\,.
\end{equation}
Then a combination of \eqref{eq:trB} and \eqref{eq:trB1} yields
\begin{equation}\label{eq:A2}
\frac{n}{p^2} \sum_{k=1}^p \E[A_{2k}]\lesssim  \frac{n^4}{p^3}  \sum_{k=1}^p |\E[Y_{k1}Y_{k2}]|^2 \le \bigg( \frac{n^2}{p^{3/2}}\sum_{k=1}^p |\E[Y_{k1}Y_{k2}]| \bigg)^2  \to 0\,,\qquad \pto\,,
\end{equation}
where assumption \eqref{eq_mom_cond_unit_sphere} was used for the last step.
Regarding $A_{1k}$, one has 
\begin{equation*}
A_{1k} \lesssim 
 \, \E_k\bigg[ \Big|\sum_{1\le i\neq j\le n} b_{ij}  Y_{ki}Y_{kj}\Big|^2 \bigg]  + \, \E_k\bigg[ \Big|\sum_{i=1}^n b_{ii}  (Y_{ki}^2-n^{-1})\Big|^2 \bigg]
=: A_{1k1}+A_{1k2}\,.
\end{equation*} 

Using $\Y_{-k}\Y_{-k}'=(\Y_{-k}\Y_{-k}'-q\bfI_{p-1})+q \bfI_{p-1}$, \eqref{eq:le1} and $|q|\to 1$, we also have 
\begin{equation*}
\begin{split}
\tr(\bfB_k\bfB_k^*)&= \tr\big( (\Y_{-k}\Y_{-k}'-q\bfI_{p-1})^{-1} \Y_{-k}\Y_{-k}'  (\Y_{-k}\Y_{-k}'-\overline{q} \bfI_{p-1})^{-1} \Y_{-k}\Y_{-k}'   \big)\\
&= \tr \big( (\bfI_{p-1} +q (\Y_{-k}\Y_{-k}'-q\bfI_{p-1})^{-1})  (\bfI_{p-1} +\overline{q} (\Y_{-k}\Y_{-k}'-\overline{q}\bfI_{p-1})^{-1})\big)\\
&= \tr(\bfI_{p-1}) +q \tr\big( (\Y_{-k}\Y_{-k}'-q\bfI_{p-1})^{-1} \big) +\overline{q} \tr\big( (\Y_{-k}\Y_{-k}'-\overline{q}\bfI_{p-1})^{-1}) \big)+\\
&\qquad |q|^2 \tr\big( (\Y_{-k}\Y_{-k}'-q\bfI_{p-1})^{-1} (\Y_{-k}\Y_{-k}'-\overline{q}\bfI_{p-1})^{-1})\big)\\
&\le p+ 2 |q|\, p \,\twonorm{(\Y_{-k}\Y_{-k}'-q\bfI_{p-1})^{-1}} +|q|^2 p \twonorm{(\Y_{-k}\Y_{-k}'-q\bfI_{p-1})^{-1}}^2\\
&\lesssim p +\frac{p}{\Im(q)}+\frac{p}{(\Im(q))^2}\lesssim  n. 
\end{split}
\end{equation*}
Then after expanding the expression for $A_{1k1}$, we conclude from the above inequality  
that 
\begin{equation}\label{eq:agesags}
\E[A_{1k1}]\lesssim  \E[Y_{k1}^2Y_{k2}^2] \E \bigg[\sum_{i, j=1}^n |b_{ij}|^2\bigg] \lesssim \frac{1}{n^2} \E[\tr(\bfB_k\bfB_k^*)]\lesssim  \frac{1}{n}.
\end{equation}
Recall that $\E[Y_{k1}^2]=n^{-1}$. For $A_{1k2}$, we have, as $\pto$,
\begin{equation*}
\begin{split}
\E[A_{1k2}]& \leq \sum_{1\le i\neq j\le n} \E[b_{ii}\overline{b}_{jj}] (\E[Y_{ki}^2 Y_{kj}^2]-n^{-2})+  \sum_{i=1}^n \E[|b_{ii}|^2] \E[Y_{ki}^4]\\
&\lesssim (\E[Y_{k1}^4] n^{-1} \E[|\tr(\bfB_k)|^2]+\E[Y_{k1}^4] \E[\tr(\bfB_k\bfB_k^*)] )\,,
\end{split}
\end{equation*}
where we used 
\begin{equation*}
|\E[Y_{k1}^2 Y_{k2}^2] -n^{-2}| = \Big| \frac{1}{n^2 (n-1)} - \frac{\E[Y_{k1}^4]}{n-1}\Big| \lesssim  \E[Y_{k1}^4] n^{-1} \,.
\end{equation*}
Therefore, one deduces using \eqref{eq_mom_cond_unit_sphere}
\begin{equation*}
\frac{n}{p^2} \sum_{k=1}^p \E[A_{1k}] \lesssim    \frac{n}{p^2} \sum_{k=1}^p \big( \E[A_{1k1}]+\E[A_{1k2}]\big)\lesssim
\frac{1}{p} +\frac{n^2}{p^2} \sum_{k=1}^p \E[Y_{k1}^4] \to 0\,,\quad \pto\,.
\end{equation*}
Combining this with \eqref{eq:A2} and \eqref{eq:defA} shows that
\begin{equation*}
J_{21}\lesssim \frac{n}{p^2} \sum_{k=1}^p \E[\E_k\big[|\vep_k-\E_k[\vep_k]|^2\big]]\le \frac{n}{p^2} \sum_{k=1}^p (\E[A_{1k}]+\E[A_{2k}])\to 0\,, \quad \pto\,.
\end{equation*}
Regarding $J_{22}$, we have
\begin{equation*}
\E_k [\vep_k] -\E[\vep_k]= \Big(\frac{1}{n} -\E[Y_{k1} Y_{k2} ]\Big) \big(\tr(\bfB_k) -\E[\tr(\bfB_k)]\big) + \E[Y_{k1} Y_{k2}] 1_n' (\bfB_k-\E[\bfB_k]) 1_n\,.
\end{equation*}
For the same arguments as \eqref{eq:trB1}, we obtain that 
\begin{align}\label{eq:trB112}
\frac{n}{p^2} \sum_{k=1}^p |\E[Y_{k1} Y_{k2} ]|^2 \E[|1_n'(\bfB_k-\E[\bfB_k]) 1_n|^2] &\lesssim \frac{n^4}{p^3} \sum_{k=1}^p |\E[Y_{k1} Y_{k2} ]|^2 \nonumber \\
&\le \bigg( \frac{n^2}{p^{3/2}}\sum_{k=1}^p |\E[Y_{k1}Y_{k2}]| \bigg)^2 \to 0\,, \qquad \pto.
\end{align}
Furthermore, we see that
\begin{equation*}
\begin{split}
\frac{1}{n^2} \E\big[| \tr( \bfB_k)- \E[\tr(\bfB_k)]|^2\big]
&= \frac{|q|^2}{n^2} \E\big[ \big| \tr((\Y_{-k}\Y_{-k}'-q\bfI_{p-1})^{-1})- \E[\tr((\Y_{-k}\Y_{-k}'-q\bfI_{p-1})^{-1})]   \big|^2 \big]\\
&= |q|^2 \E\Big[ \Big| s_{\Y_{-k}'\Y_{-k}}(q)- \E\big[s_{\Y_{-k}'\Y_{-k}}(q)\big] \Big|^2 \Big]\,,
\end{split}
\end{equation*}
where \eqref{eq:sgdsglop} was used for the last identity.
As in the proof of Lemma~\ref{lem:expectedtransform}, 
let $\mathcal{F}_j$ denote the filtration generated by $\y_i,1\le i\le j, i\neq k$, and we define $\mathcal{F}_ 0 = \{ \emptyset \}$.
Then one can show analogously to \eqref{eq:martdiff} that
\begin{equation}\label{eq:martdiff1}
s_{\Y_{-k}'\Y_{-k}}(q)- \E\big[s_{\Y_{-k}'\Y_{-k}}(q)\big]= \sum_{j=1;j\neq k}^p \left(  \E[s_{\Y_{-k}'\Y_{-k}}(q)| \mathcal{F}_j]-\E[s_{\Y_{-k}'\Y_{-k}}(q)|  \mathcal{F}_{j-1}] \right)
=:\sum_{j=1;j\neq k}^p W_{j,k}\,
\end{equation}
is a sum of bounded martingale differences. Indeed, with the same arguments as in the proof of Lemma~\ref{lem:expectedtransform} we obtain $|W_{j,k}|\lesssim  n^{-1} \gn^{-1/2}$. An application of the extended Burkholder inequality (Lemma 2.12 in \cite{bai:silverstein:2010}) yields
\begin{equation}\label{eq:dfsdfexawkl}
\E\bigg[ \Big| \sum_{j=1;j\neq k}^p W_{j,k}\Big|^2 \bigg]\lesssim \,  \E\bigg[ \sum_{j=1 ; j\neq k}^p |W_{j,k}|^2 \bigg] \lesssim \frac{1}{n}\,,
\end{equation}
which combined with \eqref{eq:trB112} shows that $J_{22} \to 0$. 
The proof of \eqref{eq:grsgrsd} is complete.
\end{proof}

Next, we turn to the proof of Theorem \ref{thm_unit_sphere} part (2), where it is assumed that $p/n\to \gamma>0$.

\begin{lemma} \label{lem_mp}
    Assume $\gn \to \gamma > 0$ and let $z\in\mathbb{C}^+$. Then, as $\pto$,  it holds
    $s_{\bfR}(z) \to s_{F_{\gamma}}(z)$ almost surely, where the Stieltjes transform $s_{F_{\gamma}}$ of $F_{\gamma}$ is defined in \eqref{eq_stieltjes_mp}.
\end{lemma}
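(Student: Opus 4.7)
The plan is to adapt the two-step strategy used in the $\gamma = 0$ regime: first establish the concentration statement
\[
s_{\bfR}(z) - \E[s_{\bfR}(z)] \;\stas\; 0 , \qquad \pto,
\]
and then identify the limit of $\E[s_{\bfR}(z)]$ via the fixed-point formula \eqref{eq:sols1}. The key simplification relative to Lemmas~\ref{lem:expectedtransform}--\ref{lem:convexp} is that now we evaluate at $q = z$ rather than $q = 1 + \gn^{1/2} z$, so $\Im(q) = \Im(z)$ stays bounded away from $0$, and no rescaling by $\gn^{-1/2}$ is required.

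For concentration, I would reuse the martingale decomposition
\[
s_{\bfR}(z) - \E[s_{\bfR}(z)] = \sum_{k=1}^{p} \bigl\{ \E[s_{\bfR}(z)\mid \mathcal{F}_k] - \E[s_{\bfR}(z)\mid \mathcal{F}_{k-1}] \bigr\},
\]
where $\mathcal{F}_k = \sigma(\mathbf{y}_1, \ldots, \mathbf{y}_k)$. Using Lemma~2.6 of \cite{silverstein:bai:1995}, each increment is bounded in modulus by $C / (p\, \Im(z))$. An application of Azuma's inequality then yields $\P(|s_{\bfR}(z) - \E[s_{\bfR}(z)]| > \vep) \lesssim \exp(-c\, \vep^2 p)$, and Borel--Cantelli delivers the almost sure convergence. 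This step is strictly easier than Lemma~\ref{lem:expectedtransform} because no extra factor of $\gn^{-1/2}$ inflates the increments.

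For the deterministic limit, I would invoke \eqref{eq:sols1} with $q = z$, which gives
\[
\E[s_{\bfR}(z)] = \frac{1}{2\gn z}\Bigl(1 - z - \gn + \gn z\, \delta_p + \sqrt{(1 - z - \gn - \gn z\, \delta_p)^2 - 4\gn z}\Bigr),
\]
with the branch of the square root chosen so that $\Im \E[s_{\bfR}(z)] > 0$. If I can show $\delta_p \to 0$, then sending $\pto$ and using $\gn \to \gamma$ yields precisely $s_{F_\gamma}(z)$ from \eqref{eq_stieltjes_mp}. To verify $\delta_p \to 0$, I would rerun the decomposition $\delta_p = J_1 + J_2$ in \eqref{eq_def_J}. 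Since $\Im(q)$ is now bounded away from zero, the bound \eqref{eq:sgfedsgf} simplifies to $|\E[\vep_k]| \lesssim n^{-1} + |\E[Y_{k1} Y_{k2}]|\, n^{3/2}/p^{1/2}$ (using $n\asymp p$), and the estimates \eqref{eq:A2}, \eqref{eq:agesags}, \eqref{eq:trB112}, \eqref{eq:dfsdfexawkl} carry over almost verbatim with only harmless changes of constants. Assumption \eqref{eq_mom_cond_unit_sphere} directly controls all resulting error terms, because in the regime $p/n \to \gamma > 0$ the ratio $p^{1/2}/n$ tends to zero, so for instance $\frac{n}{p}\sum_k|\E[Y_{k1}Y_{k2}]| = \frac{p^{1/2}}{n} \cdot \frac{n^2}{p^{3/2}}\sum_k|\E[Y_{k1}Y_{k2}]| = o(1)$.

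The main obstacle, in my view, is purely book-keeping: one must carefully track which bounds from the proof of Lemma~\ref{lem:convexp} used $|q| \to 1$ or $\Im(q) \sim \gn^{1/2}$ and retune them for $q = z$ with $\gn \to \gamma > 0$. No genuinely new analytic input is needed, and the moment hypotheses~\eqref{eq_mom_cond_unit_sphere} already capture the worst of these terms.
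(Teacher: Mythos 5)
Your proposal is correct and follows essentially the same route as the paper: concentration of $s_{\bfR}(z)$ around its mean by a standard martingale argument (the paper cites Burkholder via Bai--Zhou where you use Azuma, an interchangeable choice), followed by reuse of the fixed-point identity \eqref{eq:sols1} and the $\delta_p\to 0$ estimates from the proof of Lemma~\ref{lem:convexp}, which indeed only become easier since $\Im(q)$ is bounded away from zero and $n\asymp p$. The only cosmetic difference is that you evaluate at $q=z$ directly, whereas the paper works at $q=1+\gamma^{1/2}z$ and invokes Lipschitz continuity of $s_{\bfR}$; both are fine.
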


\begin{proof}
Let $z\in \mathbb{C}^+.$
Using Burkholder's inequality, it is straightforward to show that 
$s_{\bfR} (z) - \E [ s_{\bfR} (z) ] \to 0$ almost surely (see, e.g., \cite[Proof of Theorem 1.1, Step 1]{bai:zhou:2008}). Thus, it is left to investigate the asymptotic behavior of the non-random part $\E[s_{\bfR}(z)].$ 
    For this purpose, we will show that 
\begin{align} \label{aim_mp}
    \lim_{\pto} \E\big[s_{\bfR}(1+\gamma^{1/2} z)\big] 
    = s_{F_\gamma} ( 1 + \gamma ^{1/2} z), \quad z\in \mathbb{C}^+.
\end{align}
Consider \eqref{eq:sols1}. Whenever $\delta_p \to 0,$ we have that $\E[s_{\bfR}( 1 + \gamma ^{1/2} z)]$ converges to $s_{F_{\gamma}}(1+\gamma^{1/2}z).$ In this case, the Lipschitz continuity of $s_{\bfR}$ implies \eqref{aim_mp}. Note that in the case $\gamma>0$, the proof of assertion $\delta_p \to 0$ is identical to the proof of \eqref{eq:grsgrsd}, which establishes the desired result. 
\end{proof}

\subsection{Proof of Theorem~\ref{thm:spearman}} \label{sec_proof_rho}
In this section, we provide a proof for the limiting spectral distribution of Spearman's rho matrix $\SR$ given in Theorem \ref{thm:spearman}. The proof is an application of Theorem \ref{thm_unit_sphere} and relies on verifying the moment conditions in \eqref{eq_mom_cond_unit_sphere}.  To the best of our knowledge, such moments have not been considered in the literature for non-continuous entry distributions of $\X$ and our more general definition of $\bfZ$. It is worth mentioning that the results of \cite[p.~429]{bai:zhou:2008} apply in the continuous case only. 

\begin{proof} 
We want to verify the conditions of Theorem \ref{thm_unit_sphere}. 
Using $\sum_{j=1}^n Z_{ij} = 0$ and $\sum_{j=1}^n Z_{ij}^2 = 1$ for $1 \leq i \leq p$, it is easy to check that $\E[Z_{i1}]=0$ and
\begin{align*}
 \frac{n^2}{p^{3/2}} \sum_{i=1}^p \E [ Z_{i1} Z_{i2} ]=\frac{n^2}{p^{3/2}}  \cdot \frac{-p}{n(n-1)} =o(1)\,, \qquad \pto\,.
\end{align*}
The difficult part is to establish the analogue to the first condition in \eqref{eq_mom_cond_unit_sphere}, that is,
\begin{align} \label{4thmoment_z}
\frac{n^2}{p^2} \sum_{i=1}^p \E[Z_{i1}^4] =o(1)\,, \qquad \pto\,,
\end{align}
which will be proven in Subsection~\ref{lem:difficult} below.
Therefore, the assertion of Theorem \ref{thm:spearman} follows from Theorem \ref{thm_unit_sphere}. 
\end{proof}

\subsubsection{Proof of \eqref{4thmoment_z}} \label{lem:difficult}
Set $M_{ij} := Q_{ij} - (n+1)/2$ for $1 \leq j \leq n, 1 \leq i \leq p.$ Since $Z_{i1}=M_{i1}/\sqrt{\sum_{j=1}^n M_{ij}^2}$, we start by analyzing $\sum_{j=1}^n M_{ij}^2$.
If $X_{i1}$ follows a continuous distribution, we get $\{Q_{i1},\ldots, Q_{in}\}=\{1,\ldots, n\}$ \as\ and as a consequence
\begin{align*}
    \sum_{j=1}^n M_{ij}^2 = \frac{n (n^2 -1)}{12} 
    \sim \frac{n^3}{12} \quad \as 
\end{align*}
Our aim is to show that, for all non-degenerate distributions of $X_{i1}$, $\sum_{j=1}^n M_{ij}^2$ is at least of order $n^3$ with high probability.

By Assumption \ref{ass_asympt_non_degen}, there exists some small $\eta>0$ such that for all real-valued sequences
$(t_{i,p})_{p\in\N}$ 
\begin{align}\label{eq_asympt_non_degen11}
    \limsup_{p\to\infty} \max_{1\le i\le p} \P \lb X_{i1} = t_{i,p} \rb \le 1-\eta < 1. 
\end{align}
Let $1 \leq i \leq p$. For ease of presentation, we assume\footnote{Otherwise, all arguments are valid in the limit $\pto$.} $\P ( X_{i1} = t ) \le 1-\eta$ for all $t\in \R$. This implies $\P(X_{i1}<t)+\P(X_{i1}>t)\ge \eta$ all $t\in \R$. We deduce that there exists a $t_i\in \R$ such that at least one of (i) $\min\big( \P(X_{i1}\le t_i),\P(X_{i1}>t_i)\big)\ge \eta/2$, or (ii) $\min\big( \P(X_{i1}< t_i),\P(X_{i1}\ge t_i)\big)\ge \eta/2$ must hold.  We restrict ourselves to (i), as the arguments under (ii) are almost identical.

The following result shows that $T_i:=\sum_{j=1}^n M_{ij}^2$ decreases if we add a tied observation.
\begin{lemma}\label{lem_rank}
 Let $1 \leq i \leq p$.   Assume $|\{ X_{i1}, \ldots, X_{in} \} | = k \geq 3$ and write $\{ X_{i1}, \ldots, X_{in} \} = \{ V_{i1}, \ldots, V_{ik}\}$, where $V_{i1}<\cdots<V_{ik}$. For $l=2, \ldots, k-1$, we write $T_{l,+}$ and $T_{l,-}$ for a modification of $T_i$, where all $X_{ij}$ taking on the value $V_{il}$ are replaced by $V_{i,l+1} $ and $V_{i,l-1}$, respectively. Then, it holds $T_{l,+} \leq T_i$ and $T_{l,-}  \leq T_i$. 
\end{lemma}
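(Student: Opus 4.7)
The plan is to reparametrize $T_i$ in terms of the distinct values and their multiplicities, and then observe that both operations $T_{l,+}$ and $T_{l,-}$ amount to merging two adjacent classes into one. Let $n_l = \#\{ j : X_{ij} = V_{il}\}$ and $N_l = n_1 + \cdots + n_l$ (with $N_0 = 0$). From the definition of the fractional rank in the excerpt, all observations with value $V_{il}$ share the common rank
\begin{align*}
r_l \;=\; N_{l-1} + \tfrac{n_l+1}{2} \;=\; \tfrac{N_{l-1} + N_l + 1}{2},
\end{align*}
so that
$T_i = \sum_{l=1}^{k} n_l \bigl( r_l - \tfrac{n+1}{2}\bigr)^2$.

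Next, I would describe the effect of the two operations. For $T_{l,+}$, the levels $l$ and $l+1$ get merged into a single class of size $n_l + n_{l+1}$ with common value $V_{i,l+1}$, while all other ranks are preserved. The new rank of the merged class is $r_l' = \tfrac{N_{l-1} + N_{l+1} + 1}{2}$, and a short expansion using $N_{l+1} = N_{l-1} + n_l + n_{l+1}$ shows the key identity
\begin{align*}
(n_l + n_{l+1})\, r_l' \;=\; n_l\, r_l \;+\; n_{l+1}\, r_{l+1},
\end{align*}
i.e.\ $r_l'$ is the $n_l$-$n_{l+1}$ convex combination of $r_l$ and $r_{l+1}$.

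Applying Jensen's inequality to the convex map $x \mapsto (x - \tfrac{n+1}{2})^2$ then gives
\begin{align*}
(n_l + n_{l+1}) \lb r_l' - \tfrac{n+1}{2}\rb^{\!2}
\;\le\; n_l \lb r_l - \tfrac{n+1}{2}\rb^{\!2} + n_{l+1}\lb r_{l+1} - \tfrac{n+1}{2}\rb^{\!2}.
\end{align*}
Since $T_{l,+}$ differs from $T_i$ only by replacing the right-hand side with the left, this yields $T_{l,+}\le T_i$. The inequality $T_{l,-} \le T_i$ is proved by the same argument with levels $l-1$ and $l$ merged into a single class of value $V_{i,l-1}$, the new rank again being a convex combination of $r_{l-1}$ and $r_l$.

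I do not expect any serious obstacle: the only point that requires a small computation is the identity expressing $r_l'$ as a convex combination of the original adjacent ranks, and once this is in place the lemma reduces to a one-line application of convexity.
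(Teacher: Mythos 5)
Your proof is correct and matches the paper's intended argument: the paper omits the proof but points to exactly the inequality $m(\frac{1}{m}\sum_i a_i)^2 \le \sum_i a_i^2$, which is the Jensen/quadratic-mean step you apply after observing that the merged class's fractional rank is the multiplicity-weighted average of the two original ranks. Your computation of $r_l$ and the identity $(n_l+n_{l+1})r_l' = n_l r_l + n_{l+1} r_{l+1}$ check out, so this is essentially the same proof with the details filled in.
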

For the sake of brevity, we omit the proof of Lemma \ref{lem_rank}, which is a simple application of the following consequence of Hölder's inequality: Let $a_1, \ldots, a_m \in \R ,$ then it holds
\begin{align*}
    m \lb \frac{1}{m} \sum_{i=1}^m a_i \rb^2 \leq \sum\limits_{i=1}^m a_i^2. 
\end{align*}
We turn back to the proof of \eqref{4thmoment_z}.
We apply  Lemma \ref{lem_rank} multiple times, in such a way that finally all (original) $X_{ij}\le t_i$ are set to the (original) $V_{i1}$, and all (original) $X_{ij}> t_i$ are set to the (original) $V_{ik}$.
Therefore, we may restrict ourselves to the case $k=2$ and assume $V_{i1}<V_{i2}$. (Note that $k=1$ constitutes the degenerated case.)  
Let $m_{il} =   | \{ j : X_{ij} = V_{il} \} | $ 
for $l \in \{1,2\}$ and let $r_{il}$ be the rank of observations taking on the value $V_{il}.$  
Note that 
\begin{align*}
    r_{i1} = \frac{1}{m_{i1}} \sum\limits_{a=1}^{m_{i1}} a  = \frac{m_{i1} + 1}{2}, \quad 
    r_{i2} = \frac{1}{m_{i2}} \sum_{a=m_{i1} + 1}^{m_{i1} + m_{i2}} a = m_{i1} + \frac{m_{i2} + 1}{2}, 
\end{align*}
which implies 
\begin{align*} 
    T_i & \geq  m_{i1} \lb r_{i1} - \frac{n + 1}{2} \rb ^2
    + m_{i2} \lb r_{i2} - \frac{n + 1}{2} \rb ^2 
    = m_{i1} \lb \frac{m_{i1} - n}{2} \rb^2 
    + m_{i2} \lb \frac{m_{i1}}{2} \rb^2 
		=\frac{1}{4} n \, m_{i1}\, m_{i2}.
\end{align*}

Choose $\varepsilon=\eta/4$ and consider the event $A_n=\{\min(m_{i1}, m_{i2})\ge n\varepsilon\}$. In fact, this event occurs with high probability, which we will show in the following.
By construction, we have $m_{i1}\sim \text{Bin}(n,q_i)$ and $m_{i2}=n-m_{i1}\sim \text{Bin}(n,1-q_i)$, where $q_i=\P(X_{i1}\le t_i)$. 
We then get
\begin{align*}
   \P(A_n^c)= \P \lb \min(m_{i1}, m_{i2}) < n \varepsilon \rb
    \leq \P \lb m_{i1} < n \varepsilon \rb 
    + \P \lb m_{i2} < n \varepsilon \rb,
\end{align*}
and by Hoeffding's inequality 
\begin{align}  
    \P\lb m_{i1} < n \varepsilon \rb
    &= \P  \lb - \sum\limits_{j=1}^n (\1(X_{ij}\le t_i) - q_i) > n (q_i - \varepsilon) \rb \notag\\
		&\leq \exp \lb - 2 n ( q_i - \varepsilon)^2 \rb \le \exp \lb - 2 n \,(\eta/4)^2 \rb\,, \label{prob_hoeffding}
\end{align}
where $q_i\ge \eta/2$ was used for the last step.
A similar bound can be derived for $\P \lb m_{i2} < n \varepsilon \rb$. 
Thus, we have 
\begin{align*}
    \E [ Z_{i1}^4 ] 
    & = 
    \E \left[ Z_{i1}^ 4 \1_{A_n} \right] 
    + \E \left[ Z_{i1}^ 4 \1_{A_n^c} \right]
     \leq
    \E \left[ \lb \frac{M_{i1}}{\sqrt{T_i}}\rb^ 4 \1_{A_n} \right]
    + \P
    \lb A_n^c \rb
    = O(n^{-2}),
\end{align*}
where we used $|M_{i1}|\leq (n-1)/2$ and the fact that $T_i\ge \frac{\vep^2}{4} n^3$ on $A_n$ for the first summand, and \eqref{prob_hoeffding} and Assumption \ref{ass_asympt_non_degen} for the second summand. This implies \eqref{4thmoment_z}.

\section{Proof of Theorem~\ref{thm:kendall}} \label{sec_proof_kendall} 
This section is devoted to the proof of Theorem~\ref{thm:kendall}, which is divided into several steps. In Section~\ref{sec_structure_kendall_proof}, we outline the structure of the proof based on the preparations conducted in Sections~\ref{sec_proof_kendall_sec1} and \ref{sec_proof_kendall_sec2}.
The main claims are proven in Sections~\ref{sec_proof_kendall_claim1}, \ref{sec_proof_kendall_claim2} and \ref{sec_proof_kendall_claim3}.

\subsection{Some useful notations} \label{sec_proof_kendall_sec1}
W.l.o.g. we may that assume that the random variables $X_{it}$ are centered and standardized for $1 \leq i \leq p, ~ 1 \leq t \leq n$. 
First, we introduce some notations. Setting 
\begin{align*}
    \bfM & := \frac{2}{n (n -1) }   \sum_{1\le s<t\le n} \offdiag\lb \sign(\mathbf{q}_s-\mathbf{q}_t) (\sign(\mathbf{q}_s-\mathbf{q}_t))' \rb \\
   &  = 
    \frac{2}{n (n -1) }   \sum_{1\le s<t\le n} \offdiag\lb \sign(\x_s-\x_t) (\sign(\x_s-\x_t))' \rb\,,
\end{align*}
we have
\begin{align} \label{rep_t_m}
    \bfT = \bfD^{-1/2} \bfM  \bfD^{-1/2}  .
\end{align}
It will be crucial to approximate the matrix $\bfD$ by $\tilde\bfD, $ which is formally justified below. To this end, we rewrite the entries of $\bfD$ as 
\begin{align}
   D_{ii} & = \frac{3}{n}\sum_{s=1}^n \hat U_{is}^2 , \quad \text{ where }
   \hat U_{is}  = 2\hat{H}_i(X_{is}) - 1 - \hat q_i(X_{is}), \label{eq_formula_D}\\ 
   \hat H_i(x) & = \frac{1}{n} \sum_{k=1}^n \1 \{ X_{ik} \leq x \}, \quad
   \hat q_i(x)  = \frac{1}{n} \sum_{k=1}^n \1 \{ X_{ik} = x \}, \nonumber
\end{align}
for $1 \leq i \leq p, 1 \leq s \leq n$, $x\in\R$. 
Furthermore, for $1 \leq i \leq p$, let $H_i$ be the c.d.f.~of $X_{i1}$ and $\P (X_{i1}=x) = q_i(x)$, $x\in\R$. Now we define $\tilde\bfD$ as the $p\times p$ diagonal matrix with diagonal elements 
  \begin{align} \label{def_tildeD}
        \tilde D_{ii} = \frac{3}{n} \sum_{s=1}^n \lb 2 H_i(X_{is})-1 - q_i(X_{is}) \rb ^2, \qquad 1 \leq i \leq p,
    \end{align}

\subsection{Analysis of $\bfM$} \label{sec_proof_kendall_sec2}

 For $1\le s,t\le n$ and $s\neq t$, let $\bfB_{(s,t)}=\sign(\x_s-\x_t)$ and 
consider the decomposition
\begin{equation}\label{eq:decA}
\bfB_{(s,t)}=\bfA_{(s,t)}+ \bfA_{(s,\cdot)}+ \bfA_{(\cdot,t)}\,,
\end{equation}
where 
$\bfA_{(s,\cdot)}= \E[\bfB_{(s,t)}|\x_s]\,,   \bfA_{(\cdot,t)}=\E[\bfB_{(s,t)}|\x_t]$ and $\bfA_{(s,t)}=\bfB_{(s,t)}-\bfA_{(s,\cdot)}- \bfA_{(\cdot,t)}$.
Observe that 
\begin{equation} \label{def_B}
\bfM = \frac{2}{n(n-1) }   \sum_{1\le s<t\le n} \offdiag \lb \bfB_{(s,t)} (\bfB_{(s,t)})' \rb \,.
\end{equation}
Next, we decompose $\offdiag(\bfB_{(s,t)} (\bfB_{(s,t)})')$.
Using \eqref{eq:decA}, we have 
\begin{equation*}
\offdiag(\bfB_{(s,t)} (\bfB_{(s,t)})')  = \bfM_{(s,t)}^{(1)}+\bfM_{(s,t)}^{(2)}+(\bfM_{(s,t)}^{(2)})'+\bfM_{(s,t)}^{(3)}\,,
\end{equation*}
where
\begin{equation*}
\begin{split}
\bfM_{(s,t)}^{(1)}&= \offdiag((\bfA_{(s,\cdot)}+\bfA_{(\cdot, t)}) (\bfA_{(s,\cdot)}+\bfA_{(\cdot, t)})') , \\
\bfM_{(s,t)}^{(2)}&= \offdiag( \bfA_{(s,t)} (\bfA_{(s,\cdot)}+\bfA_{(\cdot, t)})'),  \\
\bfM_{(s,t)}^{(3)}&= \offdiag( \bfA_{(s,t)} (\bfA_{(s,t)})')\,.
\end{split}
\end{equation*}
Thus combining \eqref{def_B} with the definition
\begin{equation*}
\bfM^{(k)} := \frac{2}{n (n-1)} \sum_{1\le s<t\le n} \bfM_{(s,t)}^{(k)}  \,, \qquad k=1,2,3\,,
\end{equation*}
we have $\bfM= \bfM^{(1)}+\bfM^{(2)}+(\bfM^{(2)})'+\bfM^{(3)}$.

Next, we write $\bfM^{(1)}$ as 
\begin{equation} \label{m1_rep}
\begin{split}
\bfM^{(1)} &= \frac{2}{n(n-1) }  \sum_{1\le s<t\le n} \offdiag( (\bfu_s-\bfu_t)(\bfu_s-\bfu_t)')  , \\
\end{split}
\end{equation}
where we used the notation 
\begin{equation} 
\bfu_s:=(U_{1s},\ldots, U_{ps})':=\bfA_{(s,\cdot)}=-\bfA_{(\cdot,s)}\,\qquad 1\le s\le n\,.
\end{equation} 
Here, the last equality follows directly from the definition. In particular, for $1\le i\le p$ and $t\neq s$, we have 
\begin{equation} \label{x1}
\begin{split}
U_{is} = \E[\sign(X_{is}-X_{it})| \x_s]&= \P(X_{is}>X_{it}| \x_s)-\P(X_{is}<X_{it}| \x_s)\\
&= 2 H_i(X_{is})-1 - q_i(X_{is}).
\end{split}
\end{equation}
Note that $\bfu_1, \ldots, \bfu_n$ are i.i.d. with independent and centered components. 
Using \eqref{m1_rep}, we may decompose $\bfM^{(1)}$ further as 
\begin{align}
    \bfM^{(1)} 
     &= \frac{2}{n }  \sum_{s=1}^n \ \offdiag( \bfu_s \bfu_s') 
     - \frac{2}{n(n-1)} \sum_{1\le s\neq t\le n}  \offdiag( \bfu_s \bfu_t') 
      \nonumber \\
    & = \frac{2}{n  } \sum\limits_{s=1}^n  \bfu_s \bfu_s' 
    - \frac{2}{n } \sum\limits_{s=1}^n \diag \lb \bfu_s \bfu_s' \rb 
    - \frac{2}{n(n-1) } \sum_{1\le s\neq t\le n}  \offdiag( \bfu_s \bfu_t')  \nonumber \\ 
    & = \bfM^{(1,1)} + \bfM^{(4)},
    \label{decomp_M1}
\end{align}
where 
\begin{align*}
    \bfM^{(1,1)} 
    & := \frac{2}{n } \sum\limits_{s=1}^n  \bfu_s \bfu_s'- \frac{2}{n  } \sum\limits_{s=1}^n  \diag \lb \bfu_s \bfu_s' \rb=
     \frac{2}{n } \sum\limits_{s=1}^n  \bfu_s \bfu_s'- \frac{2}{3}  \tilde\bfD , \\ 
    \bfM^{(4)}  &:= - \frac{2}{n(n-1) } \sum_{1\le s\neq t\le n} \offdiag( \bfu_s \bfu_t')\,.  
\end{align*}
Altogether, we have derived the following decomposition of $\bfM$, 
\begin{align}  \label{decomp_Ttilde}
\bfM= \bfM^{(1,1)}+\bfM^{(2)}+(\bfM^{(2)})'+\bfM^{(3)}+ \bfM^{(4)}.
\end{align}

\subsection{Structure of the proof} \label{sec_structure_kendall_proof}
The proof of Theorem~\ref{thm:kendall} now consists of the following three claims, which will be shown in the remainder of this section.
\begin{itemize}
\item[$(i)$] 
If $p/n\to \gamma\in [0,\infty)$, the LSDs of $\sqrt{n/p}\,\bfT$ and $\sqrt{n/p}\,\tilde \bfD^{-1/2} \bfM  \tilde \bfD^{-1/2}$ coincide. 
\item[$(ii)$] 
If $p/n\to \gamma\in [0,\infty)$, the LSDs of $\sqrt{n/p}\,\tilde \bfD^{-1/2} \bfM  \tilde \bfD^{-1/2}$ and $\sqrt{n/p}\,\tilde \bfD^{-1/2} \bfM^{(1,1)}  \tilde \bfD^{-1/2}$ coincide. 
\item[$(iii)$] If $p/n\to \gamma\in (0,\infty)$, the ESDs of $\tilde \bfD^{-1/2} \bfM^{(1,1)}  \tilde \bfD^{-1/2}$ converge in probability to the distribution of $\tfrac{2}{3} (\eta-1)$, where $\eta$ follows the \MP law with parameter $\gamma$. \\
If $p/n\to 0$, the ESDs of $\sqrt{n/p}\,\tilde \bfD^{-1/2} \bfM^{(1,1)}  \tilde \bfD^{-1/2}$ converge in probability to the distribution of $\frac{2}{3}\zeta$, where $\zeta$ follows the semicircle law. 
\end{itemize}

\subsection{Proof of claim $(i)$} \label{sec_proof_kendall_claim1}
To prove claim $(i),$ we first give some auxiliary results. The next lemma regards an approximation of $\bfD$  in \eqref{eq_def_D} by the diagonal matrix $\tilde\bfD$ in \eqref{def_tildeD}. 
\begin{lemma}
    \label{lem_dhat}
    It holds 
    \begin{align*}
      \frac{n}{p^2} \| \bfD - \tilde{\bfD} \|_F^2 = o_{\mathbb{P}}(1), \qquad p\to\infty. 
    \end{align*}
\end{lemma}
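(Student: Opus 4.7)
The plan is to reduce the claim to a variance bound for the empirical CDF and empirical PMF evaluated at the data points themselves, after exploiting the boundedness of the quantities in \eqref{eq_formula_D} and \eqref{def_tildeD}. Write $U_{is}:=2H_i(X_{is})-1-q_i(X_{is})$, so that $\tilde D_{ii}=\tfrac{3}{n}\sum_s U_{is}^2$ and $D_{ii}=\tfrac{3}{n}\sum_s \hat U_{is}^2$. Since $\hat H_i,H_i,\hat q_i,q_i\in[0,1]$, both $|U_{is}|$ and $|\hat U_{is}|$ are bounded by $2$, hence
\begin{equation*}
|D_{ii}-\tilde D_{ii}|\;\le\;\tfrac{3}{n}\sum_{s=1}^n|\hat U_{is}-U_{is}|\cdot|\hat U_{is}+U_{is}|\;\le\;\tfrac{12}{n}\sum_{s=1}^n|\hat U_{is}-U_{is}|.
\end{equation*}
Applying the Cauchy--Schwarz inequality in $s$ yields the key deterministic reduction
\begin{equation*}
(D_{ii}-\tilde D_{ii})^2\;\le\;\frac{144}{n}\sum_{s=1}^n(\hat U_{is}-U_{is})^2,
\end{equation*}
and therefore
\begin{equation*}
\tfrac{1}{p}\|\bfD-\tilde\bfD\|_F^2\;\le\;\frac{144}{pn}\sum_{i=1}^p\sum_{s=1}^n(\hat U_{is}-U_{is})^2.
\end{equation*}

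The main step is to show $\E\bigl[(\hat U_{is}-U_{is})^2\bigr]=O(1/n)$, uniformly in $i$ and $s$. Since $\hat U_{is}-U_{is}=2(\hat H_i(X_{is})-H_i(X_{is}))-(\hat q_i(X_{is})-q_i(X_{is}))$, it suffices to bound the two pieces separately. The idea is to condition on $X_{is}$ and isolate the self-contribution $k=s$ in the sum defining $\hat H_i$. Concretely,
\begin{equation*}
\hat H_i(X_{is})-H_i(X_{is})=\tfrac{1}{n}\bigl(1-H_i(X_{is})\bigr)+\tfrac{1}{n}\sum_{k\neq s}\bigl(\1\{X_{ik}\le X_{is}\}-H_i(X_{is})\bigr),
\end{equation*}
and the summands with $k\neq s$ are, conditionally on $X_{is}$, i.i.d.\ centered Bernoulli variables with variance $H_i(X_{is})(1-H_i(X_{is}))\le 1/4$. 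Thus the conditional second moment is at most $2n^{-2}+(n-1)/(4n^2)=O(1/n)$, and the same argument applied to $\hat q_i(X_{is})-q_i(X_{is})$ yields $\E[(\hat q_i(X_{is})-q_i(X_{is}))^2]=O(1/n)$.

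Combining the two estimates and summing,
\begin{equation*}
\E\!\left[\tfrac{1}{p}\|\bfD-\tilde\bfD\|_F^2\right]\;\le\;\frac{C}{pn}\cdot pn\cdot \frac{1}{n}\;=\;O(1/n),
\end{equation*}
so Markov's inequality gives the desired convergence in probability as $\pto$.

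The only mildly delicate point is the treatment of the diagonal term $k=s$ in $\hat H_i(X_{is})$ and $\hat q_i(X_{is})$: naively applying a Glivenko--Cantelli-type bound one is fine, but the cleanest path, which also clarifies that no continuity assumption on $H_i$ is needed (ties are allowed), is the conditional variance computation sketched above. Everything else is a direct boundedness/Cauchy--Schwarz/Markov chain.
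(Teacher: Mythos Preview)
Your argument is correct. The route, however, differs from the paper's. The paper controls $|U_{is}-\hat U_{is}|$ via the Dvoretzky--Kiefer--Wolfowitz inequality, obtaining an exponential tail bound for $\sup_x|\hat H_i(x)-H_i(x)|$ and then a truncation argument at level $\varepsilon n^{-1/3}$ to bound $\max_i \E|U_{i1}-\hat U_{i1}|$; this yields $\tfrac{1}{p}\E\|\bfD-\tilde\bfD\|_F^2=O(n^{-1/3})$. You instead compute the conditional variance of $\hat H_i(X_{is})$ and $\hat q_i(X_{is})$ directly after isolating the self-term $k=s$, which is more elementary (no DKW needed) and yields the sharper rate $O(1/n)$. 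The paper's DKW bound has the advantage that it is reused verbatim in the proof of Lemma~\ref{lem_d_ii} to control $\max_{i,s}|U_{is}-\hat U_{is}|$ via a union bound, whereas your second-moment estimate would not suffice for that uniform-in-$s$ statement; but for the present lemma your approach is cleaner and entirely sufficient.
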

\begin{proof}[Proof of Lemma \ref{lem_dhat}]
  As a preparation, we get from the Dvoretzky–Kiefer–Wolfowitz inequality (see \cite[Corollary 1 and Comment 2 (iii)]{massart:1990}) that for every $1 \leq i \leq p$ and $\vep>0$
    \begin{align}
        \P \lb \left| U_{i1} - \hat U_{i1} \right| > \varepsilon  \rb 
        & \leq \P \lb \sup_{x\in \R} \left| 2 \hat H_i(x) - \hat q_i(x) - \lb 2 H_i(x) - q_i(x) \rb \right| > \varepsilon \rb \nonumber \\ 
        & \leq \P \lb \sup_{x\in \R} \left| 2 \hat H_i(x) - 2 H_i(x) \right| > \frac{ \varepsilon }{2} \rb
        +\P \lb \sup_{x\in \R} \left| \hat q_i(x) - q_i(x)  \right| > \frac{ \varepsilon }{2} \rb \nonumber \\ 
        & \leq \P \lb \sup_{x\in \R} \left|  \hat H_i(x) -  H_i(x) \right| > \frac{ \varepsilon }{4} \rb
        +2 \P \lb \sup_{x\in \R} \left| \hat H_i(x) - H_i(x)  \right| > \frac{ \varepsilon }{4} \rb \nonumber \\  
        & \leq 
         6 \exp \lb - \frac{2 n \varepsilon^2}{16 } \rb \,. \label{eq_dkw}
    \end{align}
  This implies, for some $c>0,$
    \begin{align*}
\E\left[\left(\hat U_{i1}-U_{i1}\right)^2\right]
&=
2\int_0^\infty
t \, \P\left(\left|\hat U_{i1}-U_{i1}\right|>t\right)\,dt \leq
12\int_0^\infty t \exp\left(-c n t^2\right)\,dt 
\\ & =  \frac{12}{2cn}\int_0^\infty e^{-u}\,du
\lesssim \frac{1}{n},
\end{align*} 
where we used the substitution $u=cnt^2$. Since $U_{is}$ and $\hat U_{is}$ are uniformly bounded, we also have
\begin{align} \label{q1}
\E \left(U_{is}^2-\hat U_{is}^2\right)^2
\lesssim
\E \left(U_{is}-\hat U_{is}\right)^2 \lesssim \frac{1}{n}.
\end{align}

Combining \eqref{q1} with Jensen's inequality gives
		\begin{align*}
       \E \left[ \left\| \bfD - \tilde\bfD \right\|_F^2 \right] 
      &  = \sum_{i=1}^p \E \left[ \lb \tilde D_{ii} - D_{ii} \rb^2  \right]
       = \sum_{i=1}^p \E \left[ 
      \left\{ \frac{1}{n} \sum_{s=1}^n \lb 3 U_{is}^2 - 3 \hat U_{is}^2 \rb 
      \right\} ^2 \right]  \\
		 &\leq \frac{9}{n} \sum_{i=1}^p \sum_{s=1}^n \E\lb U_{is}^2 - \hat U_{is}^2 \rb^2 \lesssim \frac{p}{n} .
  \end{align*} 
   Thus, we have 
    $$
    \frac{n}{p^2}  \E \left[ \left\| \bfD - \tilde\bfD \right\|_F^2 \right] \lesssim \frac{n}{p^2} \frac{p}{n} = \frac{1}{p} = o(1)\,, \qquad \pto\,,
    $$ 
	and the desired statement follows by Markov's inequality.
\end{proof}

\begin{lemma} \label{lem_d_ii}
   As $\pto$, it holds with probability converging to $1$ that
    \begin{align}
        \min_{1 \leq i \leq p} D_{ii} & > C, \quad  \label{d_ii} \\ 
        \min_{1 \leq i \leq p} \tilde D_{ii} & >  C, \label{tilde_d_ii}
    \end{align}
    for some constant $C>0.$
\end{lemma}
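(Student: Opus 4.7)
The plan is to first establish \eqref{tilde_d_ii} by uniformly lower-bounding $\E[U_{i1}^2]$ and then concentrating $\tilde D_{ii}$ around its mean via Hoeffding, and afterwards to deduce \eqref{d_ii} through a maximum-norm approximation of $\bfD$ by $\tilde\bfD$ based on the Dvoretzky--Kiefer--Wolfowitz (DKW) inequality. Note that Lemma~\ref{lem_dhat} only provides an average (Frobenius) bound, which is too weak for a minimum-over-$i$ statement, so the DKW route is essential for the first claim.

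For the lower bound, I will exploit Assumption~\ref{ass_asympt_non_degen} exactly as in Section~\ref{lem:difficult}: for all sufficiently large $p$ and every $i$, there exists $t_i\in\R$ (falling into case (i) or case (ii) of that proof) such that $\min(\alpha_i,1-\alpha_i)\ge\eta/2$, where $\alpha_i=\P(X_{i1}\le t_i)$ (or the strict-inequality variant). By \eqref{x1} we have $U_{i1}=\E[\sign(X_{i1}-X_{i1}')\mid X_{i1}]$ with $X_{i1}'$ an independent copy of $X_{i1}$, so the conditional Jensen inequality applied to $\sigma(\tilde X_i)\subseteq\sigma(X_{i1})$ with $\tilde X_i:=\1(X_{i1}>t_i)$ yields
\begin{equation*}
\E[U_{i1}^2]\ \ge\ \E\bigl[\bigl(\E[\sign(X_{i1}-X_{i1}')\mid \tilde X_i]\bigr)^2\bigr].
\end{equation*}
A direct symmetry computation (the same-side case vanishes by exchangeability, the cross-side case contributes $\pm 1$) shows that the right-hand side equals $\alpha_i(1-\alpha_i)\ge (\eta/2)(1-\eta/2)=:3c>0$, uniformly in $i$. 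Since $U_{i1},\dots,U_{in}$ are i.i.d.\ with values in $[-1,1]$, Hoeffding's inequality gives $\P(\tilde D_{ii}<2c)\le 2\exp(-c'n)$ for some $c'>0$; a union bound over $i=1,\dots,p$, permissible because $p=O(n)$, then proves \eqref{tilde_d_ii}.

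For the transfer to $D_{ii}$, boundedness of $\hat U_{is},U_{is}$ by $1$ gives
$|D_{ii}-\tilde D_{ii}|\le 6\max_s|\hat U_{is}-U_{is}|$, and the identity $q_i(x)=H_i(x)-H_i(x^-)$ (and analogously for $\hat q_i$) yields $|\hat U_{is}-U_{is}|\le 4\sup_x|\hat H_i(x)-H_i(x)|$. Combining Massart's form of the DKW inequality used in \eqref{eq_dkw} with a union bound over $i=1,\dots,p$ at level $\varepsilon_n=n^{-1/4}$ (so that $p\exp(-2n\varepsilon_n^2)=p\exp(-2\sqrt{n})\to 0$) gives $\max_i\sup_x|\hat H_i(x)-H_i(x)|=o_\P(1)$, hence $\max_i|D_{ii}-\tilde D_{ii}|=o_\P(1)$. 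Together with \eqref{tilde_d_ii} this yields $\min_i D_{ii}>c$ with probability tending to $1$, proving \eqref{d_ii} with $C=c$.

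The conceptually nontrivial step is the uniform lower bound $\E[U_{i1}^2]\ge 3c>0$ extracted purely from non-degeneracy; once it is in place the rest is standard concentration. The conditional-Jensen/Bernoulli-split device ensures that the constant depends only on the $\eta$ from Assumption~\ref{ass_asympt_non_degen}, and it is exactly this robustness that allows \eqref{tilde_d_ii} and \eqref{d_ii} to hold for arbitrary (possibly heavy-tailed, possibly tied) row distributions.
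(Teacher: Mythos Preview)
Your proof is correct, but it inverts the paper's order and uses a different device for the direct lower bound. The paper first establishes \eqref{d_ii} by reusing the rank-reduction machinery from Section~\ref{lem:difficult} (Lemma~\ref{lem_rank}, reducing to two values and bounding $T_i\ge \tfrac14 n\,m_{i1}m_{i2}$, then Hoeffding on the Bernoulli counts $m_{i1},m_{i2}$), and only afterwards transfers to \eqref{tilde_d_ii} via DKW and $\max_i|D_{ii}-\tilde D_{ii}|=o_\P(1)$. You instead attack $\tilde D_{ii}$ directly: the conditional-Jensen/Bernoulli-split identity $\E[U_{i1}^2]\ge \alpha_i(1-\alpha_i)$ is a clean replacement for the combinatorial Lemma~\ref{lem_rank} and yields the uniform population lower bound in one line, after which Hoeffding on the bounded i.i.d.\ summands $U_{is}^2$ and a union bound give \eqref{tilde_d_ii}; your DKW transfer to \eqref{d_ii} is then identical in spirit to the paper's transfer (just in the opposite direction). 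Your route is arguably more self-contained since it avoids the rank-tying argument, while the paper's route is economical because that argument was already needed for \eqref{4thmoment_z}. One cosmetic point: with your definition $3c:=(\eta/2)(1-\eta/2)$ you have $\E[\tilde D_{ii}]=3\E[U_{i1}^2]\ge 9c$, so the Hoeffding threshold ``$\tilde D_{ii}<2c$'' is generous; this does not affect correctness.
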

\begin{proof}[Proof of Lemma \ref{lem_d_ii}]
First, note that \eqref{d_ii} follows along the lines of the proof of  \eqref{4thmoment_z} in Section~\ref{lem:difficult}. The constant $C$ will depend on $\eta$ in \eqref{eq_asympt_non_degen11} whose existence is guaranteed by Assumption~\ref{ass_asympt_non_degen}. For brevity, we omit details.

Next, we turn to the proof of statement \eqref{tilde_d_ii} concerning $\tilde D_{ii}$. Using the union bound and \eqref{eq_dkw}, it follows for $\varepsilon>0$ that 
\begin{align*}
     \P \lb \max_{\substack{1 \leq i \leq p, \\ 1 \leq s \leq n}} \left| U_{is} - \hat U_{is} \right| > \varepsilon  \rb 
     \leq n \sum_{i=1}^p  \P \lb \left| U_{i1} - \hat U_{i1} \right| > \varepsilon  \rb  = o(1).
\end{align*}
This implies
\begin{align*}
    \max_{1\leq i \leq p} | D_{ii} - \tilde D_{ii} | 
    \lesssim \max_{\substack{1 \leq i \leq p, \\ 1 \leq s \leq n}} \left| U_{is} - \hat U_{is} \right| = o_{\P}(1). 
\end{align*}
Combining this result with \eqref{d_ii}, the proof of \eqref{tilde_d_ii} concludes. 
\end{proof}

With these preparations in mind, we continue with the proof of claim $(i).$ That is, we have to show that the LSDs of $\sqrt{n/p}~\,\tilde \bfD^{-1/2} \bfM  \tilde \bfD^{-1/2}$ and $\sqrt{n/p}~\bfT$ coincide. 
Note that, by \cite[Corollary A.41]{bai:silverstein:2010}, the \Levy distance between the corresponding ESDs converges to zero (in probability) if $1/p$ times the squared Frobenius norm $\| \cdot \|_F$ of the difference between these to matrices tends to zero (in probability). For this purpose, we write $\bfM = \bfM^{(1,1)} + \tilde\bfM^{(2)} $ using \eqref{decomp_Ttilde} and \eqref{decomp_M1}, where 
\begin{align*}
    \tilde\bfM^{(2)} & = \bfM^{(2)}+(\bfM^{(2)})'+\bfM^{(3)} + \bfM^{(4)}.
\end{align*}
These matrices satisfy 
\begin{align} \label{f1}
    \| \bfM^{(1,1)} \| & \lesssim 1 \textnormal{ with probability converging to } 1,  
    \quad 
   \frac{n}{p^2} \| \tilde\bfM^{(2)} \|_F^2  = o_{\P}(1),
\end{align}
where the first statement follows from an application of \cite[Theorem 5.9]{bai:silverstein:2010} (after normalization of $\bfu_s$), and the second assertion can be shown similarly to \eqref{neg_terms} below.
Using Lemma~\ref{lem_dhat}, Lemma~\ref{lem_d_ii} and \eqref{f1}, we get
\begin{align*}
   &  \frac{n}{p^2} \left\| \tilde\bfD^{-1/2} \bfM \lb \bfD^{-1}\!\!  - \tilde\bfD^{-1} \rb  \right\|_F^2  
    \lesssim \| \bfM^{(1,1)} \|^2 \frac{n}{p^2}\| \bfD^{-1/2} \!\! - \tilde\bfD^{-1/2} \|_F^2 
    + \| \bfD^{-1/2} \!\! - \tilde\bfD^{-1/2}  \|^2 \frac{n}{p^2}\| \tilde\bfM^{(2)}  \|_F^2 
     = o_{\P}(1). 
\end{align*}
Similarly, one can show that 
\begin{align*}
    \frac{n}{p^2} \left\| \lb \bfD^{-1/2} - \tilde\bfD^{-1/2} \rb \bfM \bfD^{-1/2}  \right\|_F^2 
    = o_{\P}(1). 
\end{align*}
Combining these bounds with \eqref{rep_t_m}, we get 
\begin{align}
   &  \frac{n}{p^2} \left\| \bfT  -   \tilde\bfD^{-1/2} \bfM \tilde\bfD^{-1/2}  
    \right\|_F^2  
     = 
     \frac{n}{p^2} \left\| \bfD^{-1/2} \bfM \bfD^{-1/2} - \tilde\bfD^{-1/2} \bfM \tilde\bfD^{-1/2} \right\|_F^2  \nonumber \\
     & \lesssim \frac{n}{p^2} \left\| \lb \bfD^{-1/2} - \tilde\bfD^{-1/2} \rb \bfM \bfD^{-1/2}  \right\|_F^2 
     + \frac{n}{p^2} \left\|    \tilde\bfD^{-1/2}  \bfM \lb \bfD^{-1/2} - \tilde\bfD^{-1/2} \rb \right\|_F^2 
     \nonumber \\ 
    &  = o_{\P}(1). \label{z1}
\end{align}
Thus, LSDs of $\sqrt{n/p}~\,\tilde \bfD^{-1/2} \bfM  \tilde \bfD^{-1/2}$ and $\sqrt{n/p}~\bfT$ coincide, and the proof of claim $(i)$ concludes. 

\subsection{Proof of claim $(ii)$} \label{sec_proof_kendall_claim2}
By \cite[Corollary A.41]{bai:silverstein:2010}, the \Levy distance between the ESDs of $\sqrt{n/p}\,\tilde \bfD^{-1/2} \bfM  \tilde \bfD^{-1/2}$ and $\sqrt{n/p}\,\tilde \bfD^{-1/2} \bfM^{(1,1)}  \tilde \bfD^{-1/2}$ converges to zero (in probability) if $1/p$ times the squared Frobenius norm $\| \cdot \|_F$ of the difference between these two matrices tends to zero (in probability), that is,
\begin{equation}\label{eq:dfgsded}
\frac{1}{p} \frac{n}{p} \Big\|\tilde \bfD^{-1/2} \big(\bfM-\bfM^{(1,1)} \big)  \tilde \bfD^{-1/2}\Big\|_F^2\cip 0\,, \qquad \pto\,.
\end{equation}
By \eqref{tilde_d_ii}, we may assume that $\min_{1\le i\le p} \tilde D_{ii}>C$ for the remainder of this proof.
In view of \eqref{decomp_Ttilde} and the triangle inequality for the Frobenius norm, equation \eqref{eq:dfgsded} holds if we can show that the terms
\begin{align}
\frac{n}{p^2} \left\| \tilde\bfD^{-1/2} \bfM^{(a)} \tilde\bfD^{-1/2} \right\|_F^2, \qquad  a\in \{2,3,4\}
    \label{neg_terms}
\end{align}
 converge to $0$ (in probability). 
For $a=4$, the assertion \eqref{neg_terms} follows from 
\begin{equation*}
\begin{split}
&\frac{n}{p^2}  \E \Big[ \Big\| \frac{2}{n(n-1)} \sum_{1\le s\neq t\le n} \tilde \bfD^{-1/2} \offdiag( \bfu_s \bfu_t') \tilde \bfD^{-1/2} \Big\|_F^2 \Big]
 \lesssim \frac{1}{n^{3} p^2} \sum_{1 \leq i \neq j \leq p} \E \Bigg[\Bigg( \sum_{1\le s\neq t\le n} \frac{ U_{is} U_{jt} }{ \sqrt{\tilde D_{ii} \tilde D_{jj} } } \Bigg)^2\Bigg]\\
&\lesssim \frac{1}{n^{3} p^2} \sum_{1 \leq i \neq j \leq p} \E \Bigg[\Bigg( \sum_{1\le s\neq t\le n}  U_{is} U_{jt}  \Bigg)^2\Bigg]
\lesssim \frac{1}{n^{3} p^2} \sum_{1 \leq i \neq j \leq p} \sum_{1\le s\neq t\le n} \E \Big[   U_{is}^2 U_{jt}^2  \Big]\lesssim \frac{1}{n}\,,
\end{split}
\end{equation*}
since $\E[U_{is}]=0$. For $a=3$, we get similarly
\begin{align*}
&\frac{n}{p^2} \left\| \tilde\bfD^{-1/2} \bfM^{(3)} \tilde\bfD^{-1/2} \right\|_F^2 \lesssim \frac{1}{n^{3} p^2} \sum_{1 \leq i \neq j \leq p} \E \Bigg[\Bigg( \sum_{1\le s\neq t\le n} \frac{ Z_{i,s,t} Z_{j,s,t} }{ \sqrt{\tilde D_{ii} \tilde D_{jj} } } \Bigg)^2\Bigg]\\
&\lesssim \frac{1}{n^{3} p^2} \sum_{1 \leq i \neq j \leq p} \E \Bigg[\Bigg( \sum_{1\le s\neq t\le n}  Z_{i,s,t} Z_{j,s,t}  \Bigg)^2\Bigg]\lesssim \frac{1}{n}\,,
\end{align*}
where $Z_{i,s,t}:= \sign (X_{is}-X_{it})-U_{is}+U_{it}$. For the last step we used that $\E[Z_{i,s,t}Z_{i,s',t'}]=0$ if $\{s,t\}\neq \{s',t'\}$, which follows from the tower property of conditional expectation. 
The argument for $a=2$ is completely analogous, establishing \eqref{neg_terms} and completing the proof of claim $(ii)$.

\subsection{Proof of claim $(iii)$} \label{sec_proof_kendall_claim3}
Assume $\gamma>0$ and note that
\begin{align*}
\tilde \bfD^{-1/2} \bfM^{(1,1)}  \tilde \bfD^{-1/2} &= 
 \frac{2}{n } \sum\limits_{s=1}^n  \tilde \bfD^{-1/2} \bfu_s \bfu_s' \tilde \bfD^{-1/2}
     - \frac{2}{3}  \bfI\\
		&= \frac{2}{3} \Y\Y'
     - \frac{2}{3} \bfI\,,
\end{align*}
where $\Y:=\sqrt{3/n}\, \tilde\bfD^{-1/2} (\bfu_1, \ldots, \bfu_{n})$. The matrix $\Y$ has independent rows on the Euclidean unit sphere and one can check that it 
satisfies the conditions of Theorem~\ref{thm_unit_sphere}.
Therefore, by Theorem~\ref{thm_unit_sphere} part (2), the ESD of $\frac{2}{3} \Y\Y'$ converges a.s.\ to the distribution of $\tfrac{2}{3} \eta$, which establishes the first part of claim $(iii)$.

In the case $\gamma=0$, an application of Theorem~\ref{thm_unit_sphere} part (1)  yields  that the ESD of 
\begin{align*} 
   \sqrt{\frac{n}{p}}  \tilde\bfD^{-1/2} \bfM^{(1,1)} \tilde\bfD^{-1/2}  &= \frac{2}{3} \sqrt{\frac{n}{p}} \big(\Y\Y' - \bfI\big)
\end{align*}
converges a.s.\ to the distribution of $\frac{2}{3}\zeta$, completing the proof of claim $(iii)$.

\section*{Funding}
The work of N.\ Dörnemann  was supported by the  
 DFG Research unit 5381 {\it Mathematical Statistics in the Information Age}, project number 460867398, and the Aarhus University Research Foundation (AUFF), project numbers 47221 and 47388.
 J.\ Heiny’s research was supported by the Swedish Research Council grant VR-2023-03577 ``High-dimensional extremes and random matrix structures'' and by the Verg-Foundation.

 \section*{Acknowledgments}

 We are grateful to the referees, the Associate Editor and Editor-in-Chief for their helpful comments and suggestions on an earlier version of this manuscript.

\bibliographystyle{plainnat}
  \bibliography{libraryAug2022.bib}

@article{bose2009limiting,
  title={Limiting spectral distribution of circulant type matrices with dependent inputs},
  author={Bose, Arup and Hazra, Rajat and Saha, Koushik},
  year={2009},
  journal={Electronic Journal of Probability},
  volume={14},
  pages={2463–2491},
  number={86}
}

@article{bao2019tracy_sperman,
  title={Tracy--{W}idom limit for {S}pearman’s rho},
  author={Bao, Zhigang},
  journal={Preprint},
  year={2019}
}

@article{bao2019tracy,
  title={Tracy--{W}idom limit for {K}endall’s tau},
  author={Bao, Zhigang},
  journal={The Annals of Statistics},
  volume={47},
  number={6},
  pages={3504--3532},
  year={2019},
  publisher={Institute of Mathematical Statistics}
}

@article {bryc2006spectral,
    AUTHOR = {Bryc, W{\l}odzimierz and Dembo, Amir and Jiang, Tiefeng},
     TITLE = {Spectral measure of large random {H}ankel, {M}arkov and
              {T}oeplitz matrices},
   JOURNAL = {Ann. Probab.},
  FJOURNAL = {The Annals of Probability},
    VOLUME = {34},
      YEAR = {2006},
    NUMBER = {1},
     PAGES = {1--38},
      ISSN = {0091-1798,2168-894X},
   MRCLASS = {60F99 (15A52 60F10 62H10)},
  MRNUMBER = {2206341},
MRREVIEWER = {Martin\ V.\ Hildebrand},
       DOI = {10.1214/009117905000000495},
       URL = {https://doi.org/10.1214/009117905000000495},
}

@article{silverstein1995analysis,
  title={Analysis of the limiting spectral distribution of large dimensional random matrices},
  author={Silverstein, Jack W and Choi, Sang-Il},
  journal={Journal of Multivariate Analysis},
  volume={54},
  number={2},
  pages={295--309},
  year={1995},
  publisher={Elsevier}
}

@article{johnstone2006,
  author = {Johnstone, I. M.},
  title = {High dimensional statistical inference and random matrices},
  journal = {Proceedings of the International Congress of Mathematicians, Madrid},
  year = {2006}
}

@article{Fan2006,
 author = {Fan, Jianqing and Li, Runze},
title = {Statistical Challenges with High Dimensionality: Feature Selection in Knowledge Discovery},
journal = {Proceedings of the International Congress of Mathematicians, Madrid},
year = {2006},
 volume = {3}
}

@article {wang2021eigenvalues,
    AUTHOR = {Li, Zeng and Wang, Cheng and Wang, Qinwen},
     TITLE = {On eigenvalues of a high-dimensional {K}endall's rank
              correlation matrix with dependence},
   JOURNAL = {Sci. China Math.},
  FJOURNAL = {Science China. Mathematics},
    VOLUME = {66},
      YEAR = {2023},
    NUMBER = {11},
     PAGES = {2615--2640},
      ISSN = {1674-7283,1869-1862},
   MRCLASS = {60B20 (60F05 62H20)},
  MRNUMBER = {4658666},
       DOI = {10.1007/s11425-022-2031-2},
       URL = {https://doi.org/10.1007/s11425-022-2031-2},
}

@article {wu2022limiting,
    AUTHOR = {Wu, Zeyu and Wang, Cheng},
     TITLE = {Limiting spectral distribution of large dimensional
              {S}pearman's rank correlation matrices},
   JOURNAL = {J. Multivariate Anal.},
  FJOURNAL = {Journal of Multivariate Analysis},
    VOLUME = {191},
      YEAR = {2022},
     PAGES = {Paper No. 105011, 14},
      ISSN = {0047-259X,1095-7243},
   MRCLASS = {62G30 (62H20)},
  MRNUMBER = {4432311},
MRREVIEWER = {Sergey\ V.\ Malov},
       DOI = {10.1016/j.jmva.2022.105011},
       URL = {https://doi.org/10.1016/j.jmva.2022.105011},
}

@article{spearman1987proof,
  title={The proof and measurement of association between two things},
  author={Spearman, Charles},
  journal={The American journal of psychology},
  volume={100},
  number={3/4},
  pages={441--471},
  year={1987},
  publisher={JSTOR}
}

@book {bai:silverstein:2010,
    AUTHOR = {Bai, Zhidong and Silverstein, Jack W.},
     TITLE = {Spectral Analysis of Large Dimensional Random Matrices},
    SERIES = {Springer Series in Statistics},
   EDITION = {Second},
 PUBLISHER = {Springer, New York},
      YEAR = {2010},
     PAGES = {xvi+551},
      ISBN = {978-1-4419-0660-1},
   MRCLASS = {60B20 (15B52 62H99 91G70 94A05)},
  MRNUMBER = {2567175 (2011d:60014)},
MRREVIEWER = {Wenbo V. Li},
       DOI = {10.1007/978-1-4419-0661-8},
       URL = {http://dx.doi.org/10.1007/978-1-4419-0661-8},
}

@article {doernemann:heiny:2025,
    AUTHOR = {D\"ornemann, Nina and Heiny, Johannes},
     TITLE = {Limiting spectral distribution for large sample correlation
              matrices},
   JOURNAL = {Ann. Appl. Probab.},
  FJOURNAL = {The Annals of Applied Probability},
    VOLUME = {35},
      YEAR = {2025},
    NUMBER = {4},
     PAGES = {2570--2603},
      ISSN = {1050-5164,2168-8737},
   MRCLASS = {99-06},
  MRNUMBER = {4945086},
       DOI = {10.1214/25-AAP2181},
       URL = {https://doi.org/10.1214/25-AAP2181},
}

@article {elkaroui:2003,
    AUTHOR = {El Karoui, Noureddine},
     TITLE = {On the largest eigenvalue of {W}ishart matrices with identity covariance when $n,p$ and $p/n$  tend to infinity},
   JOURNAL = {Available at {\tt http://arxiv.org/abs/math/0309355}},
  FJOURNAL = {},
    VOLUME = {},
      YEAR = {2003},
}

@article{heiny:mikosch:2017:corr,
  title={Almost sure convergence of the largest and smallest eigenvalues of high-dimensional sample correlation matrices},
  author={Heiny, Johannes and Mikosch, Thomas},
  journal={Stochastic Processes and their Applications},
  volume={128},
  number={8},
  pages={2779--2815},
  year={2018},
  publisher={Elsevier}
}

@article {bai:zhou:2008,
    AUTHOR = {Bai, Zhidong and Zhou, Wang},
     TITLE = {Large sample covariance matrices without independence
              structures in columns},
   JOURNAL = {Statist. Sinica},
  FJOURNAL = {Statistica Sinica},
    VOLUME = {18},
      YEAR = {2008},
    NUMBER = {2},
     PAGES = {425--442},
      ISSN = {1017-0405},
   MRCLASS = {60F15 (62J10 62M15)},
  MRNUMBER = {2411613},
MRREVIEWER = {Kent E. Morrison},
}

@article {gine:goetze:mason:1997,
    AUTHOR = {Gin{\'e}, Evarist and G{\"o}tze, Friedrich and Mason, David
              M.},
     TITLE = {When is the {S}tudent {$t$}-statistic asymptotically standard
              normal?},
   JOURNAL = {Ann. Probab.},
  FJOURNAL = {The Annals of Probability},
    VOLUME = {25},
      YEAR = {1997},
    NUMBER = {3},
     PAGES = {1514--1531},
      ISSN = {0091-1798},
     CODEN = {APBYAE},
   MRCLASS = {60F05 (62E20)},
  MRNUMBER = {1457629},
MRREVIEWER = {Erich Haeusler},
       DOI = {10.1214/aop/1024404523},
       URL = {http://dx.doi.org/10.1214/aop/1024404523},
}

@article {elkaroui:2009,
    AUTHOR = {El Karoui, Noureddine},
     TITLE = {Concentration of measure and spectra of random matrices:
              applications to correlation matrices, elliptical distributions
              and beyond},
   JOURNAL = {Ann. Appl. Probab.},
  FJOURNAL = {The Annals of Applied Probability},
    VOLUME = {19},
      YEAR = {2009},
    NUMBER = {6},
     PAGES = {2362--2405},
      ISSN = {1050-5164},
   MRCLASS = {62H10 (60F10)},
  MRNUMBER = {2588248},
MRREVIEWER = {Peter Eichelsbacher},
       DOI = {10.1214/08-AAP548},
       URL = {http://dx.doi.org/10.1214/08-AAP548},
}

@book {bai:fang:liang:2014,
    AUTHOR = {Bai, Zhidong and Fang, Zhaoben and Liang, Ying-Chang},
     TITLE = {Spectral theory of large dimensional random matrices and its
              applications to wireless communications and finance
              statistics},
      NOTE = {Random matrix theory and its applications},
 PUBLISHER = {World Scientific Publishing Co. Pte. Ltd., Hackensack, NJ;
              University of Science and Technology of China Press, Hefei},
      YEAR = {2014},
     PAGES = {xii+220},
      ISBN = {978-981-4579-05-6},
   MRCLASS = {15-01 (15B52 91G70 94A05)},
  MRNUMBER = {3184001},
MRREVIEWER = {Walid Hachem},
       DOI = {10.1142/9049},
       URL = {https://doi.org/10.1142/9049},
}

@article {marchenko:pastur:1967,
    AUTHOR = {Mar{\v{c}}enko, V. A. and Pastur, L. A.},
     TITLE = {Distribution of eigenvalues in certain sets of random
              matrices},
   JOURNAL = {Mat. Sb. (N.S.)},
    VOLUME = {72 (114)},
      YEAR = {1967},
     PAGES = {507--536},
   MRCLASS = {60.30 (60.90)},
  MRNUMBER = {0208649},
MRREVIEWER = {J. G. Wendel},
}

@book {yao:zheng:bai:2015,
    AUTHOR = {Yao, Jianfeng and Zheng, Shurong and Bai, Zhidong},
     TITLE = {Large sample covariance matrices and high-dimensional data
              analysis},
    SERIES = {Cambridge Series in Statistical and Probabilistic Mathematics},
 PUBLISHER = {Cambridge University Press, New York},
      YEAR = {2015},
     PAGES = {xiv+308},
      ISBN = {978-1-107-06517-8},
   MRCLASS = {62-02 (60B20 62H12 62H15 62H30 62P05 94A08 94A17)},
  MRNUMBER = {3468554},
       DOI = {10.1017/CBO9781107588080},
       URL = {http://dx.doi.org/10.1017/CBO9781107588080},
}

@article {wigner:1955,
    AUTHOR = {Wigner, Eugene P.},
     TITLE = {Characteristic vectors of bordered matrices with infinite
              dimensions},
   JOURNAL = {Ann. of Math. (2)},
  FJOURNAL = {Annals of Mathematics. Second Series},
    VOLUME = {62},
      YEAR = {1955},
     PAGES = {548--564},
      ISSN = {0003-486X},
   MRCLASS = {60.0X},
  MRNUMBER = {0077805},
MRREVIEWER = {J. G. Wendel},
       DOI = {10.2307/1970079},
       URL = {http://dx.doi.org/10.2307/1970079},
}

@article {wigner:1957,
    AUTHOR = {Wigner, Eugene P.},
     TITLE = {Characteristic vectors of bordered matrices with infinite
              dimensions. {II}},
   JOURNAL = {Ann. of Math. (2)},
  FJOURNAL = {Annals of Mathematics. Second Series},
    VOLUME = {65},
      YEAR = {1957},
     PAGES = {203--207},
      ISSN = {0003-486X},
   MRCLASS = {60.0X},
  MRNUMBER = {0083848},
MRREVIEWER = {J. G. Wendel},
       DOI = {10.2307/1969956},
       URL = {http://dx.doi.org/10.2307/1969956},
}

@article {wang:yao:2016,
    AUTHOR = {Wang, Qinwen and Yao, Jianfeng},
     TITLE = {Moment approach for singular values distribution of a large
              auto-covariance matrix},
   JOURNAL = {Ann. Inst. Henri Poincar\'e Probab. Stat.},
  FJOURNAL = {Annales de l'Institut Henri Poincar\'e Probabilit\'es et
              Statistiques},
    VOLUME = {52},
      YEAR = {2016},
    NUMBER = {4},
     PAGES = {1641--1666},
      ISSN = {0246-0203},
   MRCLASS = {60B20 (15A18 15B52 60F15)},
  MRNUMBER = {3573290},
       DOI = {10.1214/15-AIHP693},
       URL = {http://dx.doi.org/10.1214/15-AIHP693},
}

@article {bao:lin:pan:zhou:2015,
    AUTHOR = {Bao, Zhigang and Lin, Liang-Ching and Pan, Guangming and Zhou,
              Wang},
     TITLE = {Spectral statistics of large dimensional {S}pearman's rank
              correlation matrix and its application},
   JOURNAL = {Ann. Statist.},
  FJOURNAL = {The Annals of Statistics},
    VOLUME = {43},
      YEAR = {2015},
    NUMBER = {6},
     PAGES = {2588--2623},
      ISSN = {0090-5364},
   MRCLASS = {60B20 (15B52 60F05 62G10 62H10)},
  MRNUMBER = {3405605},
MRREVIEWER = {Teng Zhang},
       DOI = {10.1214/15-AOS1353},
       URL = {http://dx.doi.org/10.1214/15-AOS1353},
}

@article{bandeira:lodhia:rigollet:2017,
  title={Mar{\v{c}}enko-Pastur law for {K}endall’s tau},
  author={Bandeira, Afonso S and Lodhia, Asad and Rigollet, Philippe and others},
  journal={Electronic Communications in Probability},
  volume={22},
  year={2017},
  publisher={The Institute of Mathematical Statistics and the Bernoulli Society}
}

@article {fuchs:joffe:teugels:2001,
    AUTHOR = {Fuchs, A. and Joffe, A. and Teugels, J.},
     TITLE = {Expectation of the ratio of the sum of squares to the square
              of the sum: exact and asymptotic results},
   JOURNAL = {Teor. Veroyatnost. i Primenen.},
  FJOURNAL = {Rossi\u\i skaya Akademiya Nauk. Teoriya Veroyatnoste\u\i \ i ee
              Primeneniya},
    VOLUME = {46},
      YEAR = {2001},
    NUMBER = {2},
     PAGES = {297--310},
      ISSN = {0040-361X},
   MRCLASS = {62E20 (62E10)},
  MRNUMBER = {1968687},
MRREVIEWER = {I. N. Volodin},
       DOI = {10.1137/S0040585X97978919},
       URL = {http://dx.doi.org/10.1137/S0040585X97978919},
}

@article {silverstein:bai:1995,
    AUTHOR = {Silverstein, Jack W. and Bai, Z. D.},
     TITLE = {On the empirical distribution of eigenvalues of a class of
              large-dimensional random matrices},
   JOURNAL = {J. Multivariate Anal.},
  FJOURNAL = {Journal of Multivariate Analysis},
    VOLUME = {54},
      YEAR = {1995},
    NUMBER = {2},
     PAGES = {175--192},
      ISSN = {0047-259X},
   MRCLASS = {60F15 (62H99)},
  MRNUMBER = {1345534},
MRREVIEWER = {A. K. Gupta},
       DOI = {10.1006/jmva.1995.1051},
       URL = {http://dx.doi.org/10.1006/jmva.1995.1051},
}

@book {ledoux:2001,
    AUTHOR = {Ledoux, Michel},
     TITLE = {The concentration of measure phenomenon},
    SERIES = {Mathematical Surveys and Monographs},
    VOLUME = {89},
 PUBLISHER = {American Mathematical Society, Providence, RI},
      YEAR = {2001},
     PAGES = {x+181},
      ISBN = {0-8218-2864-9},
   MRCLASS = {28C15 (28A35 46B09 60E15 82B44)},
  MRNUMBER = {1849347},
MRREVIEWER = {Werner Linde},
}

@article {wang:paul:2014,
    AUTHOR = {Wang, Lili and Paul, Debashis},
     TITLE = {Limiting spectral distribution of renormalized separable
              sample covariance matrices when {$p/n\to 0$}},
   JOURNAL = {J. Multivariate Anal.},
  FJOURNAL = {Journal of Multivariate Analysis},
    VOLUME = {126},
      YEAR = {2014},
     PAGES = {25--52},
      ISSN = {0047-259X},
   MRCLASS = {62E20 (60B20 60F05 60F15 62H99)},
  MRNUMBER = {3173080},
       URL = {https://doi.org/10.1016/j.jmva.2013.12.015},
}

@article{han2017distribution,
  title={Distribution-free tests of independence in high dimensions},
  author={Han, Fang and Chen, Shizhe and Liu, Han},
  journal={Biometrika},
  volume={104},
  number={4},
  pages={813--828},
  year={2017},
  publisher={Oxford University Press}
}

@article{shi2022universally,
  title={On universally consistent and fully distribution-free rank tests of vector independence},
  author={Shi, Hongjian and Hallin, Marc and Drton, Mathias and Han, Fang},
  journal={The Annals of Statistics},
  volume={50},
  number={4},
  pages={1933--1959},
  year={2022},
  publisher={Institute of Mathematical Statistics}
}

@article {bastian2022independence,
    AUTHOR = {Bastian, Patrick and Dette, Holger and Heiny, Johannes},
     TITLE = {Testing for practically significant dependencies in high
              dimensions via bootstrapping maxima of {$U$}-statistics},
   JOURNAL = {Ann. Statist.},
  FJOURNAL = {The Annals of Statistics},
    VOLUME = {52},
      YEAR = {2024},
    NUMBER = {2},
     PAGES = {628--653},
      ISSN = {0090-5364,2168-8966},
   MRCLASS = {62G10 (62C20 62G09)},
  MRNUMBER = {4744190},
MRREVIEWER = {Endre\ Cs\'aki},
       DOI = {10.1214/24-aos2361},
       URL = {https://doi.org/10.1214/24-aos2361},
}

@article {li2021central,
    AUTHOR = {Li, Zeng and Wang, Qinwen and Li, Runze},
     TITLE = {Central limit theorem for linear spectral statistics of large
              dimensional {K}endall's rank correlation matrices and its
              applications},
   JOURNAL = {Ann. Statist.},
  FJOURNAL = {The Annals of Statistics},
    VOLUME = {49},
      YEAR = {2021},
    NUMBER = {3},
     PAGES = {1569--1593},
      ISSN = {0090-5364,2168-8966},
   MRCLASS = {62H10 (60B20 60F05 62H15)},
  MRNUMBER = {4298873},
       DOI = {10.1214/20-aos2013},
       URL = {https://doi.org/10.1214/20-aos2013},
}

@article{heiny:yao:2020,
  title={Limiting distributions for eigenvalues of sample correlation matrices from heavy-tailed populations},
  author={Heiny, Johannes and Yao, Jianfeng},
  journal={The Annals of Statistics},
  volume={50},
  number={6},
  pages={3249--3280},
  year={2022},
  publisher={Institute of Mathematical Statistics}
}

@article{massart:1990,
author = {P. Massart},
title = {The Tight Constant in the {D}voretzky-{K}iefer-{W}olfowitz Inequality},
volume = {18},
journal = {The Annals of Probability},
number = {3},
publisher = {Institute of Mathematical Statistics},
pages = {1269 -- 1283},
keywords = {Brownian bridge, empirical process, Kolmogorov-Smirnov statistics},
year = {1990},
doi = {10.1214/aop/1176990746},
URL = {https://doi.org/10.1214/aop/1176990746}
}

@book {agresti2010analysis,
    AUTHOR = {Agresti, Alan},
     TITLE = {Analysis of ordinal categorical data},
    SERIES = {Wiley Series in Probability and Statistics},
   EDITION = {Second},
 PUBLISHER = {John Wiley \& Sons, Inc., Hoboken, NJ},
      YEAR = {2010},
     PAGES = {xii+396},
      ISBN = {978-0-470-08289-8},
   MRCLASS = {62H17 (62-03 62H20)},
  MRNUMBER = {2742515},
MRREVIEWER = {Jon\ Stene},
       DOI = {10.1002/9780470594001},
       URL = {https://doi.org/10.1002/9780470594001},
}

@ARTICLE{heiny:2022,
    AUTHOR = {Johannes Heiny},
     TITLE = {Large sample correlation matrices: a comparison theorem and its applications},
   JOURNAL = {Electron. J. Probab.},
  FJOURNAL = {Electronic Journal of Probability},
      YEAR = {2022},
    VOLUME = {27},
       PNO = {94},
     PAGES = {1-20},
      ISSN = {1083-6489},
       DOI = {10.1214/22-EJP817},
      SICI = {1083-6489(2022)27:94<1:LSCMAC>2.0.CO;2-I},
}

@article {li:wang:yao:2022,
    AUTHOR = {Li, Weiming and Wang, Qinwen and Yao, Jianfeng and Zhou, Wang},
     TITLE = {On eigenvalues of a high-dimensional spatial-sign covariance
              matrix},
   JOURNAL = {Bernoulli},
  FJOURNAL = {Bernoulli. Official Journal of the Bernoulli Society for
              Mathematical Statistics and Probability},
    VOLUME = {28},
      YEAR = {2022},
    NUMBER = {1},
     PAGES = {606--637},
      ISSN = {1350-7265},
   MRCLASS = {60F05},
  MRNUMBER = {4337718},
       DOI = {10.3150/21-bej1360},
       URL = {https://doi.org/10.3150/21-bej1360},
}

@article{Kendall1938,
 ISSN = {00063444},
 URL = {http://www.jstor.org/stable/2332226},
 author = {M. G. Kendall},
 journal = {Biometrika},
 number = {1/2},
 pages = {81--93},
 publisher = {[Oxford University Press, Biometrika Trust]},
 title = {A New Measure of Rank Correlation},
 volume = {30},
 year = {1938}
}

@article{li_et_al_2023,
author = {Li, Weiming and Wang, Qinwen and Yao, Jianfeng},
year = {2023},
month = {01},
pages = {149-168},
title = {Eigenvalue distribution of a high-dimensional distance covariance matrix with application},
volume = {33},
journal = {Statistica Sinica}
}

@article {heiny:parolya:2024,
    AUTHOR = {Heiny, Johannes and Parolya, Nestor},
     TITLE = {Log determinant of large correlation matrices under infinite
              fourth moment},
   JOURNAL = {Ann. Inst. Henri Poincar\'e{} Probab. Stat.},
  FJOURNAL = {Annales de l'Institut Henri Poincar\'e{} Probabilit\'es et
              Statistiques},
    VOLUME = {60},
      YEAR = {2024},
    NUMBER = {2},
     PAGES = {1048--1076},
      ISSN = {0246-0203,1778-7017},
   MRCLASS = {60B20 (60F05 60G10 60G57 60G70)},
  MRNUMBER = {4757517},
MRREVIEWER = {Pragya\ Shukla},
       DOI = {10.1214/23-aihp1368},
       URL = {https://doi.org/10.1214/23-aihp1368},
}

@article {fleermann:heiny:2020,
    AUTHOR = {Fleermann, Michael and Heiny, Johannes},
     TITLE = {High-dimensional sample covariance matrices with
              {C}urie-{W}eiss entries},
   JOURNAL = {ALEA Lat. Am. J. Probab. Math. Stat.},
  FJOURNAL = {ALEA. Latin American Journal of Probability and Mathematical
              Statistics},
    VOLUME = {17},
      YEAR = {2020},
    NUMBER = {2},
     PAGES = {857--876},
   MRCLASS = {60B20 (60F05 60K35)},
  MRNUMBER = {4169603},
       DOI = {10.30757/alea.v17-33},
       URL = {https://doi.org/10.30757/alea.v17-33},
}

@book {vandervaart:1998,
    AUTHOR = {van der Vaart, A. W.},
     TITLE = {Asymptotic statistics},
    SERIES = {Cambridge Series in Statistical and Probabilistic Mathematics},
    VOLUME = {3},
 PUBLISHER = {Cambridge University Press, Cambridge},
      YEAR = {1998},
     PAGES = {xvi+443},
      ISBN = {0-521-49603-9; 0-521-78450-6},
   MRCLASS = {62-02 (62E20 62F05 62F12 62G07 62G09 62G20)},
  MRNUMBER = {1652247},
MRREVIEWER = {Nancy\ Reid},
       DOI = {10.1017/CBO9780511802256},
       URL = {https://doi.org/10.1017/CBO9780511802256},
}

@article {cohen:davis:samorodnitsky:2020,
    AUTHOR = {Cohen, Joel E. and Davis, Richard A. and Samorodnitsky,
              Gennady},
     TITLE = {Heavy-tailed distributions, correlations, kurtosis and
              {T}aylor's {L}aw of fluctuation scaling},
   JOURNAL = {Proc. A.},
  FJOURNAL = {Proceedings A},
    VOLUME = {476},
      YEAR = {2020},
    NUMBER = {2244},
     PAGES = {20200610, 27},
      ISSN = {1364-5021,1471-2946},
   MRCLASS = {60E07 (60F05 60G52 60G70 62F12)},
  MRNUMBER = {4203082},
MRREVIEWER = {Peter\ Kern},
       DOI = {10.1098/rspa.2020.0610},
       URL = {https://doi.org/10.1098/rspa.2020.0610},
}

@article {fleermann:heiny:2023,
    AUTHOR = {Fleermann, Michael and Heiny, Johannes},
     TITLE = {Large sample covariance matrices of {G}aussian observations
              with uniform correlation decay},
   JOURNAL = {Stochastic Process. Appl.},
  FJOURNAL = {Stochastic Processes and their Applications},
    VOLUME = {162},
      YEAR = {2023},
     PAGES = {456--480},
      ISSN = {0304-4149,1879-209X},
   MRCLASS = {60B20 (60F05 60G55)},
  MRNUMBER = {4594216},
MRREVIEWER = {Ke\ Wang},
       DOI = {10.1016/j.spa.2023.04.020},
       URL = {https://doi.org/10.1016/j.spa.2023.04.020},
}

@article {catalano:fleermann:2024,
    AUTHOR = {Catalano, Riccardo and Fleermann, Michael and Kirsch, Werner},
     TITLE = {Random band and block matrices with correlated entries},
   JOURNAL = {Electron. J. Probab.},
  FJOURNAL = {Electronic Journal of Probability},
    VOLUME = {29},
      YEAR = {2024},
     PAGES = {Paper No. 21, 32},
      ISSN = {1083-6489},
   MRCLASS = {60B20},
  MRNUMBER = {4705168},
       DOI = {10.1214/24-ejp1076},
       URL = {https://doi.org/10.1214/24-ejp1076},
}

@article {fleermann:kirsch:2021,
    AUTHOR = {Fleermann, Michael and Kirsch, Werner and Kriecherbauer,
              Thomas},
     TITLE = {The almost sure semicircle law for random band matrices with
              dependent entries},
   JOURNAL = {Stochastic Process. Appl.},
  FJOURNAL = {Stochastic Processes and their Applications},
    VOLUME = {131},
      YEAR = {2021},
     PAGES = {172--200},
      ISSN = {0304-4149,1879-209X},
   MRCLASS = {60B20},
  MRNUMBER = {4153882},
MRREVIEWER = {Anamaria\ Savu},
       DOI = {10.1016/j.spa.2020.09.004},
       URL = {https://doi.org/10.1016/j.spa.2020.09.004},
}

@article {fleermann:kirsch:2023,
    AUTHOR = {Fleermann, Michael and Kirsch, Werner},
     TITLE = {Proof methods in random matrix theory},
   JOURNAL = {Probab. Surv.},
  FJOURNAL = {Probability Surveys},
    VOLUME = {20},
      YEAR = {2023},
     PAGES = {291--381},
      ISSN = {1549-5787},
   MRCLASS = {60B20},
  MRNUMBER = {4563528},
       DOI = {10.1214/23-ps16},
       URL = {https://doi.org/10.1214/23-ps16},
}

@article{benaych:espana:2026,
      title={Random Matrices and {U}-Statistics}, 
      author={Florent Benaych-Georges and Tomas Espana},
      year={2026},
      eprint={2509.25551},
      archivePrefix={arXiv},
      primaryClass={math.PR},
      url={https://arxiv.org/abs/2509.25551}, 
}

@misc{shevade:bhattacharjee:2026,
      title={Limiting Spectral Distribution of moderately large Kendall's correlation matrix and its application}, 
      author={Raunak Shevade and Monika Bhattacharjee},
      year={2026},
      eprint={2603.08353},
      archivePrefix={arXiv},
      primaryClass={math.ST},
      url={https://arxiv.org/abs/2603.08353}, 
}

@article {bousseyroux:espana:smerlak:2026,
    AUTHOR = {Bousseyroux, Pierre and Espana, Tomas and Smerlak, Matteo},
     TITLE = {Another {M}ar\v cenko-{P}astur law for {K}endall's tau},
   JOURNAL = {Electron. Commun. Probab.},
  FJOURNAL = {Electronic Communications in Probability},
    VOLUME = {31},
      YEAR = {2026},
     PAGES = {Paper No. 2, 10},
      ISSN = {1083-589X},
   MRCLASS = {60B20 (62H20)},
  MRNUMBER = {5013233},
       DOI = {10.1214/25-ECP749},
       URL = {https://doi.org/10.1214/25-ECP749},
}

\end{document}